\documentclass[10pt,a4paper,openany,reqno]{amsart}

\usepackage{amsmath,amsfonts,ams}
\usepackage{latexsym}
\usepackage{amssymb}
\usepackage{color}
\usepackage{epsfig}
\usepackage{graphicx}
\usepackage{subfigure}
\usepackage{mathrsfs}
\usepackage{amscd}
\usepackage{amsthm}
\usepackage{color}
\usepackage{cite}
\usepackage{epstopdf}

\setlength{\textheight}{21cm} \setlength{\textwidth}{14.5cm}
\setlength{\oddsidemargin}{1cm} \setlength{\evensidemargin}{1cm}

\newcommand{\norm}[1]{\| #1\|}




\numberwithin{equation}{section}
\newtheorem{proposition}{Proposition}[section]
\newtheorem{definition}{Definition}[section]
\newtheorem{lemma}{Lemma}[section]
\newtheorem{theorem}{Theorem}[section]

\newtheorem{remark}{Remark}[section]

\let\pa=\partial
\let\al=\alpha



\def\C{\mathop{\bf C\kern 0pt}\nolimits}
\def\DD{\mathop{\bf D\kern 0pt}\nolimits}
\def\K{\mathop{\bf K\kern 0pt}\nolimits}
\def\N{\mathop{\bf N\kern 0pt}\nolimits}
\def\Q{\mathop{\bf Q\kern 0pt}\nolimits}

\newcommand{\la}{\lambda}

\newcommand{\beq}{\begin{equation}}
\newcommand{\eeq}{\end{equation}}
\newcommand{\ben}{\begin{eqnarray}}
\newcommand{\een}{\end{eqnarray}}
\newcommand{\beno}{\begin{eqnarray*}}
\newcommand{\eeno}{\end{eqnarray*}}

 \renewcommand{\theequation}{\arabic{section}.\arabic{equation}}

\makeatletter
\newcommand{\Extend}[5]{\ext@arrow0099{\arrowfill@#1#2#3}{#4}{#5}}
\makeatother

\begin{document}

\title[Defocusing NLS in  dimension two]{Scattering theory   below energy space for   two dimensional nonlinear Schr\"odinger equation}

\author{Changxing Miao}
\address{ Institute of Applied Physics and Computational Mathematics,
        P.O. Box 8009, Beijing 100088, P.R. China.\\
        and Beijing Center for Mathematics and Information
Interdisciplinary Sciences, Beijing, 100048, P.R.China.}

\email{miao\_{}changxing@iapcm.ac.cn}

\author{Jiqiang Zheng}
\address{The Graduate School of China Academy of Engineering Physics, P. O. Box 2101, Beijing, China, 100088}
\email{zhengjiqiang@gmail.com} \maketitle
\begin{abstract}
The purpose of this paper is to illustrate the I-method by studying
low-regularity solutions of the nonlinear Schr\"odinger equation in
two space dimensions. By applying this method, together with the
interaction Morawetz estimate \cite{CGT09,PV},
we establish global well-posedness
and scattering for low-regularity solutions of the equation $iu_t +
\Delta u = \lambda _1|u|^{p_1} u + \lambda _2|u|^{p_2} u$
under certain
assumptions on parameters. This is the first result of this type for
an equation which is not scale-invariant.
In the first step, we establish global well-posedness and scattering for
low regularity solutions of the equation $iu_t + \Delta u = |u|^p u$,
for a suitable range of the exponent $p$ extending the result of Colliander,
Grillakis and Tzirakis [Comm. Pure Appl. Math. 62(2009), 920-968.]
\end{abstract}


\begin{center}
 \begin{minipage}{120mm}
   { \small {\bf Key Words:} nonlinear Schr\"odinger equation;  global well-posedness; scattering; low  regularity.
      {}
   }\\
    { \small {\bf AMS Classification:}
      {35Q55.}
      }
 \end{minipage}
 \end{center}
\section{Introduction}

\noindent We consider the initial-value problem for the defocusing
nonlinear Schr\"odinger equation
\begin{align} \label{equ1.1}
\begin{cases}    (i\partial_t+\Delta)u= f(u),\quad
(t,x)\in\R\times\R^2,
\\
u(0,x)=u_0(x),
\end{cases}
\end{align}
where $u:\R_t\times\R_x^2\to \mathbb C$. If $f(u)=|u|^pu$, the equation in   \eqref{equ1.1}
is invariant under the scaling transform
\begin{equation}\label{scale}
u(t,x)\mapsto \lambda^{2/p}u(\lambda^2t, \lambda x),\quad\text{for any }\; \lambda>0,
\end{equation}
and this  scaling property leads to the notion of \emph{criticality} for
problem \eqref{equ1.1}.  Indeed, one can verify that the homogeneous Sobolev space $\dot{H}^{s_c}(\mathbb R^2)$ with the critical regularity index
$s_c:=1-\frac2p$
  is invariant under scaling
\eqref{scale}.
 Then, for  every  $u_0\in H_x^s(\R^2)$,  we refer to  the problem \eqref{equ1.1} as \emph{critical} if  $s=s_c$,
\emph{subcritical} for  $s>s_c$,   and \emph{supercritical} if  $s<s_c$.

If a smooth solution $u$ of problem \eqref{equ1.1}  has sufficient decay at
infinity, it conserves  mass
\begin{equation}
M(u)=\int_{\mathbb{R}^2}|u(t,x)|^2dx=M(u_0)
\end{equation}
and the energy
\begin{equation}
E(u(t))=\int_{\mathbb{R}^2}\Big(\tfrac12|\nabla
u(t)|^2+F(u)\Big)dx=E(u_0),~F(u)=\int_0^{|u|}f(s)ds.
\end{equation}

The global well-posedness and scattering theory  for the defocusing
Schr\"odinger equation(NLS)
\begin{align} \label{equ1.1.123}
\begin{cases}    (i\partial_t+\Delta)u=|u|^pu,\quad
(t,x)\in\R\times\R^d,
\\
u(0,x)=u_0(x)\in H^s(\R^d),
\end{cases}
\end{align}
has been intensively studied in papers
\cite{Bour98,Cav,CaW,CGT07,CKSTT02,CKSTT0301,CKSTT03,GV85,MS,Na}.
Recall that a global solution $u$ to \eqref{equ1.1.123} scatters in
$H^s(\R^d)$, if there exist unique state $u_{\pm}\in H^{s}_x(\R^d)$ such
that
        $$\lim_{t\to\pm\infty}\norm{u(t)-e^{it\Delta}u_{\pm}}_{H_x^{s}(\R^d)}=0.$$
In the energy-subcritical case, i.e.
for $p\in\big(\tfrac4d,\tfrac4{d-2}\big)$ if $d\geq3$  and for
$p\in\big(\tfrac4d,+\infty\big)$ if $d\in\{1,2\}$, for every $u_0\in H^1(\R^d)$, it
is easy to prove  the global well-posedness for problem \eqref{equ1.1.123}
by combining the Strichartz estimate together with a standard fixed point argument
and the conservation of energy. Ginibre and Velo \cite{GV85} proved
the scattering in spatial dimension $d\geq3$ by making use of the
almost finite propagation speed
$$\int_{|x|\geq a} |u(t,x)|^2 \; dx \leq \int \min\left(\frac{|x|}{a},
1\right) |u(t_0)|^2 \; dx + \frac{C}{a} \cdot |t-t_0|$$ for large
spatial scale and the classical  Morawetz inequality in  \cite{LS}
\begin{equation}\label{cmrsz}
\iint_{\R\times \R^d}\frac{|u(t,x)|^{p+2}}{|x|} \; dtdx \lesssim
\|u\|_{L_t^\infty\dot H^\frac12}^2\lesssim C\big(M(u_0), E(u_0)\big)
\end{equation} for small spatial scale. It is well known that the  Morawetz estimate is an essential tool in the proof of scattering
for the nonlinear dispersive equations such as nonlinear Schr\"{o}dinger equations and nonlinear Klein-Gordon equations.
  A classical Morawetz
inequality was first derived by Morawetz \cite{M} for the nonlinear  Klein-Gordon
equation,  and then extended by Lin and Strauss \cite{LS}
 to the nonlinear Schr\"odinger equation with $d\geq3$ in order to obtain the
scattering for slightly more regular solutions. Next, Nakanishi
\cite{Na} extended the above Morawetz inequality to the dimensions
$d\in\{1,2\}$ by considering certain variants of the Morawetz estimate with
space-time weights and consequently he proved the scattering in low dimensions.

The Morawetz estimate \eqref{cmrsz} plays an important role in
the proof of scattering for the problem \eqref{equ1.1.123} in the
energy-subcritical case, but it does not work so powerfully in the
energy-critical case (i.e.  for $p=\tfrac4{d-2}$ if $d\geq3$). Thus,  to
obtain the scattering in the critical case, it is a very difficult
problem. An  essential  breakthrough came from Bourgain
\cite{Bo99a} who exploited the `induction on energy' technique
and the following spatial-localized  Morawetz inequality
\begin{equation}\label{CKSTT08}
\int_I\int_{|x|\leq C|I|^\frac12}\frac{|u(t,x)|^6}{|x|} \; dtdx
\lesssim |I|^\frac12C\big(\|u\|_{L_t^\infty \dot
H^1(I\times\R^3)}\big)
\end{equation}
to obtain the scattering of  radial solutions to problem
\eqref{equ1.1.123} with $p=4$ in the energy space $\dot H^1(\R^3)$. Next,
 Colliander, Keel,
Staffilani, Takaoka and Tao  (I-team) \cite{CKSTT08} removed the
radial symmetry assumption in \cite{Bo99a}, and solved this longtime standing problem
 through the Bourgain `induction on
energy' technique and the frequency localized type of the
interaction Morawetz inequality
\begin{equation}\label{inmwe}
\big\||\nabla|^{\frac{3-d}2}(|u|^2)\big\|_{L_{t,x}^2(\R\times\R^d)}^2\lesssim
\|u_0\|_{L_x^2}^2\|u\|_{L_t^\infty(\R;\dot H^\frac12)}^2,\qquad
d\geq1.
\end{equation}
This interaction Morawetz  inequality was first derived by
I-team in their work \cite{CKSTT04} in spatial dimension $d=3$ and then
extended to $d\geq4$ in \cite{TVZ}.
 Colliander,  Grillakis and  Tzirakis\cite{CGT09}, Planchon and Vega \cite{PV}
independently proved \eqref{inmwe} in dimensions $d\in\{1,2\}.$ As a
byproduct, one can easily give another simpler proof of  the result
of  Ginibre and Velo \cite{GV85}, see \cite{TVZ} for more detail. We
also refer the reader to \cite{GV2010} for the exposition on the
Morawetz inequalities  and their applications.

 The
interaction Morawetz inequality plays also an important role in the study of a low
regularity problem. Where we ask what is the minimal $s$  to ensure that problem \eqref{equ1.1.123} has either  a local solution  or
a  global solution for which the scattering hold? Such a problem was first considered by Cazenave and
Weissler \cite{CaW}, who proved that problem \eqref{equ1.1.123} is
locally well posed  in $H^s(\R^d)$ with $s\geq\max\{0,s_c\}$ and globally well posed together with  scattering for small data in $\dot H^{s_c}(\R^d)$ with $s_c\ge 0$.
 They used Strichartz estimates in the framework of Besov spaces.
On the other hand, since the
lifespan of local solutions depend only on the $H^s$-norm of the
initial data for $s>\max\{0,s_c\}$, one can easily obtain the global
well-posedness for \eqref{equ1.1.123} in two special cases: the mass
subcritical case ($p<\tfrac4d$) for $L^2_x(\R^d)$-initial data and
the energy-subcritical case (for $p<\tfrac4{d-2}$,if $d\geq3$ or
for $p<+\infty$ if $d\in\{1,2\}$) for $H^1_x(\R^d)$-initial data by using  the
conservation of mass and energy respectively.

This leaves  the open  problem on  global well-posedness in
$H^s(\R^d)$ in the intermediate regime $0\leq s_c\leq s<1$. The
first progress on this direction came from the Bourgain `Fourier
truncation method \cite{Bour98} where
refinements of Strichartz' inequality \cite{Bourimrn}, high-low
frequency decompositions and perturbation methods were used to show that
problem \eqref{equ1.1.123} with $p=2$ is globally wellposed in
$H^s(\R^3)$ with $s>\frac{11}{13}$ such that
\begin{equation}
u(t)-e^{it\Delta}u_0\in H^1(\R^3).
\end{equation}
This leads to  the I-method which was derived by
Keel and Tao in the study of wave maps \cite{KT98IMRN}.
Subsequently, I-team  developed the I-method
 to treat many low regularity problems including the
nonlinear Schr\"odinger equations with derivatives, the one dimensional quintic
NLS, and the cubic NLS in two and three
dimensions\cite{CKSTT01,CKSTT02,CKSTT0201,CKSTT0301,CKSTT03,CKSTT07}.
Compared with the result in \cite{Bour98}, I-team
 also obtained the scattering in $H^s(\R^3)$ with
$s>\tfrac56$ by using the I-method and the interaction Morawetz
estimate \eqref{inmwe} in \cite{CKSTT04}.   Dodson \cite{Dod13} extended those results to
$s>\tfrac57$ by means of a linear-nonlinear decomposition, and then Su
\cite{Su}  to $s>\tfrac23$. For the cubic NLS in
dimension two (corresponding to the mass-critical), I-term further
exploited the improved I-method in \cite{CKSTT07} to get the global well-posedness
for $s>\tfrac12$. Colliander, Grillakis and
Tzirakis \cite{CGT07} extended it to $s>\frac25$  by means of the
I-method and the improved interaction Morawetz inequalities.
Laterly, Colliander and Roy \cite{CR08} improved these results to
$s>\tfrac13$.   Subsequently, Dodson \cite{Dod} showed the global
well-posedness for $s>\tfrac14$ by improving the almost Morawetz
estimates from \cite{CGT07}.

The study of a low regularity problem stimulates the development of the
scattering in $L^2(\R^d)$ for the mass-critical problem (i.e. for
$p=\tfrac4d$). Dodson\cite{Dodson3,Dodson2,Dodson1} developed so called  long-time Strichartz estimates
to prove  the
global well-posedness and scattering in $L_x^2$-space   by making use
of a concentration-compactness approach  and  the idea of I-method.

\vskip 0.2cm

Now, let us describe   the I-method, which consists in  smoothing out the $H^s$-initial data with $0<s<1$ in order
to access a good local and global theory available at the $H^1$-regularity. To do it, we define the Fourier multiplier $I$ by
$$\widehat{Iu}(\xi):=m(\xi)\hat{u}(\xi),$$ where $m(\xi)$ is a
smooth radial decreasing cut off function such that
\begin{equation}\label{mxidy}
m(\xi)=\begin{cases} 1, ~~  \qquad\qquad|\xi|\leq N,\\
\Big(\frac{|\xi|}{N}\Big)^{s-1},\quad |\xi|\geq2N.
\end{cases}
\end{equation}
Thus, $I$ is the identity operator on frequencies $|\xi|\leq N$ and
behaves like a fractional integral operator of order $1-s$ on higher
frequencies. It is easy to show  that the operator $I$ maps $H^s$ to
$H^1$. Moreover, we have
\begin{equation}\label{equcont}
 \|u\|_{H^s}\lesssim\|Iu\|_{H^1}\lesssim
N^{1-s}\|u\|_{H^s}.
\end{equation}
Thus, to prove that problem  \eqref{equ1.1.123} is globally well-posed in
$H^s(\R^d)$, it suffices to show that $E(Iu(t))<+\infty$ for all
$t\in\R$. Since $Iu$ is not a solution to \eqref{equ1.1.123},
the modified energy $E(Iu)(t)$ is not
conserved.
Indeed, we have
\begin{equation}\label{mdfeninc}
\frac{d}{dt}E\big(Iu(t)\big)={\rm
Re}\int_{\R^d}\overline{Iu_t}\Big[|Iu|^pIu-I(|u|^pu)\Big]dx.\end{equation}
Thus,  the key idea is to show that the
modified energy $E(Iu)$ is an `almost conserved' quantity in the
sense that its derivative $\tfrac{d}{dt}E\big(Iu(t)\big)$ will decay
with respect to a large parameter $N$. This will allow us to control
$E(Iu)$ on time interval where the local solution exists
and we can iterate this estimate to obtain a global in time control  of the solution
by means of  the bootstrap argument, see Section 3 for more details.
Then immediately we get a bound for the $H^1$-norm of $Iu$ which
will give us an $H^s$-bound for the solution $u$ by inequality
\eqref{equcont}.

To deal with equality \eqref{mdfeninc}, one needs complicated estimates
on the commutator $I(|u|^pu)-|Iu|^pIu$. When $p$ is
an even integer, one can write the commutator explicitly by means of the
Fourier transform and to control it by multilinear harmonic analysis,
see \cite{CGT07,CGT09,CHVZ,CKSTT07,CR08,Dod,Dod13,Dod14,Su,Tz} for
considerations of the algebraic nonlinearity $f(u)=|u|^{2k}u$ with
$k\in\mathbb{N}$ in $\mathbb R^d$($d=1,2$) and cubic NLS in  $\mathbb R^3$.
 Colliander, Grillakis and Tzirakis \cite{CGT09}
proved that a solution to \eqref{equ1.1.123} with
$f(u)=|u|^{2k}u$  is global and scatters for $s>1-\tfrac1{4k-3}$ in
 $\mathbb R^2$. Recently, by exploiting the long-time
Strichartz estimate in the Koch-Tataru space $U_\Delta^2$ and
$V_\Delta^2$ (see \cite{KT07,KT12}), Dodson \cite{Dod14} extended
this result to $s>1-\tfrac1k$ for radial initial data.

%

\vskip 0.2cm

Unfortunately, the above method for estimating \eqref{mdfeninc}
depends heavily on the exact form of the nonlinearity. Therefore,
this method fails when $p$ is not an even integer. For arbitrary
$p\in(0,4/(d-2))$ and $d\geq3$, by relying on more rudimentary tools
as Taylor's expansion and Strichartz estimates, I-team
\cite{CKSTT03} obtained polynomial growth of the $H^s$-norm of
solutions, and so the global well-posedness for problem \eqref{equ1.1.123}
with $s$ sufficiently close to $1$. However, their bounds are
insufficient to yield scattering. Subsequently, Visan and Zhang
\cite{VZ} combined the I-method and the a priori interaction
Morawetz estimate \eqref{inmwe} to show that scattering holds in
$H^s(\R^d)(d\geq3)$ for $s$ being larger than some
$s_0(d,p)\in(0,1)$. This method is weaker than the multilinear
multiplier method when $p$ is an even integer.

\vskip0.2cm I-method also relies on the scale-invariance
of the equation in \eqref{equ1.1.123}. Therefore, adding a
perturbation to the equation which destroys the scale invariance, is
of particular interest. By this reason, we  study
the nonlinear Schr\"{o}dinger equation \eqref{equ1.1.123}
 which is perturbed by a  lower-order
nonlinearity
\begin{align} \label{equ1.1.1c}
\begin{cases}    (i\partial_t+\Delta)u=|u|^{p_1}u+|u|^{p_2}u,\quad
(t,x)\in\R\times\R^2,~p_1<p_2,
\\
u(0,x)=u_0(x).
\end{cases}
\end{align}
 We look for answers to the
following questions: under which conditions on
 $p_1$ and $p_2$ a solution to problem \eqref{equ1.1.1c} is unique global
 in time  in $H^s(\R^2)$ with suitable $s$, and is scattering?  We use a certain perturbative and scale technique.
We first remove the term $|u|^{p_2}u$ and study the global
well-posedness and scattering for \eqref{equ1.1} with general
nonlinearity  $f(u)=|u|^pu$
by arguments of \cite{VZ} combined with the a priori
interaction Morawetz estimates in \cite{CGT09,PV}. Then, we apply the
I-method to  an  equation derived from that in \eqref{equ1.1.1c} by the  scaling transform \eqref{scale}.

Now, we collect our results into the following theorems. We define
\begin{equation}
s_0:=\max\big\{\tfrac{1+s_c}2,~\tfrac{p}{p+1},\quad s_1\big\},
s_c=1-\tfrac2p,
\end{equation}
and $s_1$ is the positive root of the quadratic equation
$$s^2+2s_cs+s_c^2-4s_c=0.$$

\begin{theorem}\label{theorem}
Assume that $u_0\in H^s(\R^2)$ with $s\in(s_0,1)$,
$p\geq\tfrac{11}{4}$. Then the solution $u$ to \eqref{equ1.1}  with
$f(u)=|u|^pu$ is global and scatters in the sense that there exist
unique $u_{\pm}\in H^{s}_x(\R^2)$ such that
\begin{equation}\label{scatters}
\lim_{t\to\pm\infty}\norm{u(t)-e^{it\Delta}u_{\pm}}_{H_x^{s}(\R^2)}=0.
\end{equation}
\end{theorem}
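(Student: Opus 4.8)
The plan is to follow the strategy of Visan--Zhang \cite{VZ} for $d\geq 3$, but now adapted to $d=2$ by replacing the classical interaction Morawetz estimate with the two-dimensional interaction Morawetz inequality \eqref{inmwe} from \cite{CGT09,PV}. The key point is that \eqref{inmwe} with $d=2$ reads $\||u|^2\|_{L^2_t L^4_x}^2 \lesssim \|u_0\|_{L^2}^2\|u\|_{L^\infty_t \dot H^{1/2}}^2$, equivalently $\|u\|_{L^4_t L^8_x}^4 \lesssim \|u_0\|_{L^2}^2 \|u\|_{L^\infty_t \dot H^{1/2}}^2$. Since the modified solution $Iu$ will have bounded $H^1$ norm (which we aim to prove), $\|Iu\|_{L^\infty_t \dot H^{1/2}}$ is bounded, and because $I$ acts as the identity on low frequencies while $\|u\|_{\dot H^{1/2}} \lesssim \|Iu\|_{\dot H^{1/2}}$ anyway, we will obtain an a priori space-time bound of the form $\|Iu\|_{L^4_{t}L^8_x(\R\times\R^2)}^4 \lesssim C(M(u_0),E(Iu))$. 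This global a priori Strichartz-type bound is what drives the whole argument.

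First I would set up the I-method machinery: define $Iu$ as in \eqref{mxidy}, record the modified energy increment \eqref{mdfeninc}, and establish the local well-posedness theory for $Iu$ at the $H^1$ level on short time intervals, using Strichartz estimates in the admissible pairs for $\R^2$. Second, I would prove the \emph{almost conservation law}: on any interval $J$ on which a suitable local-in-time Strichartz norm of $Iu$ is controlled, one has $\sup_{t\in J}|E(Iu(t)) - E(Iu(t_0))| \lesssim N^{-\beta}(\text{controlled quantities})$ for some power $\beta=\beta(s,p)>0$. Here, because $p$ is not assumed to be an even integer, the commutator $I(|u|^pu) - |Iu|^pIu$ cannot be expanded via the Fourier transform; instead I would use Taylor's formula (writing $|u|^pu = |Iu|^pIu + O(|Iu|^p|u-Iu| + |u-Iu|^{p+1})$ schematically) together with fractional Leibniz, Bernstein, and interpolation inequalities, exactly as in \cite{CKSTT03,VZ}, to bound the right side of \eqref{mdfeninc}. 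Third, I would combine the almost conservation law with the global a priori interaction Morawetz bound: partition $\R$ into $O(N^{\alpha})$ subintervals on each of which the local Strichartz norm of $Iu$ is $O(1)$ (the number of such intervals being controlled precisely by the $L^4_tL^8_x$ bound above), sum the energy increments, and choose parameters so that $N^\alpha \cdot N^{-\beta} \to 0$; this yields $\sup_t E(Iu(t)) \lesssim 1$ uniformly, hence by \eqref{equcont} a global bound $\sup_t\|u(t)\|_{H^s}\lesssim 1$, giving global well-posedness. Finally, scattering follows by upgrading the bound: the uniform control on $E(Iu)$ together with the interaction Morawetz estimate gives a global finite Strichartz norm for $u$ (e.g. $\|u\|_{L^q_{t,x}}<\infty$ for a subcritical pair), from which one obtains existence of the limits $e^{-it\Delta}u(t)\to u_\pm$ in $H^s$ by the standard argument.

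The main obstacle, and the place where the hypotheses $s>s_0$ and $p\geq \tfrac{11}{4}$ enter, is the simultaneous balancing in the last step: the commutator estimate produces a gain $N^{-\beta(s,p)}$ whose exponent degrades as $s\downarrow s_c$ and as $p$ grows (since higher powers $p$ make the Taylor remainder terms harder to control with only $H^1$-level regularity on $Iu$), while the number of subintervals grows like a power $N^{\alpha}$ determined by the interaction Morawetz bound, which scales like $E(Iu)\sim N^{2(1-s)}$. Requiring $\beta(s,p)>\alpha(s,p)$ is precisely what forces $s$ above the three thresholds defining $s_0$: the term $\tfrac{1+s_c}{2}$ comes from the interaction Morawetz/energy scaling, $\tfrac{p}{p+1}$ from balancing the top-order remainder term in the commutator, and $s_1$ (root of $s^2+2s_cs+s_c^2-4s_c=0$) from the most delicate mixed term. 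Carrying out the commutator estimate with sharp enough exponents to get this strict inequality on the stated range, uniformly in $p\geq 11/4$, is the technical heart of the proof; everything else is a now-standard assembly of the I-method with the 2D interaction Morawetz input.
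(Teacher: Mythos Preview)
Your overall architecture is right and matches the paper's: I-method in the style of Visan--Zhang, with the 2D interaction Morawetz estimate replacing the $d\geq 3$ one. Two concrete points, however, diverge from what the paper actually does and would prevent your outline from closing as written.

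First, you omit the scaling step. You write that the number of subintervals scales like $E(Iu)\sim N^{2(1-s)}$, but with $E(Iu_0)$ large the ``controlled quantities'' in your almost-conservation increment are themselves powers of $N^{1-s}$, which swamps the $N^{-\beta}$ gain. The paper instead rescales $u\mapsto u^\lambda(t,x)=\lambda^{-2/p}u(t/\lambda^2,x/\lambda)$ with $\lambda\sim N^{(1-s)/(s-s_c)}$ so that $E(Iu_0^\lambda)\ll 1$; all the almost-conservation propositions are stated and proved under the hypothesis $E(Iu(t_0))\leq 1$. After rescaling, the $L^5_{t,x}$ Morawetz bound on $u^\lambda$ is $O(\lambda^{4s_c/5})$, the number of subintervals is $L\sim\lambda^{4s_c}$, and the requirement $L\cdot N^{s_c-s}\ll 1$ becomes exactly $s^2+2s_cs+s_c^2-4s_c>0$, i.e.\ $s>s_1$. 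The other two thresholds do not come from the Morawetz/energy balancing at all: $s>\tfrac{1+s_c}{2}$ is needed so that $\nu:=2s-s_c-1\in(0,s)$ in the commutator estimate (Lemma~\ref{commutor}), and $s\geq\tfrac{p}{p+1}$ enters only in the high-frequency part of $\|u\|_{L^{4(p+1)}_{t,x}}$ via \eqref{equ2.2} with $\sigma=p/(p+1)$.

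Second, the paper does not work with the $L^4_tL^8_x$ Morawetz norm you propose; it interpolates with $L^\infty_tL^2_x$ to get the $L^5_{t,x}$ bound \eqref{equ2.13} and runs the entire argument with smallness in $L^5_{t,x}$. Remark~\ref{rem3.2} explains why: the low-frequency estimate $\|u_{\leq 1}\|_{L^{2p}_{t,x}}$ by interpolation against $L^\infty_{t,x}$ needs $p\geq 5/2$ with $L^5_{t,x}$ but $p\geq 8/3$ with $L^4_tL^8_x$. More to the point, the constraint $p\geq 11/4$ is a \emph{lower} bound on $p$ (not, as you suggest, a difficulty for large $p$), and it comes from a specific interpolation in the energy-increment estimate: in \eqref{restricti} one needs $4q\geq 5$ with $q=5p/11$, hence $p\geq 11/4$. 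This numerology is tied to the $L^5_{t,x}$ choice, so if you insist on $L^4_tL^8_x$ you will get a different (worse) threshold and will not recover the stated $s_0$.
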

\begin{remark}
There exists a gap for the region $2<p<\tfrac{11}{4}$. The restriction to $p\geq\tfrac{11}{4}$ comes from estimate
\eqref{equ3.1d} in Proposition \ref{zitinc} and estimate \eqref{restricti} in  Proposition \ref{enerin},  since the
classical interaction Morawetz estimates are  not good enough to
control this time-space norm. We refer to Propositions \ref{zitinc} and
\ref{enerin} for more detail.
\end{remark}

Now, we want to deal with the case  $f(u)=|u|^pu$
with $p<\tfrac{11}{4}$. We will apply the following improved
interaction Morawetz estimates in  \cite{CGT07}
\begin{equation}\label{improved}
\int_0^T\int_{\R^2}|u(t,x)|^4dxdt\lesssim
T^\frac13\|u_0\|_{L_x^2}^2\|u\|_{L_t^\infty([0,T],\dot
H^{1/2}_x)}^2+T^\frac13\|u_0\|_{L_x^2}^4
\end{equation}
instead of the following classical interaction Morawetz estimates
in  \cite{CGT09,PV}
\begin{equation}\label{equ2.14rest}
\|u\|_{L_{t,x}^5(I\times\R^2)}\lesssim\|u\|_{L_t^\infty(I;\dot
H^\frac12(\R^2))}^\frac{2}{5}\|u_0\|_2^\frac3{5}.
\end{equation}
 The
estimate \eqref{improved} will help us to obtain global
well-posedness  with the lower order $p$. But  the $H^s$-norm of the
solution depends on the polynomial growth of time, which is
insufficient to yield scattering. Let us define
\begin{equation}
\tilde s_0:=\max\big\{\tfrac{1+s_c}2,~\tfrac{p}{p+1},~ \tilde
s_1\big\}, s_c=1-\tfrac2p,
\end{equation}
and $\tilde s_1$ be the positive root of the quadratic equation
$$3(s-s_c)^2-2(1+6s_c)(1-s)=0.$$
\begin{theorem}\label{theorem1.2}
Assume that $u_0\in H^s(\R^2)$ with $s\in(\tilde s_0,1)$ and $p>2$.
Then the solution $u$ to \eqref{equ1.1}  with $f(u)=|u|^pu$ is
global. Furthermore, we have the polynomial growth of the $H^s$-norm
of the solution,
\begin{equation}
\sup_{t\in[0,T]}\|u(t)\|_{H^s(\R^2)}\leq
C\big(\|u_0\|_{H^s(\R^2)}\big)(1+T)^{\frac{1-s}{3(s-s_c)^2-2(1+6s_c)(1-s)}+},\qquad
\forall~T>0.
\end{equation}
\end{theorem}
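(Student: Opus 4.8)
\textbf{Proof proposal for Theorem \ref{theorem1.2}.}

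The plan is to run the I-method using the \emph{improved} interaction Morawetz estimate \eqref{improved} in place of the classical one \eqref{equ2.14rest}, following the same overall architecture as the proof of Theorem \ref{theorem} but tracking the polynomial-in-$T$ loss that \eqref{improved} introduces. First I would fix a large time $T>0$, rescale the solution so that the modified energy $E(Iu)$ is $O(1)$ at time zero (the scaling parameter $\lambda = \lambda(N,T)$ will be chosen at the end), and establish a local-in-time theory at $H^1$-regularity for $Iu$ on intervals on which a suitable $L_{t,x}$ Strichartz-type norm of $Iu$ is controlled; here $p>2$ guarantees the relevant exponents are admissible and the contraction closes. The two ingredients are then: (i) an \emph{almost-conservation law} bounding the increment of $E(Iu)$ over such an interval by $N^{-\beta}$ times powers of the local norms, obtained by expanding the commutator $I(|u|^pu)-|Iu|^pIu$ via Taylor's formula and Strichartz estimates exactly as in Section 3 / the proof of Theorem \ref{theorem} (this is where the restriction on $s$ relative to $\tfrac{1+s_c}2$ and $\tfrac{p}{p+1}$ enters); and (ii) the a priori spacetime bound coming from \eqref{improved}, which after rescaling contributes a factor $T^{1/3}$ and forces the quadratic balance $3(s-s_c)^2 - 2(1+6s_c)(1-s)$ to appear.

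The core of the argument is the bootstrap. Divide $[0,T]$ into $\sim T \cdot (\text{local step})^{-1}$ subintervals; on each, the local theory plus the a priori $L^4_{t,x}$ control from \eqref{improved} lets one sum the per-step energy increments, so that $E(Iu)$ stays $\lesssim 1$ as long as the number of steps times $N^{-\beta}$ remains small. Converting the number of steps into a power of $N$ and $T$, and demanding that the total increment be $O(1)$, yields a relation of the form $N^{\beta} \gtrsim T^{\gamma}$ with $\gamma$ governed by the cubic/quadratic quantities in the statement; inverting gives $N = N(T)$ growing polynomially in $T$, and then \eqref{equcont} together with the rescaling converts the bound $\|Iu(T)\|_{H^1}\lesssim 1$ into $\|u(T)\|_{H^s}\lesssim C(\|u_0\|_{H^s})(1+T)^{\frac{1-s}{3(s-s_c)^2-2(1+6s_c)(1-s)}+}$. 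The requirement $s>\tilde s_1$ is precisely what makes the denominator $3(s-s_c)^2-2(1+6s_c)(1-s)$ positive, so that the exponent is finite and the scheme is self-consistent; globality is then immediate since the $H^s$-norm cannot blow up in finite time.

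The main obstacle I expect is the almost-conservation estimate (i), i.e.\ bounding $\frac{d}{dt}E(Iu)$ from \eqref{mdfeninc} for a \emph{general} (non-even-integer) exponent $p$: one cannot use the exact Fourier-multiplier expansion of the commutator and must instead run the Taylor-expansion-plus-Strichartz argument of \cite{VZ, CKSTT03}, carefully distributing frequencies between the low-frequency ($\leq N$) and high-frequency pieces of $Iu$ and using bilinear/interaction refinements to harvest the decay $N^{-\beta}$ with $\beta$ as large as possible — the size of $\beta$ directly determines $\tilde s_0$ and the growth exponent. A secondary technical point is that \eqref{improved} controls only the $L^4_{t,x}$ norm (not an $L^5_{t,x}$ norm as in \eqref{equ2.14rest}), so the interpolation/Strichartz bookkeeping that feeds this a priori bound into the nonlinear estimates must be redone with the $L^4$ exponent, which is exactly what shifts the relevant quadratic from the one in Theorem \ref{theorem} to $3(s-s_c)^2-2(1+6s_c)(1-s)=0$. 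Everything else — the local theory, the rescaling, the summation of increments, and the final conversion via \eqref{equcont} — is routine once these two estimates are in hand.
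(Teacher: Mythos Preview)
Your proposal is correct and follows essentially the same route as the paper: replace the $L_{t,x}^5$ smallness hypothesis by an $L_{t,x}^4$ one in the almost-conservation machinery (the analogues of Propositions \ref{zitinc}, \ref{enerin}, \ref{almost}), rescale so that $E(Iu_0^\lambda)\ll 1$, run a bootstrap on the rescaled interval $[0,\lambda^2 T_0]$ using the improved Morawetz estimate \eqref{improved} to count subintervals, and balance $L\cdot N^{s_c-s}\ll 1$ to obtain the quadratic condition $3(s-s_c)^2-2(1+6s_c)(1-s)>0$ and the stated polynomial growth. One small imprecision: the number of subintervals is not ``$\sim T\cdot(\text{local step})^{-1}$'' but rather is dictated by the total $L^4_{t,x}$ norm, namely $L\sim T^{1/3}\lambda^{4s_c}/\eta^4$ (from \eqref{improved} after rescaling); this is exactly what produces the $T^{1/3}$ factor and the specific quadratic in the denominator.
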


%
%

\vskip 0.2cm

Now we turn to problem \eqref{equ1.1.1c} with
$p_2=2k,~k\in\mathbb N$ and $p_1=p$. Denote
$$s_c^{(1)}=1-\tfrac2p,~~s_c^{(2)}=1-\tfrac1k,$$
and
$$\tilde s_3:=\max\Big\{\tfrac{1+s_c^{(1)}}2,\tfrac{2k}{2k+1},\tfrac{5s_c^{(2)}}{4s_c^{(2)}+1},~s_3\Big\}$$
 where $s_3$ is the positive root of the quadratic
equation
$$s^2-(s_c^{(1)}+s_c^{(2)}-\al)s-\al=0,~\al=4s_c^{(2)}-\tfrac{9(2-\frac{p}k)}{2(p+2)}.$$
\begin{theorem}\label{thm1.2}
Assume that $u_0\in H^s(\R^2)$ with
$$s\in(\tilde s_3,1),~2k>p\geq\tfrac{11}{4}~~\text{and}~~1<
k\in\mathbb{N}.$$ Then the solution $u$ to \eqref{equ1.1} with
$f(u)=|u|^{2k}u+|u|^pu$ is global and scatters in $H^s(\R^2)$.
\end{theorem}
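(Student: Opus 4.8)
The plan is to deduce Theorem \ref{thm1.2} from the single-power theory behind Theorem \ref{theorem} together with the algebraic-nonlinearity version of the I-method (as for $|u|^{2k}u$ in \cite{CGT09}), gluing the two by a perturbative use of the scaling \eqref{scale} adapted to the \emph{higher} exponent $p_2=2k$. Fix a large parameter $N$ and set $v(t,x)=\lambda^{1/k}u(\lambda^2t,\lambda x)$ with a small scale $\lambda=\lambda(N)\le1$. A direct computation turns \eqref{equ1.1} with $f(u)=|u|^{2k}u+|u|^pu$ into
\[(i\partial_t+\Delta)v=|v|^{2k}v+\mu\,|v|^{p}v,\qquad \mu=\mu(\lambda)=\lambda^{2-p/k},\]
and since $p<2k$ the exponent $2-p/k$ is positive, so $\mu\to0$ as $\lambda\to0$: the lower-order term becomes a genuinely small perturbation of the (two-dimensionally energy-subcritical) equation $(i\partial_t+\Delta)v=|v|^{2k}v$. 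As in the I-method set-up, one checks $\|\nabla Iv(0)\|_{L^2}\lesssim\lambda^{s-s_c^{(2)}}N^{1-s}\|u_0\|_{\dot H^s}$ and, using $s>\tilde s_3>s_c^{(2)}$ to handle the nonlinear part too, $\lambda$ can be taken comparable to $N^{-(1-s)/(s-s_c^{(2)})}$ so that $E(Iv(0))\le\eps_0$ for a fixed small $\eps_0$; this choice makes $\mu$ a small power of $N$ as well. Mass is only rescaled, $M(v)=\lambda^{-2s_c^{(2)}}M(u)$, and is conserved but large.

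I would then set up the $H^1$-level local theory for the $I$-equation by applying $I$ and using Strichartz estimates plus a contraction (uniformly for $\mu\in(0,1]$, both nonlinearities being energy-subcritical in $d=2$), and track the modified energy. From
\[\tfrac{d}{dt}E(Iv)=\RE\!\int_{\R^2}\overline{Iv_t}\big(|Iv|^{2k}Iv-I(|v|^{2k}v)\big)dx+\mu\,\RE\!\int_{\R^2}\overline{Iv_t}\big(|Iv|^{p}Iv-I(|v|^{p}v)\big)dx,\]
the first commutator (an algebraic nonlinearity) is estimated by the multilinear Fourier/Littlewood--Paley scheme of \cite{CGT09}, sharpened with the interaction-Morawetz input; the second by the Taylor-expansion-plus-Strichartz scheme of \cite{VZ} that already underlies Propositions \ref{zitinc}--\ref{enerin}. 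Integrating over a local interval produces an almost-conservation estimate in which both contributions decay in $N$, the second carrying an additional prefactor $\mu$ that is exactly what pays for the fact that $|v|^{p}v$ does not respect the $2k$-scaling.

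Next, the a priori interaction Morawetz estimate \eqref{inmwe} (equivalently its two-dimensional form \eqref{equ2.14rest}) is available for the defocusing equation with the \emph{sum} of the two nonlinearities — both enter the Morawetz identity with the favorable sign — and yields $\|v\|_{L^5_{t,x}(\R\times\R^2)}\lesssim\|v\|_{L^\infty_t\dot H^{1/2}}^{2/5}\|v(0)\|_{L^2}^{3/5}$; since $\|v\|_{L^\infty_t\dot H^{1/2}}\lesssim\|v\|_{L^\infty_tH^s}\lesssim\|Iv\|_{L^\infty_tH^1}$ and mass is conserved, the right-hand side is controlled by $M(v)$ and $E(Iv)$ and, crucially, is independent of the length of the time interval. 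One then covers the whole (rescaled) time axis by subintervals on each of which $\|v\|_{L^5_{t,x}}$, hence the local Strichartz norm of $Iv$, hence the local theory, is under control; their number is bounded by a power of the Morawetz norm and of the (large) rescaled mass $M(v)=\lambda^{-2s_c^{(2)}}M(u)$ — i.e. by a power of $\lambda^{-1}$. Summing the almost-conservation increments over these intervals and feeding in $\lambda\sim N^{-(1-s)/(s-s_c^{(2)})}$ and $\mu=\lambda^{2-p/k}$, a continuity/bootstrap argument closes $\sup_tE(Iv)(t)\lesssim\eps_0$ \emph{globally} precisely when $s$ exceeds each of $\tfrac{2k}{2k+1}=\tfrac{p_2}{p_2+1}$ and $\tfrac{5s_c^{(2)}}{4s_c^{(2)}+1}$ (local theory and multilinear/Morawetz estimates for the main nonlinearity), $\tfrac{1+s_c^{(1)}}{2}$ (the same input for the perturbation, as in Theorem \ref{theorem}), and $s_3$, the positive root of $s^2-(s_c^{(1)}+s_c^{(2)}-\al)s-\al=0$, $\al=4s_c^{(2)}-\tfrac{9(2-p/k)}{2(p+2)}$ — this last being exactly the balance between the energy-increment decay rate, the power of $\lambda^{-1}$ from the interval count, and the scaling exponent $(1-s)/(s-s_c^{(2)})$, the term $\tfrac{9(2-p/k)}{2(p+2)}$ in $\al$ being the gain afforded by $\mu=\lambda^{2-p/k}$. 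Once $\sup_tE(Iv)\lesssim\eps_0$ is in hand, one gets a uniform-in-time $H^s$ bound and uniform global Strichartz bounds for $v$, hence (undoing the fixed scaling) for $u$, and scattering in $H^s(\R^2)$ follows by the usual Duhamel/Cauchy-in-time argument.

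The hard part, and where the hypothesis $p<2k$ is genuinely used, is the simultaneous handling of two \emph{incompatible} scalings inside the almost-conservation iteration: the I-method is run with the $2k$-adapted scaling, whereas the $|v|^{p}v$-commutator is naturally governed by the $p$-adapted one, the mismatch being measured by $2-p/k>0$. One must verify that the small coefficient $\mu=\lambda^{2-p/k}$ produced by the rescaling offsets this mismatch, so that the perturbative contribution to $\tfrac{d}{dt}E(Iv)$ still decays in $N$ fast enough to be summed against the $\lambda^{-1}$-power from the interval count coming from the Morawetz estimate; the outcome of that balance is precisely $\al$ and the quadratic defining $s_3$. The restriction $p\ge\tfrac{11}{4}$ is inherited through this same $|v|^{p}v$-commutator from the limitation of the classical interaction Morawetz estimate already noted after Theorem \ref{theorem}. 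Everything else reduces, modulo careful but routine Strichartz and Littlewood--Paley bookkeeping, to the single-power theory of Theorem \ref{theorem} and to the algebraic-nonlinearity I-method of \cite{CGT09}.
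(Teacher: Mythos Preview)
Your proposal is correct and follows essentially the same route as the paper: rescale by the scaling adapted to the higher exponent $2k$ so that the $|u|^{p}u$ term acquires a small coefficient $\mu=\lambda^{2-p/k}$, run the I-method on the rescaled equation with the algebraic commutator handled by the multilinear analysis of \cite{CGT09} and the non-algebraic one by the Visan--Zhang scheme underlying Propositions~\ref{zitinc}--\ref{enerin}, feed in the interaction Morawetz estimate (valid for the sum of two defocusing nonlinearities), and close the bootstrap by balancing the interval count $\sim\lambda^{-4s_c^{(2)}}$ against the energy-increment decay; the thresholds $\tfrac{2k}{2k+1}$, $\tfrac{5s_c^{(2)}}{4s_c^{(2)}+1}$, $\tfrac{1+s_c^{(1)}}{2}$ and $s_3$ arise exactly as you describe. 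The only cosmetic difference is that the paper takes $\lambda$ large (with $u^\lambda(t,x)=\lambda^{-1/k}u(t/\lambda^2,x/\lambda)$) rather than your small $\lambda$, which is just the substitution $\lambda\leftrightarrow\lambda^{-1}$; also, when expanding $\overline{Iv_t}$ via the equation the paper tracks two additional cross terms (one $I(|v|^{p}v)$ against the $2k$-commutator and one $I(|v|^{2k}v)$ against the $p$-commutator), but these are indeed routine and covered by your ``careful but routine bookkeeping'' remark.
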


\begin{remark} A simple computation shows $\tilde s_3>s_0$ here. Our
argument also works in the higher dimensional case. By the same way
as in the proof of Theorem \ref{theorem1.2}, one can also use the improved
interaction Morawetz estimates \eqref{improved} to achieve the
global well-posedness of \eqref{equ1.1.1c} with
$p\in\big[2,\tfrac{11}4\big]$.
\end{remark}

\vskip 0.2cm

Finally,  we give the global well-posedness and scattering result
for \eqref{equ1.1} with more general nonlinearity
$f(u)=|u|^{p_1}u+|u|^{p_2}u$ by the same arguments as those in the proofs of
Theorems \ref{theorem} and \ref{thm1.2}.

\begin{theorem}\label{thm1.3}
Assume that $u_0\in H^s(\R^2)$ with
\begin{equation*}
s\in\big(\max\big\{\tfrac{1+s_c^{(2)}}2,~\tfrac{p_2}{p_2+1},~
s_2\big\},~1\big),
s_c^{(j)}=1-\tfrac2{p_j},~j=1,2,~\tfrac{11}{4}\leq p_1<p_2
\end{equation*}
and $s_2$ is the positive root of the quadratic equation
$$s^2+2s_c^{(2)}s+(s_c^{(2)})^2-4s_c^{(2)}=0.$$
 Then the solution of problem \eqref{equ1.1}  with
$f(u)=|u|^{p_1}u+|u|^{p_2}u$ is global and scatters in $H^s(\R^2)$.
\end{theorem}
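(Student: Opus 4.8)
The argument combines the scaling--perturbation device from the proof of Theorem~\ref{thm1.2} with the $I$-method plus classical interaction Morawetz analysis from the proof of Theorem~\ref{theorem}: one treats $|u|^{p_2}u$ as the principal nonlinearity and $|u|^{p_1}u$ as a subordinate term to be damped away by scaling. For $\lambda>0$ set
\begin{equation*}
u^\lambda(t,x):=\lambda^{2/p_2}u(\lambda^2 t,\lambda x),
\end{equation*}
so that $u$ solves \eqref{equ1.1} with $f(u)=|u|^{p_1}u+|u|^{p_2}u$ if and only if $u^\lambda$ solves
\begin{equation*}
(i\partial_t+\Delta)u^\lambda=|u^\lambda|^{p_2}u^\lambda+\lambda^{\,2(p_2-p_1)/p_2}\,|u^\lambda|^{p_1}u^\lambda ,
\end{equation*}
the lower-order term now carrying the prefactor $\lambda^{\,2(p_2-p_1)/p_2}<1$ for $\lambda<1$. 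Since the hypotheses force $s>\tfrac{1+s_c^{(2)}}2>s_c^{(2)}$, one has $\|u^\lambda_0\|_{\dot H^s}=\lambda^{\,s-s_c^{(2)}}\|u_0\|_{\dot H^s}\to 0$ as $\lambda\to0$; combining \eqref{equcont}, the Gagliardo--Nirenberg inequality on $\mathbb R^2$ and the defocusing sign of $F$, one then fixes $\lambda=\lambda(N)=N^{-\gamma}$ with a suitable $\gamma=\gamma(s,p_2)>0$ so small that $E\big(Iu^\lambda_0\big)\le1$, where $E$ is the energy of the scaled equation and $N$ is the (large) $I$-method parameter, fixed last. The price is that $M(u^\lambda_0)=\lambda^{-2s_c^{(2)}}M(u_0)=N^{\,2\gamma s_c^{(2)}}M(u_0)$ grows with $N$; this is the origin of the quantitative restriction below.

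Next one proves an almost-conservation law for $E\big(Iu^\lambda(t)\big)$, as in the proof of Theorem~\ref{theorem}. Differentiating and inserting the scaled equation,
\begin{equation*}
\begin{aligned}
\frac{d}{dt}E\big(Iu^\lambda(t)\big)={}&{\rm Re}\int_{\mathbb R^2}\overline{Iu^\lambda_t}\,\big[\,|Iu^\lambda|^{p_2}Iu^\lambda-I(|u^\lambda|^{p_2}u^\lambda)\,\big]\,dx\\
&+\lambda^{\,2(p_2-p_1)/p_2}\,{\rm Re}\int_{\mathbb R^2}\overline{Iu^\lambda_t}\,\big[\,|Iu^\lambda|^{p_1}Iu^\lambda-I(|u^\lambda|^{p_1}u^\lambda)\,\big]\,dx .
\end{aligned}
\end{equation*}
As $p_2$ need not be an even integer, the first commutator cannot be expanded on the Fourier side, and I would bound it exactly as in Theorem~\ref{theorem}: Taylor expansion of $|Iu^\lambda|^{p_2}Iu^\lambda-I(|u^\lambda|^{p_2}u^\lambda)$, bilinear and interpolated Strichartz estimates, and the local spacetime bounds on $u^\lambda$, giving an increment $\lesssim N^{-\beta}$ on any interval on which the relevant spacetime norm of $u^\lambda$ is small, with $\beta=\beta(s,p_2)>0$. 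The second commutator is of the same type with $p_1$ replacing $p_2$; here $p_1\ge\tfrac{11}4$ is exactly what lets the classical interaction Morawetz estimate \eqref{equ2.14rest} control the associated spacetime norms, as in Theorem~\ref{theorem} and its Remark, and this term also carries the factor $\lambda^{\,2(p_2-p_1)/p_2}\le1$, so it contributes no more than the first.

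The a priori ingredient is the interaction Morawetz inequality: both nonlinearities being defocusing, \eqref{equ2.14rest} holds for the two-term equation with the same proof, so $\|u^\lambda\|_{L^5_{t,x}(I\times\mathbb R^2)}\lesssim\|u^\lambda\|_{L^\infty_t(I;\dot H^{1/2})}^{2/5}\|u^\lambda_0\|_{L^2}^{3/5}$ on every $I$. One then runs a bootstrap on $[0,T^\ast]$: assuming $E\big(Iu^\lambda\big)\le2$, \eqref{equcont} gives $\|u^\lambda\|_{L^\infty_t\dot H^{1/2}}\le\|u^\lambda\|_{L^\infty_tH^s}\lesssim\|Iu^\lambda\|_{L^\infty_tH^1}\lesssim\big(M(u^\lambda_0)+1\big)^{1/2}$, hence a bound on $\|u^\lambda\|_{L^5_{t,x}([0,T^\ast])}$ depending only on the data and on $N$. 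Partitioning $[0,T^\ast]$ into $K$ subintervals on which that norm lies below the smallness threshold --- so $K\lesssim N^{\,5\gamma s_c^{(2)}}$ up to data-factors --- and summing the increments gives $E\big(Iu^\lambda(T^\ast)\big)\le E\big(Iu^\lambda_0\big)+K\,N^{-\beta}\le1+C\,N^{\,5\gamma s_c^{(2)}-\beta}$. If $\beta>5\gamma s_c^{(2)}$ this is $<2$ for $N$ large, so the bootstrap closes for all $T^\ast$ and $E\big(Iu^\lambda\big)$ is uniformly bounded, hence so is $\|u^\lambda\|_{L^\infty_tH^s}$ and, undoing the scaling, $\|u\|_{L^\infty_tH^s}$: global well-posedness. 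Finally, the resulting finite global bound on $\|u^\lambda\|_{L^5_{t,x}(\mathbb R\times\mathbb R^2)}$, inserted into the Strichartz estimates together with persistence of regularity at the $H^s$ level, gives global $H^s$-Strichartz control of $u^\lambda$, so $e^{-it\Delta}u^\lambda(t)$ converges in $H^s$ as $t\to\pm\infty$; rescaling back yields \eqref{scatters}.

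The main obstacle is to secure the inequality $\beta>5\gamma s_c^{(2)}$: the $I$-method gain $\beta(s,p_2)$ must beat the scaling exponent $\gamma(s,p_2)$ --- forced by $E(Iu^\lambda_0)\le1$ to be at least $\tfrac{1-s}{s-s_c^{(2)}}$ and at least the exponent dictated by the Gagliardo--Nirenberg bound on the potential energy --- times the mass-loss factor $5s_c^{(2)}$. Carrying out this optimization is what pushes $s$ above the three thresholds in the statement: $\tfrac{1+s_c^{(2)}}2$ (local theory for the principal power), $\tfrac{p_2}{p_2+1}$ (a commutator constraint), and the positive root $s_2$ of $s^2+2s_c^{(2)}s+(s_c^{(2)})^2-4s_c^{(2)}=0$, the last being the most stringent; as $p_1<p_2$, the analogous thresholds in $p_1$ are automatic, so only the condition $p_1\ge\tfrac{11}4$ need be imposed. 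A secondary difficulty, inherited from Theorem~\ref{theorem} rather than created by the perturbation, is the commutator estimate for the non-algebraic power $p_2$, for which only the crude Taylor-expansion bounds are available; the lower-order term does not worsen it, being damped by $\lambda^{\,2(p_2-p_1)/p_2}$.
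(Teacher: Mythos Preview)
Your overall strategy coincides with what the paper has in mind: rescale by the higher power $p_2$ so that $|u|^{p_2}u$ becomes scale-invariant while $|u|^{p_1}u$ acquires a damping coefficient, then run the $I$-method of Theorem~\ref{theorem} on the rescaled equation, treating both commutators with the rough (Taylor-expansion/Lemma~\ref{commutor}) bounds. Since $s_c^{(1)}<s_c^{(2)}$, the $p_1$ commutator decays like $N^{-(s-s_c^{(1)})}$, strictly faster than the $p_2$ contribution $N^{-(s-s_c^{(2)})}$, and moreover carries the damping prefactor; hence the almost-conservation law reads $E(Iu^\lambda)(t)=E(Iu^\lambda_0)+O(N^{-(s-s_c^{(2)})})$, and the numerology reduces to that of Theorem~\ref{theorem} with $p=p_2$. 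This is also why the three thresholds in the statement depend only on $p_2$, while $p_1\ge\tfrac{11}{4}$ enters solely through the interaction-Morawetz compatibility in Propositions~\ref{zitinc} and~\ref{enerin}.

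There is, however, a quantitative gap in your bootstrap that prevents you from landing on $s_2$. The chain
\[
\|u^\lambda\|_{L^\infty_t\dot H^{1/2}}\le\|u^\lambda\|_{L^\infty_tH^s}\lesssim\|Iu^\lambda\|_{L^\infty_tH^1}\lesssim\big(M(u^\lambda_0)+1\big)^{1/2}
\]
wastes a half-power of the mass: it yields $\|u^\lambda\|_{\dot H^{1/2}}\lesssim M(u^\lambda_0)^{1/2}$, hence $\|u^\lambda\|_{L^5_{t,x}}^5\lesssim M(u^\lambda_0)^{5/2}$ and $K\lesssim N^{5\gamma s_c^{(2)}}$. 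The resulting condition $\beta>5\gamma s_c^{(2)}$ with $\beta=s-s_c^{(2)}$ and $\gamma=(1-s)/(s-s_c^{(2)})$ is $(s-s_c^{(2)})^2>5s_c^{(2)}(1-s)$, whose positive root is strictly larger than $s_2$. To hit the threshold you must interpolate $\dot H^{1/2}$ as in \eqref{equ4.7}--\eqref{equ4.8}: for $|\xi|\le N$ between $L^2$ and $\dot H^1$, and for $|\xi|>N$ between $L^2$ and $\dot H^s$ together with \eqref{equ2.2}. Under the bootstrap hypothesis $\|\nabla Iu^\lambda\|_{L^2}\le1$ this gives $\|u^\lambda\|_{\dot H^{1/2}}\lesssim M(u^\lambda_0)^{1/4}$, so $\|u^\lambda\|_{L^5_{t,x}}^5\lesssim M(u^\lambda_0)^{2}\sim N^{4\gamma s_c^{(2)}}$, and then $\beta>4\gamma s_c^{(2)}$ is precisely $s^2+2s_c^{(2)}s+(s_c^{(2)})^2-4s_c^{(2)}>0$, i.e.\ $s>s_2$. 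With this correction your sketch matches the paper's intended argument.
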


The paper is organized as follows. In Section $2$,  as
preliminaries, we gather some notations and recall the Strichartz
estimate for NLS and some nonlinear estimates. In Section $3$, we will
 prove
Theorem \ref{theorem} by making use of  I-method together with the interaction Morawetz estimates.
 In Section $4$, we will utilize I-method
and the improved interaction Morawetz inequalities to show Theorem
\ref{theorem1.2}. We prove Theorem \ref{thm1.2} in Section $5$ based
on Theorem \ref{theorem}. In Appendix, we state a result in one
dimension.




\section{Preliminaries}
\subsection{Notations}
 To simplify our inequalities, we introduce
the symbols $\lesssim, \thicksim, \ll$. If $X, Y$ are nonnegative
quantities, we write either  $X\lesssim Y $ or $X={\mathcal O}(Y)$ to denote the estimate
$X\leq CY$ for some $C$, and $X \thicksim Y$ to denote the estimate
$X\lesssim Y\lesssim X$. We use $X\ll Y$ to mean $X \leq c Y$ for
some small constant $c$. We use $C\gg1$ to denote various large
finite constants, and $0< c \ll 1$ to denote various small
constants. For every $r$ such that  $1\leq r \leq \infty$, we denote by $\|\cdot
\|_{r}$ the norm in the Lebesgue space  $L^{r}=L^{r}(\mathbb{R}^d)$ and by $r'$ the
conjugate exponent defined by $\frac{1}{r} + \frac{1}{r'}=1$. We
denote  by $a\pm$   quantities of the form $a\pm\epsilon$ for any
$\epsilon>0.$ We always assume $d=2$ and $s<1$.

Let $f(z):=|z|^pz$, then
$$f_z(z):=\tfrac{\pa f}{\pa
z}(z)=\tfrac{p+2}2|z|^p~~~~\text{and}~~~~f_{\bar z}(z):=\tfrac{\pa
f}{\pa \bar z}(z)=\tfrac{p}2|z|^p\tfrac{z}{\bar z}.$$
 We denote $F'$ to be the vector $(f_z,f_{\bar z})$ and use the
 notation
$$w\cdot f'(z)=wf_z(z)+\bar wf_{\bar z}(z).$$
In particular, we get by the chain rule
$$\nabla f(u)=\nabla u\cdot f'(u),$$
and
$$|f'(z)-f'(w)|\lesssim|z-w|\big(|z|+|w|\big)^{p-1},~\quad p>1.$$

The Fourier transform on $\mathbb{R}^2$ is defined by
\begin{equation*}
\aligned \widehat{f}(\xi):= \big( 2\pi
\big)^{-1}\int_{\mathbb{R}^2}e^{- ix\cdot \xi}f(x)dx ,
\endaligned
\end{equation*}
giving rise to the fractional differentiation operators
$|\nabla|^{s}$ and $\langle\nabla\rangle^s$ defined  by
\begin{equation*}
\aligned
\widehat{|\nabla|^sf}(\xi):=|\xi|^s\hat{f}(\xi),~~\widehat{\langle\nabla\rangle^sf}(\xi):=\langle\xi\rangle^s\hat{f}(\xi),
\endaligned
\end{equation*} where $\langle\xi\rangle:=1+|\xi|$.
This helps us to define the homogeneous and inhomogeneous Sobolev
norms
\begin{equation*}
\big\|f\big\|_{\dot{H}^s_x(\R^2)}:= \big\|
|\xi|^s\hat{f}\big\|_{L^2_x(\R^2)},~~\big\|f\big\|_{{H}^s_x(\R^2)}:=
\big\| \langle\xi\rangle^s\hat{f}\big\|_{L^2_x(\R^2)}.
\end{equation*}

We will also need the Littlewood-Paley projection operators. Let
$\varphi(\xi)$ be a smooth bump function adapted to the ball
$|\xi|\leq 2$ which equals 1 on the ball $|\xi|\leq 1$. For each
dyadic number $N\in 2^{\mathbb{Z}}$, we define the Littlewood-Paley
operators
\begin{equation*}
\aligned \widehat{P_{\leq N}f}(\xi)& :=
\varphi\big(\tfrac{\xi}{N}\big)\widehat{f}(\xi), \\
\widehat{P_{> N}f}(\xi)& :=
\Big(1-\varphi\big(\tfrac{\xi}{N}\big)\Big)\widehat{f}(\xi), \\
\widehat{P_{N}f}(\xi)& :=
\Big(\varphi\big(\tfrac{\xi}{N}\big)-\varphi\big(\tfrac{2\xi}{N}\big)\Big)\widehat{f}(\xi).
\endaligned
\end{equation*}
Similarly,  we can define $P_{<N}$, $P_{\geq N}$, and $P_{M<\cdot\leq
N}=P_{\leq N}-P_{\leq M}$, whenever $M$ and $N$ are dyadic numbers.
Especially, we denote $P_1:=P_{\leq1}$. We will frequently write
$f_{\leq N}$ for $P_{\leq N}f$ and similarly for the other
operators.

The Littlewood-Paley operators commute with derivative operators,
the free propagator, and the conjugation operation. They are
self-adjoint and bounded on every space  $L^p_x(\mathbb R^2)$ and $\dot{H}^s_x(\mathbb R^2)$
for $1\leq p\leq \infty$ and $s\geq 0$. Moreover, they also obey the
following
 Bernstein estimates.

\begin{lemma}[Bernstein estimates]   For every  $s\geq 0$, $1\leq p\leq q \leq \infty$, and $N\in\mathbb N$,
we have
\begin{eqnarray*}\label{bernstein}
 \big\| P_{\geq N} f \big\|_{L^p(\R^2)} & \lesssim & N^{-s} \big\|
|\nabla|^{s}P_{\geq N} f \big\|_{L^p(\R^2)}, \\
\big\||\nabla|^s P_{\leq N} f \big\|_{L^p(\R^2)} & \lesssim  & N^{s}
\big\|
P_{\leq N} f \big\|_{L^p(\R^2)},  \\
\big\||\nabla|^{\pm s} P_{N} f \big\|_{L^p(\R^2)} & \thicksim &
N^{\pm s} \big\|
P_{N} f \big\|_{L^p(\R^2)},  \\
\big\| P_{\leq N} f \big\|_{L^q(\R^2)} & \lesssim &
N^{\frac{2}{p}-\frac{2}{q}} \big\|
P_{\leq N} f \big\|_{L^p(\R^2)},  \\
\big\| P_{ N} f \big\|_{L^q(\R^2)} & \lesssim &
N^{\frac{2}{p}-\frac{2}{q}} \big\|P_{ N} f \big\|_{L^p(\R^2)}.
\end{eqnarray*}

 \end{lemma}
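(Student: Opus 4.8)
The plan is to recognize all five inequalities as instances of a single mechanism: each operator appearing on the left is a Fourier multiplier whose symbol is obtained from one fixed, well-behaved profile by the rescaling $\xi \mapsto \xi/N$, so that its convolution kernel rescales in a controlled way and the estimates reduce to Young's convolution inequality. Concretely, if an operator $T$ has symbol $m(\xi) = a(\xi/N)$, then in dimension two its kernel is $K_N(x) = N^2 \check a(Nx)$, whence $\|K_N\|_{L^r(\R^2)} = N^{2(1 - 1/r)}\|\check a\|_{L^r(\R^2)}$ for every $1 \le r \le \infty$. Young's inequality $\|Tf\|_{L^q} \le \|K_N\|_{L^r}\|f\|_{L^p}$ with $1 + \tfrac1q = \tfrac1p + \tfrac1r$ then produces exactly a gain of $N^{2(1 - 1/r)} = N^{2/p - 2/q}$, which is the power appearing in the last two displayed estimates. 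The whole proof is thus a matter of choosing the profile $a$ in each case and checking that $\check a$ lies in the appropriate $L^r$.

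For the two $L^p \to L^q$ bounds I would take $a = \varphi$ for $P_{\le N}$ and $a = \varphi(\cdot) - \varphi(2\,\cdot)$ for $P_N$; both are smooth and compactly supported, hence Schwartz, so $\check a \in L^r(\R^2)$ for every $r$, and the exponent relation above closes the estimate (note that $p \le q$ forces $r \ge 1$, so Young applies). For the derivative estimates on frequency-localized pieces I would absorb the derivative into the symbol: $|\nabla|^s P_N$ has symbol $|\xi|^s(\varphi(\xi/N) - \varphi(2\xi/N)) = N^s\,a(\xi/N)$ with $a(\eta) = |\eta|^s(\varphi(\eta) - \varphi(2\eta))$. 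Since the bump $\varphi(\eta) - \varphi(2\eta)$ is supported in an annulus away from the origin, the factor $|\eta|^{\pm s}$ is smooth there and $a$ is again Schwartz; Young's inequality with $r = 1$ gives the two-sided bound $\||\nabla|^{\pm s}P_N f\|_{L^p} \thicksim N^{\pm s}\|P_N f\|_{L^p}$, the reverse direction coming from running the same argument with $|\eta|^{\mp s}$. The same recipe, with $a(\eta) = |\eta|^s\varphi(\eta)$ supported in a ball, yields $\||\nabla|^s P_{\le N} f\|_{L^p} \lesssim N^s \|P_{\le N}f\|_{L^p}$.

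The one estimate that requires genuine care, and which I expect to be the main obstacle, is the first: $\|P_{\ge N}f\|_{L^p} \lesssim N^{-s}\||\nabla|^s P_{\ge N}f\|_{L^p}$, where the natural symbol $|\xi|^{-s}$ is singular at the origin and does not decay rapidly at infinity, so the profile is not Schwartz. I would handle this by factoring through a fattened projection $\tilde P_{\ge N}$, whose symbol equals $1$ on the frequency support of $P_{\ge N}$, and writing $P_{\ge N}f = \big(|\nabla|^{-s}\tilde P_{\ge N}\big)\,|\nabla|^s P_{\ge N}f$. The operator $|\nabla|^{-s}\tilde P_{\ge N}$ has symbol $N^{-s}b(\xi/N)$ with $b(\eta) = |\eta|^{-s}\big(1 - \tilde\varphi(\eta)\big)$, which is now smooth everywhere because the low-frequency cutoff removes the singularity of $|\eta|^{-s}$. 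The crucial point to verify is that $\check b \in L^1(\R^2)$: near $x = 0$ the worst behavior is $\check b(x) \sim |x|^{s-2}$, which is integrable precisely because $s > 0$, while at spatial infinity the smoothness of $b$ (its derivatives decaying like $|\eta|^{-s-k}$, integrable once $k$ is large) forces faster-than-polynomial decay of $\check b$. With $\check b \in L^1$, Young's inequality with $r = 1$ delivers the gain $N^{-s}$ and completes the proof. Finally, I would record that all implicit constants depend only on the fixed profiles and on norms $\|\check a\|_{L^r}$, hence are uniform in $N$.
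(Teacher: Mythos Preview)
The paper does not prove this lemma; it is stated without proof as a standard background result. Your argument via rescaled Fourier multipliers and Young's convolution inequality is the textbook approach and is essentially correct.

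One point deserves a sentence of care. For $|\nabla|^s P_{\le N}$ you invoke ``the same recipe'' with profile $a(\eta)=|\eta|^s\varphi(\eta)$, but unlike the annular case this $a$ is \emph{not} Schwartz when $s$ is not an even integer, because $|\eta|^s$ fails to be smooth at the origin. What you actually need is only $\check a\in L^1(\R^2)$, and this does hold: since $a$ is compactly supported, $\check a$ is smooth and bounded near $x=0$, while $a\in H^\sigma(\R^2)$ for every $\sigma<s+1$ (the derivatives $\partial^\alpha a$ behave like $|\eta|^{s-|\alpha|}$ near the origin and remain in $L^2_{\mathrm{loc}}(\R^2)$ as long as $|\alpha|<s+1$), so choosing any $\sigma\in(1,s+1)$ gives $\langle x\rangle^\sigma\check a\in L^2$ and hence $\check a\in L^1$ by Cauchy--Schwarz against $\langle x\rangle^{-\sigma}\in L^2(\R^2)$. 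With this clarification your proof goes through; your more careful treatment of the $P_{\ge N}$ case is already in the right spirit for handling such non-Schwartz profiles.
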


 \subsection{Strichartz estimates}\label{sze}
 Let $e^{it\Delta}$ be the free Schr\"odinger propagator given by
    \begin{equation}\label{explicit formula}
    [e^{it\Delta}f](x)=\tfrac{1}{4\pi it}\int_{\R^2} e^{i\vert x-y\vert^2/4t}f(y)\,dy, \quad t\neq 0.
    \end{equation}
 Obviously, it satisfies  the
dispersive estimate
    $$\norm{e^{it\Delta}f}_{L_x^\infty(\R^2)}\lesssim\vert t\vert^{-1}\norm{f}_{L_x^1(\R^2)}, \quad t\neq 0.$$
Interpolating  above inequality with
$\norm{e^{it\Delta}f}_{L_x^2(\R^2)}\equiv \norm{f}_{L_x^2(\R^2)}$
then yields
\begin{equation}\label{dispers}
\big\|e^{it\Delta}f \big\|_{L^q_x(\R^2)} \leq C|t|^{-(1-\frac2{q})}
\|f\|_{L^{q'}_x(\R^2)}, \quad t\neq 0
\end{equation}
for  $2\leq q\leq\infty$. This inequality  implies the
classical Strichartz estimates by the standard $TT^*$ argument,
 which we will state below. First, we
need the following definition.

\begin{definition}[Admissible pairs]\label{def1} A pair of exponents $(q,r)$ is called \emph{Schr\"odinger
admissible} in $\R^2$,  which we  denote by $(q,r)\in \Lambda_{0}$ if
$$2\leq
q,r\leq\infty,~\tfrac{1}{q}+\tfrac1r=\tfrac12,~\text{and}~(q,r)\neq(2,\infty).$$
For a spacetime slab $I\times\R^2$, we define the Strichartz norm
    $$\norm{u}_{S^0(I)}:=\sup\big\{\|u\|_{L_t^{q}L_x^{r}(I\times\R^2)}:(q,r)\in\Lambda_0, ~q\geq2+\epsilon_0\big\},$$
where  $0<\epsilon\ll1_0$. We denote $S^0(I)$ to be the closure of
all test functions under this norm.
\end{definition}

We  now state the standard Strichartz estimates in the form that
we will need later.

\begin{proposition}[Strichartz estimates \cite{GV, KeT98, St}]\label{prop1}
Let $s\geq 0$ and suppose $u:I\times\R^2\to\C$ is a solution to
$(i\partial_t+\Delta)u=\sum\limits_{j=1}^{m}F_{j}$. Then
\begin{equation}\label{strest}
\norm{\vert\nabla\vert^s u}_{S^0(I)}\lesssim\norm{\vert\nabla\vert^s u(t_0)}_{L_x^2(\R^2)}+\sum\limits_{j=1}^{m}
    \big\|\nabla\vert^s
    F_j\big\|_{L_t^{q_j'}L_x^{r_j'}(I\times\R^2)}
\end{equation}
for any admissible pairs $(q_j,r_j)$ and $t_0\in I$.
\end{proposition}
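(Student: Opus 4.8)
The plan is to deduce \eqref{strest} from the two basic Strichartz inequalities for the free evolution and then reassemble them through Duhamel's formula. Since $|\nabla|^s$ is a Fourier multiplier, it commutes with $e^{it\Delta}$ and with integration in $t$, so I would apply it to the Duhamel representation
\[
u(t)=e^{i(t-t_0)\Delta}u(t_0)-i\sum_{j=1}^{m}\int_{t_0}^{t}e^{i(t-\tau)\Delta}F_j(\tau)\,d\tau
\]
and thereby reduce matters to the case $s=0$, i.e. to the homogeneous bound $\|e^{it\Delta}f\|_{L_t^qL_x^r(\mathbb{R}\times\mathbb{R}^2)}\lesssim\|f\|_{L_x^2(\mathbb{R}^2)}$ together with a retarded estimate for $\int_{t_0}^t e^{i(t-\tau)\Delta}F(\tau)\,d\tau$ allowing the admissible pairs on the two sides to differ.

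For the homogeneous estimate I would run the usual $TT^{*}$ argument. Writing $Tf=e^{it\Delta}f$, the operator $TT^{*}$ sends $g$ to $\int_{\mathbb{R}}e^{i(t-\tau)\Delta}g(\tau)\,d\tau$, and by the dispersive bound \eqref{dispers} one has $\|e^{i(t-\tau)\Delta}g(\tau)\|_{L_x^r}\lesssim|t-\tau|^{-(1-2/r)}\|g(\tau)\|_{L_x^{r'}}$. The admissibility relation $\tfrac1q+\tfrac1r=\tfrac12$ gives $1-\tfrac2r=\tfrac2q\in(0,1)$ whenever $2<r<\infty$, so the one-dimensional Hardy--Littlewood--Sobolev inequality applied in the $t$-variable yields $\|TT^{*}g\|_{L_t^qL_x^r}\lesssim\|g\|_{L_t^{q'}L_x^{r'}}$, with a constant that stays bounded as long as $q$ is kept away from $2$; the remaining pair $(q,r)=(\infty,2)$ is just mass conservation. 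By the $TT^{*}$ principle this is equivalent to $\|e^{it\Delta}f\|_{L_t^qL_x^r}\lesssim\|f\|_{L_x^2}$ and to its dual, and taking the supremum over $(q,r)\in\Lambda_0$ with $q\geq 2+\epsilon_0$ gives $\|e^{it\Delta}f\|_{S^0(I)}\lesssim\|f\|_{L_x^2}$.

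For the inhomogeneous term I would first bound the \emph{untruncated} integral: composing the dual homogeneous estimate (mapping $F_j$ to $L_x^2$ after integrating in $\tau$) with the homogeneous estimate gives $\big\|\int_I e^{i(t-\tau)\Delta}F_j(\tau)\,d\tau\big\|_{L_t^qL_x^r}\lesssim\|F_j\|_{L_t^{q_j'}L_x^{r_j'}}$ for any two admissible pairs. Passing from $\int_I$ to the causal truncation $\int_{t_0}^t$ is then done by the Christ--Kiselev lemma, which applies because $q>2\geq q_j'$ holds strictly---this, like the Hardy--Littlewood--Sobolev step above, is exactly where the non-endpoint restriction in Definition \ref{def1} is used. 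Summing over $j=1,\dots,m$, adding the homogeneous contribution, and undoing the reduction $s=0$ by commuting $|\nabla|^s$ back through yields \eqref{strest}. The only real subtlety is the forbidden two-dimensional endpoint $(q,r)=(2,\infty)$: staying away from it is what makes both the Hardy--Littlewood--Sobolev estimate and the Christ--Kiselev lemma available on the clean range of exponents, and it is built into the definition of the $S^0$-norm; the rest is routine.
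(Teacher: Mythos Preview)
Your proof sketch is correct and follows the standard $TT^*$ route to the non-endpoint Strichartz estimates. However, the paper does not actually give a proof of this proposition: it is stated as a known result with citations to \cite{GV, KeT98, St}, and the text immediately moves on. So there is nothing to compare against beyond noting that your argument is precisely the classical one underlying those references---the dispersive estimate \eqref{dispers} fed into $TT^*$ via Hardy--Littlewood--Sobolev for the homogeneous part, plus Christ--Kiselev for the retarded inhomogeneous term, with the non-endpoint restriction $q>2$ (built into Definition~\ref{def1}) making both steps legitimate.
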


\vskip 0.2cm

\subsection{Nonlinear estimate}

For $N>1$, we define the Fourier multiplier $I:=I_N$ given  by
$$\widehat{Iu}(\xi):=m(\xi)\hat{u}(\xi),$$
where $m(\xi)$ is a smooth radial decreasing cut off function by
\eqref{mxidy}. Let us collect basic properties of $I$.

\begin{lemma}[\cite{VZ}]\label{highcont} Let $1<p<\infty$ and $0\leq\sigma\leq
s<1$. Then,
\begin{align}\label{equ2.1}
\|If\|_{L^p}\lesssim&\|f\|_{L^p},\\\label{equ2.2}
\big\||\nabla|^\sigma
P_{>N}f\big\|_{L^p}\lesssim&N^{\sigma-1}\big\|\nabla
If\big\|_{L^p},\\\label{equ2.3}
\|f\|_{H^s}\lesssim\|If\|_{H^1}\lesssim&N^{1-s}\|f\|_{H^s}.
\end{align}
\end{lemma}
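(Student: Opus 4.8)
\textbf{Proof proposal for Lemma \ref{highcont}.}

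The plan is to reduce all three estimates to standard multiplier/Littlewood--Paley facts about $m(\xi)$, working frequency block by frequency block. The key structural observation is that $m$ is smooth, radial, decreasing, equals $1$ for $|\xi|\le N$, and behaves like $(|\xi|/N)^{s-1}$ for $|\xi|\ge 2N$; in particular $m(\xi)\le 1$ everywhere, $m$ is comparable to a constant on each dyadic annulus $|\xi|\sim M$, and $|\xi|^{\sigma} m(\xi)^{-1}$ is, up to constants, bounded by $N^{\sigma-1}|\xi|$ on the range $|\xi|\gtrsim N$. For \eqref{equ2.1}, I would note that $m$ (being smooth, bounded, and with symbol-type decay in each annulus) is an $L^p$-Fourier multiplier for $1<p<\infty$ by the Mikhlin--H\"ormander theorem (or by writing $I=\sum_M m_M(D)P_M$ with uniformly bounded multiplier norms and summing via Littlewood--Paley square-function estimates); this gives $\|If\|_{L^p}\lesssim\|f\|_{L^p}$.

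For \eqref{equ2.2}, I would first dispose of the low piece: for $N<|\xi|\le 2N$ one has $m(\xi)\sim 1$, so on that block $|\nabla|^\sigma P_{N<\cdot\le 2N}f$ and $N^{\sigma-1}\nabla I P_{N<\cdot\le 2N}f\sim N^\sigma P_{N<\cdot\le 2N}f$ are comparable in $L^p$, using Bernstein. For $|\xi|\ge 2N$, write $|\nabla|^\sigma P_{>N}f = T\bigl(\nabla I f\bigr)$ where $T$ is the Fourier multiplier with symbol $\bigl(1-\varphi(\xi/N)\bigr)\,i\xi |\xi|^{\sigma-2}\,m(\xi)^{-1}\cdot(\text{a cutoff to }|\xi|\gtrsim N)$; on $|\xi|\gtrsim N$ the factor $m(\xi)^{-1}=(|\xi|/N)^{1-s}$ and $|\xi|^{\sigma-1}\le N^{\sigma-1}$, and the symbol $\xi|\xi|^{\sigma-2}m(\xi)^{-1}N^{1-\sigma}$ is a Mikhlin multiplier of norm $O(1)$ localized to $|\xi|\gtrsim N$. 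Hence $\|\,|\nabla|^\sigma P_{>N}f\,\|_{L^p}\lesssim N^{\sigma-1}\|\nabla If\|_{L^p}$; combining the two ranges and using $\sigma-1<0$ so that the coefficients $N^{\sigma-1}$, $N^{\sigma}$ are handled uniformly gives the claim. (One must be slightly careful to let the projections in the definition of $T$ absorb the annulus $N\le|\xi|\le 2N$ smoothly; this is routine.)

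Finally, \eqref{equ2.3}: the lower bound $\|f\|_{H^s}\lesssim\|If\|_{H^1}$ follows by splitting $f=P_{\le N}f+P_{>N}f$; on low frequencies $I$ is the identity and $\langle\xi\rangle^s\le\langle\xi\rangle$, while on high frequencies $\langle\xi\rangle^s = \langle\xi\rangle\cdot\langle\xi\rangle^{s-1}\lesssim \langle\xi\rangle\, N^{s-1}\lesssim \langle\xi\rangle\, m(\xi)$ since $m(\xi)\gtrsim (|\xi|/N)^{s-1}\gtrsim N^{s-1}\langle\xi\rangle^{s-1}$ there, giving $\|f\|_{H^s}\lesssim\|If\|_{H^1}$ by Plancherel. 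For the upper bound $\|If\|_{H^1}\lesssim N^{1-s}\|f\|_{H^s}$, again split: on $|\xi|\le N$, $m=1$ and $\langle\xi\rangle = \langle\xi\rangle^s\langle\xi\rangle^{1-s}\le N^{1-s}\langle\xi\rangle^s$; on $|\xi|\ge N$, $\langle\xi\rangle m(\xi)\sim |\xi|\cdot(|\xi|/N)^{s-1} = N^{1-s}|\xi|^s\lesssim N^{1-s}\langle\xi\rangle^s$. In both regions $\langle\xi\rangle m(\xi)\lesssim N^{1-s}\langle\xi\rangle^s$, so Plancherel finishes it.

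The only genuine subtlety, and the step I would be most careful with, is \eqref{equ2.2}: one needs $L^p$ (not merely $L^2$) boundedness, so Plancherel is unavailable and one must genuinely invoke a Mikhlin-type multiplier theorem for the symbol $\xi|\xi|^{\sigma-2}m(\xi)^{-1}$ restricted to $|\xi|\gtrsim N$, checking that it satisfies the derivative bounds $|\partial^\alpha \text{(symbol)}|\lesssim |\xi|^{-|\alpha|}$ uniformly in $N$ after the $N^{\sigma-1}$ normalization — this uses that $m$ is \emph{smooth} and its logarithmic derivative is $O(|\xi|^{-1})$. Everything else is bookkeeping with dyadic pieces and Bernstein.
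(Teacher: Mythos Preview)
The paper does not prove this lemma; it is simply cited from \cite{VZ}, so there is no in-paper argument to compare against. Your proposal is correct and follows the standard route one expects: Mikhlin--H\"ormander for \eqref{equ2.1}, a frequency-localized multiplier argument (exploiting $|\xi|^{\sigma}m(\xi)^{-1}\lesssim N^{\sigma-1}|\xi|$ on $|\xi|\gtrsim N$ together with the requisite symbol derivative bounds) for \eqref{equ2.2}, and a direct Plancherel computation with the pointwise inequality $\langle\xi\rangle m(\xi)\lesssim N^{1-s}\langle\xi\rangle^{s}$ for \eqref{equ2.3}. The only place to be careful, as you note, is \eqref{equ2.2} in $L^p$ for $p\neq2$: the Mikhlin check on the composite symbol $N^{1-\sigma}\,\xi\,|\xi|^{\sigma-2}m(\xi)^{-1}\mathbf{1}_{|\xi|\gtrsim N}$ does go through uniformly in $N$ because $m$ is smooth with $|\partial^{\alpha}m(\xi)|\lesssim m(\xi)|\xi|^{-|\alpha|}$, but you should make sure your treatment of the transition annulus $N\le|\xi|\le2N$ is folded into a single smooth multiplier rather than handled by a sharp cutoff, since the separate Bernstein argument you sketch for that annulus controls $\|\,|\nabla|^{\sigma}P_{N<\cdot\le2N}f\,\|_{L^p}$ by $N^{\sigma-1}\|\nabla I P_{N<\cdot\le2N}f\|_{L^p}$ and you then need $L^p$-boundedness of $P_{N<\cdot\le2N}$ to pass to $\|\nabla If\|_{L^p}$ --- which is fine, but worth stating.
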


We will also need the following fractional calculus estimates.

\begin{lemma}[\cite{CW}]\label{fensc}
$(i)$ $($Fractional product rule$)$ Let $s\geq0$, and
$1<r,r_j,q_j<\infty$ satisfy $\frac1r=\frac1{r_i}+\frac1{q_i}$ for
$i=1,2$. Then
\begin{equation}\label{moser}
\big\||\nabla|^s(fg)\big\|_{L_x^r(\R^2)}\lesssim\|f\|_{{L_x^{r_1}(\R^2)}}\big\||\nabla|^sg
\big\|_{{L_x^{q_1}(\R^2)}}+\big\||\nabla|^sf\big\|_{{L_x^{r_2}(\R^2)}}\|g\|_{{L_x^{q_2}(\R^2)}}.\end{equation}

$(ii)$ $($Fractional chain rule$)$ Let $G\in
C^1(\mathbb{C}),~s\in(0,1],$ and $1<r,r_1,r_2<+\infty$ satisfy
$\frac1r=\frac1{r_1}+\frac1{r_2}.$ Then
\begin{equation}\label{fraclsfz}
\big\||\nabla|^s
G(u)\big\|_r\lesssim\|G'(u)\|_{r_1}\big\||\nabla|^su\big\|_{r_2}.
\end{equation}
\end{lemma}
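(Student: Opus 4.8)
The two inequalities are classical fractional-calculus estimates of Christ--Weinstein type, and I would establish them by standard Littlewood--Paley/paraproduct techniques, using only tools already at hand: the projections $P_N$, the Bernstein inequalities of the preceding Lemma, the square-function characterization of $L^r$ for $1<r<\infty$, and the Fefferman--Stein vector-valued maximal inequality on $\R^2$. Throughout, $M$ denotes the Hardy--Littlewood maximal operator, which is bounded on every $L^r(\R^2)$ with $1<r<\infty$.

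\emph{Part (i), the fractional product rule.} I would first decompose the product by Bony's paraproduct formula, writing $fg$ as a high--low piece $\sum_N (P_N f)(P_{\leq N/8}g)$, a symmetric low--high piece $\sum_N (P_{\leq N/8}f)(P_N g)$, and a high--high (diagonal) piece $\sum_{M\sim N}(P_M f)(P_N g)$. In the high--low piece the output frequency is $\sim N$, so by the third Bernstein estimate $|\nabla|^s$ acts like multiplication by $N^s$; I would transfer this $N^s$ onto the high factor (turning $\|P_N f\|$ into $\sim \||\nabla|^s P_N f\|$), dominate the low factor $P_{\leq N/8}g$ pointwise and uniformly in $N$ by $Mg$, and then sum in $N$ via the square-function characterization, obtaining $\big\|(\sum_N N^{2s}|P_N f|^2)^{1/2}\,Mg\big\|_{L^r}$. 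Hölder with $\tfrac1r=\tfrac1{r_2}+\tfrac1{q_2}$ and the $L^{q_2}$-boundedness of $M$ yield the term $\||\nabla|^s f\|_{L^{r_2}}\|g\|_{L^{q_2}}$; the symmetric piece gives $\|f\|_{L^{r_1}}\||\nabla|^s g\|_{L^{q_1}}$. The diagonal piece is where $s\ge0$ is used: the output frequency may be $\ll N$, but since $s\ge0$ one may still bound $|\nabla|^s$ of a piece of frequency $\lesssim N$ by $N^s\sim M^s$ and place the whole derivative on one factor, after which the same square-function/maximal-function summation closes the estimate. When $s=0$ the statement is just Hölder.

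\emph{Part (ii), the fractional chain rule.} For $s=1$ this is immediate from $\nabla G(u)=G'(u)\nabla u$ together with Hölder. For $0<s<1$ I would use the difference (Strichartz--Stein) characterization $\||\nabla|^s w\|_{L^r}\sim \big\|\big(\int_{\R^2}|w(\cdot)-w(\cdot-y)|^2|y|^{-2-2s}\,dy\big)^{1/2}\big\|_{L^r}$, valid for $1<r<\infty$, and apply the mean value theorem to $G$, writing $G(u(x))-G(u(x-y))=\big(\int_0^1 G'\big(u(x-y)+t(u(x)-u(x-y))\big)\,dt\big)\,(u(x)-u(x-y))$. Substituting this into the square function separates an averaged-$G'$ factor from the difference quotient of $u$; Hölder with $\tfrac1r=\tfrac1{r_1}+\tfrac1{r_2}$ then places the averaged-$G'$ factor in $L^{r_1}$ and reassembles the $u$-differences into $\||\nabla|^s u\|_{L^{r_2}}$ via the same characterization applied to $u$.

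The main obstacle is precisely the averaged-derivative factor in part (ii): because we assume only $G\in C^1$ and $s\le1$, no Taylor expansion beyond first order is available, so the quantity $\int_0^1\big|G'(u(x-y)+t(u(x)-u(x-y)))\big|\,dt$ must be dominated, uniformly in $y$ and in a form compatible with the $L^{r_1}$ norm, by a pointwise quantity built from $G'(u)$ itself---for instance by a maximal-type bound $\lesssim M(|G'(u)|)(x)+M(|G'(u)|)(x-y)$---so that $L^{r_1}$-boundedness of $M$ returns $\|G'(u)\|_{L^{r_1}}$. Controlling the value of $G'$ along the entire segment joining $u(x-y)$ and $u(x)$ by such endpoint data is the delicate point; once it is in place, Hölder and the square-function equivalence finish the argument.
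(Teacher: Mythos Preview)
The paper does not prove this lemma; it is quoted from Christ--Weinstein \cite{CW} as a known tool, so there is no argument in the paper to compare against. I therefore assess your sketch on its own merits.

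Your treatment of part (i) via Bony's paraproduct decomposition is correct and entirely standard; the three pieces are handled exactly as you describe, and the hypothesis $s\ge0$ is indeed what makes the diagonal (high--high) piece close.

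Part (ii), however, has a genuine gap precisely where you flag it, and the fix you suggest does not work. The pointwise domination
\[
\int_0^1\bigl|G'\bigl((1-t)u(x-y)+tu(x)\bigr)\bigr|\,dt \;\lesssim\; M(|G'\circ u|)(x)+M(|G'\circ u|)(x-y)
\]
is false for general $G\in C^1(\mathbb{C})$. The obstruction is that the straight chord in $\mathbb{C}$ joining $u(x-y)$ to $u(x)$ may leave the range of $u$ altogether, and then $G'$ along that chord bears no relation to $G'\circ u$ at any spatial point. Concretely: take $u$ with values on the unit circle (a smooth compactly supported modification of $x\mapsto e^{ix_1}$) and choose $G\in C^1(\mathbb{C})$ whose real gradient vanishes on $\{|w|=1\}$ but is large near $\{|w|=\tfrac12\}$. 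Then $G'\circ u\equiv0$, so both maximal functions on the right vanish identically; yet for points with $u(x)\approx1$ and $u(x-y)\approx-1$ the chord passes through $|w|=\tfrac12$ and the left side is strictly positive. Note that the lemma itself is not contradicted by this example---$G\circ u$ is essentially constant, so $|\nabla|^s(G\circ u)\approx0$---what fails is your intermediate step: once you replace $|G(u(x))-G(u(x-y))|$ by the looser FTC upper bound $|u(x)-u(x-y)|\int_0^1|G'(\cdots)|\,dt$, you have discarded exactly the cancellation that makes the estimate true, and no maximal-function bound on the integrand can recover it.

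The moral is that the straight-line mean value theorem in the target $\mathbb{C}$ is too crude under the bare hypothesis $G\in C^1$. The argument in \cite{CW} organizes the linearization differently, through a Littlewood--Paley decomposition on the domain side, so that $G'$ is only evaluated at points in (or at averages of) the range of $u$ rather than at arbitrary chord points. If you want to complete part (ii), that is the paper to consult.
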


As noted in the introduction, one needs to estimate the commutator
 $|Iu|^pIu-I(|u|^pu)$ in the increment of modified
energy $E(Iu)(t)$. When $p$ is an even integer, one can use
multilinear analysis to  expand this commutator into a product of
Fourier transforms of $u$ and $Iu$,  and carefully measure
frequency interactions to derive an estimate (see for example
\cite{CGT09}). However, this is not possible when $p$ in not an even
integer. Instead, Visan and Zhang in \cite{VZ} established the following
rougher  estimate:

\begin{lemma}[\cite{VZ}]\label{commutor}
Let $1<r,r_1,r_2<\infty$ be such that
$\frac1r=\frac1{r_1}+\frac1{r_2}$ and let $0<\nu<s.$ Then,
\begin{equation}\label{equ2.4}
\big\|I(fg)-(If)g\big\|_{L^r}\lesssim
N^{-(1-s+\nu)}\|If\|_{L^{r_1}}\big\|\langle\nabla\rangle^{1-s+\nu}g\big\|_{L^{r_2}}.
\end{equation}
Furthermore, we have
\begin{align}\label{equ2.5} \big\|\nabla
If(u)-(I\nabla u)f'(u)\big\|_{L^r}\lesssim&N^{-(1-s+\nu)}\|\nabla
Iu\|_{L^{r_1}}\big\|\langle\nabla\rangle^{1-s+\nu}f'(u)\big\|_{L^{r_2}},
\end{align}
and
\begin{align}\label{equ2.6} \big\|\nabla If(u)\big\|_{L^r}\lesssim\|\nabla
Iu\|_{L^{r_1}}\|f'(u)\|_{L^{r_2}}+N^{-1+s-\nu}\|\nabla
Iu\|_{L^{r_1}}\|\langle\nabla\rangle^{1-s+\nu}f'(u)\|_{L^{r_2}}.
\end{align}
\end{lemma}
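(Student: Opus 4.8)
The statement to prove is Lemma \ref{commutor} (the Visan-Zhang commutator estimates), consisting of \eqref{equ2.4}, \eqref{equ2.5}, \eqref{equ2.6}.

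The plan is to prove \eqref{equ2.4} first by a Littlewood--Paley decomposition, then deduce \eqref{equ2.5} from it via the chain rule, and finally obtain \eqref{equ2.6} by splitting the product into low and high frequency pieces.

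\textbf{Step 1: Proof of \eqref{equ2.4}.} The key observation is that the commutator $I(fg)-(If)g$ only sees the ``spreading'' of frequencies caused by $I$, so only the interaction where $g$ carries high frequency matters. Decompose $f=\sum_{N_1}f_{N_1}$ and $g=\sum_{N_2}g_{N_2}$ into Littlewood--Paley pieces. For the term $I(f_{N_1}g_{N_2})-(If_{N_1})g_{N_2}$, when $N_2\lesssim N_1$ the output frequency is $\sim N_1$ and the multiplier satisfies $m(\xi)=m(\xi_1)$ up to lower order, so a discrete Coifman--Meyer / mean value estimate on $m$ gives a gain. Precisely, writing the multiplier symbol $m(\xi_1+\xi_2)-m(\xi_1)$ and using that $m$ is smooth with $|\nabla m(\xi)|\lesssim m(\xi)|\xi|^{-1}$, one extracts a factor $\lesssim \frac{|\xi_2|}{|\xi_1|}m(\xi_1)$ which after summing in the Littlewood--Paley pieces and using Bernstein is dominated by $N^{-(1-s+\nu)}\|If_{N_1}\|_{L^{r_1}}\|\langle\nabla\rangle^{1-s+\nu}g_{N_2}\|_{L^{r_2}}$ (the $N^{-(1-s+\nu)}$ comes from comparing $m(N_1)N_1$ against $N_1^{s}$ which is how $m$ decays, combined with $\langle N_2\rangle^{1-s+\nu}$ absorbing the loss). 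When $N_2\gg N_1$ the output is $\sim N_2$ and one estimates $I(f_{N_1}g_{N_2})$ and $(If_{N_1})g_{N_2}$ separately, each bounded by $m(N_2)\|f_{N_1}\|\|g_{N_2}\|$ respectively $m(N_1)\|f_{N_1}\|\|g_{N_2}\|$, and since $m(N_2)\le m(N_1)$ and $m(N_2)\lesssim N^{-(1-s)}\langle N_2\rangle^{1-s}$, the extra $\langle N_2\rangle^{\nu}$ in the norm on $g$ makes the double sum over $N_1,N_2$ converge. One uses H\"older's inequality in the $L^r=L^{r_1}\cdot L^{r_2}$ splitting throughout and \eqref{equ2.1} to pass between $f$ and $If$.

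\textbf{Step 2: Proof of \eqref{equ2.5}.} Write $\nabla If(u)-(I\nabla u)f'(u) = \nabla If(u) - I(\nabla u\cdot f'(u)) + I(\nabla u\cdot f'(u)) - (I\nabla u)\cdot f'(u)$. The first difference vanishes by the chain rule $\nabla f(u)=\nabla u\cdot f'(u)$ recorded in the preliminaries, so only $I(\nabla u\cdot f'(u)) - (I\nabla u)\cdot f'(u)$ remains, and this is exactly of the form $I(fg)-(If)g$ with $f=\nabla u$ and $g=f'(u)$. Applying \eqref{equ2.4} gives \eqref{equ2.5} directly, with $\|I\nabla u\|_{L^{r_1}}=\|\nabla Iu\|_{L^{r_1}}$ since $I$ commutes with $\nabla$.

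\textbf{Step 3: Proof of \eqref{equ2.6}.} Split $\nabla If(u) = (I\nabla u)f'(u) + \big(\nabla If(u)-(I\nabla u)f'(u)\big)$. The first term is bounded by $\|\nabla Iu\|_{L^{r_1}}\|f'(u)\|_{L^{r_2}}$ by H\"older, and the second by the right-hand side of \eqref{equ2.5}, which is the claimed bound.

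\textbf{Main obstacle.} The technical heart is Step 1, specifically the frequency-interaction bookkeeping: one must verify that in the case $N_2\lesssim N_1$ the symbol difference $m(\xi_1+\xi_2)-m(\xi_1)$ genuinely gains a full power $\langle N_2\rangle/N_1$ relative to $m(N_1)$ uniformly in the relevant frequency ranges (this needs the smoothness and monotonicity of $m$ from \eqref{mxidy}, and care near $|\xi|\sim N$ where $m$ transitions), and then that the resulting double sum over dyadic $N_1,N_2$ converges --- which is why the slightly-better-than-$1-s$ exponent $1-s+\nu$ with $0<\nu<s$ is needed rather than $1-s$ itself. The case $N_2\gg N_1$ is softer but still requires the $\langle N_2\rangle^{\nu}$ room to sum. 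Everything else is H\"older's inequality, Bernstein estimates, and the mapping properties of $I$ from Lemma \ref{highcont}.
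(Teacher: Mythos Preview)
The paper does not give its own proof of Lemma~\ref{commutor}; it simply quotes the statement from Visan--Zhang \cite{VZ} and uses it as a black box. So there is no proof in the paper to compare against. Your sketch is the standard paraproduct argument for this type of commutator estimate and is essentially what one finds in \cite{VZ}: decompose in Littlewood--Paley pieces, use the mean value bound $|m(\xi_1+\xi_2)-m(\xi_1)|\lesssim \frac{|\xi_2|}{|\xi_1|}m(\xi_1)$ when $N_2\lesssim N_1$ (noting the commutator vanishes unless $N_1\gtrsim N$), treat the high--low case $N_2\gg N_1$ crudely (again vanishing unless $N_2\gtrsim N$), and use the slack $\nu>0$ to sum the dyadic pieces. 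Steps~2 and~3 are exactly right: \eqref{equ2.5} is \eqref{equ2.4} applied to $f=\nabla u$, $g=f'(u)$ after the chain rule collapses the first difference, and \eqref{equ2.6} is the triangle inequality plus H\"older.

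One point worth tightening in Step~1: when you sum over the dyadic scales you should say explicitly how the $L^r$ norm of the full commutator is controlled by the pieces --- e.g.\ organize by output frequency and invoke the Littlewood--Paley square function characterization of $L^r$ (or almost-orthogonality) so that the double sum $\sum_{N_1,N_2}$ actually closes to $\|If\|_{L^{r_1}}\|\langle\nabla\rangle^{1-s+\nu}g\|_{L^{r_2}}$ rather than an $\ell^1$-sum of dyadic norms. This is routine Coifman--Meyer bookkeeping, but as written your sketch asserts the piecewise bound and then jumps to the conclusion; the $\nu$ is needed precisely to make that jump rigorous.
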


Finally, we conclude this section by recalling the interaction
Morawetz estimate for a solution to problem \eqref{equ1.1}.

\begin{lemma}[Interaction Morawetz estimates
\cite{CGT09}\cite{PV}]\label{lemm2.7}
 Let $u$ be an $H^\frac12$-solution to
\eqref{equ1.1} on the spacetime slab $I\times\R^2$. Then, for any
$t_0\in I$, we have
\begin{equation}\label{equ2.12}
\|u\|_{L_t^4L_x^8(I\times\R^2)}^4\lesssim\|u\|_{L_t^\infty(I;\dot
H^\frac12(\R^2))}^2\|u(t_0)\|_2^2.
\end{equation}
Moreover, interpolating with $\|u\|_{L_t^\infty L_x^2}$, we obtain
\begin{equation}\label{equ2.13}
\|u\|_{L_{t,x}^5(I\times\R^2)}^5\lesssim\|u\|_{L_t^\infty(I;\dot
H^\frac12(\R^2))}^2\|u(t_0)\|_2^3.
\end{equation}

\end{lemma}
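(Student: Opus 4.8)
The plan is to derive both \eqref{equ2.12} and \eqref{equ2.13} from the a priori interaction Morawetz inequality \eqref{inmwe} specialized to $d=2$, namely
\begin{equation}\label{im2}
\big\||\nabla|^{1/2}(|u|^2)\big\|_{L^2_{t,x}(I\times\R^2)}^2\lesssim\|u(t_0)\|_{L^2_x}^2\,\|u\|_{L_t^\infty(I;\dot H^{1/2}(\R^2))}^2 ,
\end{equation}
and then to prove \eqref{im2} itself by the interaction Morawetz method of \cite{CGT09,PV}. For the reduction, apply the two-dimensional Sobolev embedding $\dot H^{1/2}(\R^2)\hookrightarrow L^4(\R^2)$ to $|u(t)|^2$: this gives $\|u(t)\|_{L^8_x}^2=\big\||u(t)|^2\big\|_{L^4_x}\lesssim\big\||\nabla|^{1/2}(|u(t)|^2)\big\|_{L^2_x}$; squaring and integrating over $t\in I$ yields $\|u\|_{L_t^4L_x^8(I\times\R^2)}^4\lesssim\big\||\nabla|^{1/2}(|u|^2)\big\|_{L^2_{t,x}(I\times\R^2)}^2$, so \eqref{equ2.12} follows from \eqref{im2}. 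For \eqref{equ2.13}, observe that $(5,5)$ is the space-time interpolant of $(4,8)$ and $(\infty,2)$ with exponents $4/5$ and $1/5$; hence, by H\"older in space-time and conservation of mass $\|u(t)\|_{L^2}\equiv\|u(t_0)\|_{L^2}$,
\begin{equation}
\|u\|_{L_{t,x}^5(I\times\R^2)}^5\lesssim\|u\|_{L_t^4L_x^8(I\times\R^2)}^4\,\|u\|_{L_t^\infty L_x^2(I\times\R^2)}\lesssim\|u\|_{L_t^\infty(I;\dot H^{1/2}(\R^2))}^2\,\|u(t_0)\|_{L^2_x}^3 ,
\end{equation}
which is \eqref{equ2.13}.

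To prove \eqref{im2} I would first reduce, by the usual approximation argument, to smooth well-localized solutions (for which the manipulations below are justified) and pass to the limit using the $H^{1/2}$ local theory. With $a(x):=|x|$, define the Morawetz action centered at $y$,
\begin{equation}
M_y(t):=\int_{\R^2}\nabla a(x-y)\cdot 2\,\mathrm{Im}\big(\bar u(t,x)\nabla u(t,x)\big)\,dx ,
\end{equation}
and the \emph{interaction} Morawetz action $M(t):=\int_{\R^2}|u(t,y)|^2\,M_y(t)\,dy$. Since $|\nabla a|\equiv1$, the quantity $M_y(t)$ is for each $y$ a translation-invariant bilinear form in $u(t)$, and the standard Fourier-side estimate for the weight $x/|x|$ (whose components are, up to constants, Riesz transforms composed with $|\nabla|^{-1}$) gives $|M_y(t)|\lesssim\|u(t)\|_{\dot H^{1/2}}^2$ uniformly in $y$. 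Together with mass conservation this yields
\begin{equation}\label{Muniform}
\sup_{t\in I}|M(t)|\le\sup_{t\in I}\Big(\|u(t)\|_{L^2_x}^2\,\sup_{y}|M_y(t)|\Big)\lesssim\|u(t_0)\|_{L^2_x}^2\,\|u\|_{L_t^\infty(I;\dot H^{1/2}(\R^2))}^2 .
\end{equation}

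Next, differentiate $M(t)$ in time, using the equation \eqref{equ1.1}, the continuity equation $\partial_t|u|^2+\mathrm{div}\big(2\,\mathrm{Im}(\bar u\nabla u)\big)=0$, and the local momentum conservation law; then integrate by parts in $x$ and $y$ and symmetrize in the two variables. The terms carrying the Hessian $\partial_j\partial_k a$ are nonnegative since $\partial_j\partial_k|x|$ is positive semidefinite off the origin; the current--current cross term produced by the evolution of $|u(y)|^2$ reassembles, after symmetrization, into a nonnegative expression; the nonlinear terms are nonnegative because $f$ is defocusing (for $f(u)=|u|^pu$, and likewise for $f(u)=|u|^{p_1}u+|u|^{p_2}u$, the associated pressure has the favorable sign); and the $\Delta^2 a$ term reduces, via the distributional identity $\Delta^2|x|=c\,|x|^{-3}$ in $\R^2$ (the kernel of $|\nabla|$, up to a positive constant), to a positive multiple of $\int_{\R^2}\big||\nabla|^{1/2}(|u(t,x)|^2)\big|^2\,dx$. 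Hence $\tfrac{d}{dt}M(t)\ge c\int_{\R^2}\big||\nabla|^{1/2}(|u|^2)\big|^2\,dx$ for some $c>0$; integrating over $I$ and invoking \eqref{Muniform} gives \eqref{im2}.

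\emph{Main obstacle.} The delicate point is the differentiation of $M(t)$ and the verification that, in dimension two, the linear bulk term is \emph{exactly} a positive multiple of the $\dot H^{1/2}$-norm squared of $|u|^2$, rather than a quantity that degenerates as a naive transplant of the $d\ge3$ computation would suggest. This rests on the two-particle correlation structure, careful integrations by parts, and the identification of $\Delta^2|x|$ with the kernel of $|\nabla|$ in $\R^2$; it is precisely the content of \cite{CGT09,PV}, which we are entitled to quote.
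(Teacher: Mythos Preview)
Your proposal is correct and aligns with the paper's treatment: the paper does not give its own proof of this lemma but simply cites \cite{CGT09,PV} for \eqref{equ2.12} and states (in the lemma itself) that \eqref{equ2.13} follows by interpolating with $\|u\|_{L_t^\infty L_x^2}$. You have supplied exactly that interpolation (with the correct exponents $4/5$ and $1/5$) together with the Sobolev-embedding reduction of \eqref{equ2.12} to \eqref{inmwe}, and then sketched the interaction Morawetz argument behind \eqref{inmwe}, which is precisely the content of the cited references.
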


\begin{remark}
We adopt the  $L_{t,x}^5$ interaction Morawetz norm, but not the
$L_t^4L_x^8$-norm as used in \cite{CGT09}. As we will see in the
next section, one needs  more restriction on  $p$  to use   the $L_t^4L_x^8$ norm  instead of the
$L_{t,x}^5$ norm. We refer
reader to Remark \ref{rem3.2} for more details.
\end{remark}

To treat the case of low power $p$, we  also need the following improved interaction Morawetz
inequalities.

\begin{lemma}[Improved interaction Morawetz estimates
\cite{CGT07}]\label{lemm2.9}
 Let $u$ be an $H^\frac12$-solution to
\eqref{equ1.1} on the spacetime slab $I\times\R^2$. Then, for any
$t_0\in I$
\begin{equation}\label{equ2.15}
\|u(t,x)\|_{L_{t,x}^4(I\times\R^2)}^4\lesssim
T^\frac13\|u\|_{L_t^\infty(I;\dot
H^\frac12(\R^2))}^2\|u(t_0)\|_2^2+T^\frac13\|u(t_0)\|_2^4.
\end{equation}

\end{lemma}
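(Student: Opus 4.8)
The plan is to deduce \eqref{equ2.15} directly from the interaction Morawetz estimate \eqref{equ2.12} of Lemma \ref{lemm2.7} by a short interpolation argument: we trade a power of $|I|$ for the replacement of the mixed norm $L_t^4L_x^8$ by the scaling-critical norm $L_{t,x}^4$. Set $T:=|I|$.

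First, for each fixed $t$ I would interpolate the $L_x^4$ norm between $L_x^2$ and $L_x^8$. Since $\tfrac14=\tfrac13\cdot\tfrac12+\tfrac23\cdot\tfrac18$, H\"older gives $\|u(t)\|_{L_x^4}\le\|u(t)\|_{L_x^2}^{1/3}\|u(t)\|_{L_x^8}^{2/3}$, and by the conservation of mass $\|u(t)\|_{L_x^2}=\|u(t_0)\|_2$, so that
$$\|u(t)\|_{L_x^4}^4\le\|u(t_0)\|_2^{4/3}\,\|u(t)\|_{L_x^8}^{8/3}.$$
Integrating over $I$ and applying H\"older in time with exponents $\tfrac32$ and $3$,
$$\|u\|_{L_{t,x}^4(I\times\R^2)}^4\le\|u(t_0)\|_2^{4/3}\int_I\|u(t)\|_{L_x^8}^{8/3}\,dt\le T^{1/3}\,\|u(t_0)\|_2^{4/3}\,\|u\|_{L_t^4L_x^8(I\times\R^2)}^{8/3}.$$
Now I would insert \eqref{equ2.12}, i.e. $\|u\|_{L_t^4L_x^8}^4\lesssim\|u\|_{L_t^\infty(I;\dot H^{1/2})}^2\|u(t_0)\|_2^2$, raised to the power $\tfrac23$, to obtain
$$\|u\|_{L_{t,x}^4(I\times\R^2)}^4\lesssim T^{1/3}\,\|u\|_{L_t^\infty(I;\dot H^{1/2})}^{4/3}\,\|u(t_0)\|_2^{8/3}.$$
Finally, writing $\|u\|_{L_t^\infty\dot H^{1/2}}^{4/3}\|u(t_0)\|_2^{8/3}=\big(\|u\|_{L_t^\infty\dot H^{1/2}}^2\|u(t_0)\|_2^2\big)^{2/3}\big(\|u(t_0)\|_2^4\big)^{1/3}$ and applying Young's inequality produces the bound $\|u\|_{L_t^\infty\dot H^{1/2}}^2\|u(t_0)\|_2^2+\|u(t_0)\|_2^4$, which is exactly \eqref{equ2.15}.

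There is no genuine obstacle here once Lemma \ref{lemm2.7} is available; the whole point of the statement is to record the (known, from \cite{CGT07}) observation that the $L_t^4L_x^8$ interaction Morawetz bound can be converted into a bound on the scaling-critical quantity $\|u\|_{L_{t,x}^4}^4$ at the negligible cost of a factor $T^{1/3}$ on a fixed time interval. An entirely equivalent route, which is perhaps how \cite{CGT07} phrases it, is to write $\|u\|_{L_{t,x}^4}^4=\||u|^2\|_{L_{t,x}^2}^2$ and interpolate $\||u(t)|^2\|_{L_x^2}$ between $\||u(t)|^2\|_{L_x^1}=\|u(t_0)\|_2^2$ and $\||u(t)|^2\|_{L_x^4}=\|u(t)\|_{L_x^8}^2$ followed by the same H\"older-in-time and Young steps.
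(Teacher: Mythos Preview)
The paper does not supply its own proof of this lemma; it is quoted from \cite{CGT07} without argument. Your derivation is correct: the spatial interpolation $\|u(t)\|_{L_x^4}\le\|u(t)\|_{L_x^2}^{1/3}\|u(t)\|_{L_x^8}^{2/3}$, mass conservation, H\"older in $t$, and then \eqref{equ2.12} give exactly the bound $T^{1/3}\|u\|_{L_t^\infty\dot H^{1/2}}^{4/3}\|u(t_0)\|_2^{8/3}$, and Young's inequality turns this into the stated sum.

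One remark on provenance rather than correctness: your argument is logically \emph{downstream} of Lemma~\ref{lemm2.7}, i.e.\ of the $L_t^4L_x^8$ estimate from \cite{CGT09,PV}. Chronologically, however, \cite{CGT07} predates those references, and the original proof there does not pass through \eqref{equ2.12}; it obtains the $L_{t,x}^4$ bound directly from an interaction Morawetz identity with a different choice of weight, the $T^{1/3}$ factor arising from that computation rather than from a H\"older-in-time step. So while your interpolation route is perfectly valid (and pleasantly short once Lemma~\ref{lemm2.7} is available), it is a genuinely different derivation from the one the paper is citing. Since the present paper already records Lemma~\ref{lemm2.7} as known, your shortcut is entirely appropriate here.
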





\section{Proof of Theorem \ref{theorem}}
In this section, we will use the I-method and the interaction Morawetz
estimate to prove Theorem \ref{theorem}. At the first step, we need
to show that the modified energy $E(Iu)$
\begin{equation}
E(Iu)(t)=\tfrac12\int_{\R^2}|\nabla
Iu(t)|^2+\tfrac1{p+2}\int_{\R^2}|Iu(t)|^{p+2}
\end{equation}
 is an ``almost conserved"
quantity in the sense that its derivative  decays with respect to
$N$. In  the following, we always assume $s<1$.

\subsection{Almost Conservation Law}

The aim of this subsection is to control the growth in time of
$E(Iu)(t)$. First, We define $Z_I(t)$ by
 \begin{equation}\label{equ3.1}
 Z_I(t):=\|Iu\|_{Z(t)}=\sup_{(q,r)\in\Lambda_0}\Big(\sum_{N\geq1}\|\nabla
 P_NIu(t)\|_{L_t^qL_x^r([t_0,t)\times\R^2)}^2\Big)^\frac12
 \end{equation}
with convention that $P_1=P_{\leq1}.$  We have the following control of
$Z_I(t)$.

\begin{proposition}[The control of
$Z_I(t)$]\label{zitinc} Let $u(t,x)$ be an $H^s$ solution to
problem \eqref{equ1.1} with $f(u)=|u|^pu$ defined on $[t_0,T]\times\R^2$ and such that
\begin{equation}\label{equ3.2}
\|u\|_{L_{t,x}^{5}([t_0,T]\times\R^2)}\leq\eta
\end{equation}
for some small constant $\eta.$ Assume $E(Iu(t_0))\leq1$. Then for
$s>\tfrac{1+s_c}2$, $p\geq\tfrac{5}{2}$, and sufficiently large $N,$
we have for any $t\in[t_0,T]$
\begin{equation}\label{equ3.3}
Z_I(t)\lesssim \|\nabla Iu(t_0)\|_2+g(t)^pZ_I(t)+
N^{-(s-s_c)}Z_I(t)g(t)^{p-1}\big[g(t)+h(t)\big],
\end{equation}
where $g(t)$ and $h(t)$ are defined as follows:
\begin{equation}\label{equ3.4a}
g(t)^p=\eta^{\theta_1p}\sup_{s\in[t_0,t]}E(Iu(s))^{\frac{(1-\theta_1)p}{p+2}}+\eta^{\theta_2p}Z_I(t)^{(1-\theta_2)p}+
N^{-(1-s_c)p}Z_I(t)^p
\end{equation}
and
\begin{equation}\label{equ3.4.1}
h(t)=\eta^{\theta_1}\sup_{s\in[t_0,t]}E(Iu(s))^{\frac{1-\theta_1}{p+2}}+Z_I(t)
\end{equation}
 with $\theta_1=\tfrac{5}{2p}$,
 $\theta_2=\tfrac{5}{2(3p-5)}\in(0,1)$.
\end{proposition}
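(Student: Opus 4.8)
The plan is to set up the standard Strichartz/bootstrap machinery for the modified solution $Iu$ and then carefully distribute frequencies. Since $Iu$ solves $(i\partial_t+\Delta)Iu = I(|u|^pu)$, I would apply the Strichartz estimate from Proposition \ref{prop1} to each Littlewood--Paley piece $\nabla P_N Iu$, sum the squares over dyadic $N\geq 1$, and use the square-function/Littlewood--Paley characterization to reduce the task to bounding $\big(\sum_N \|\nabla P_N I(|u|^pu)\|_{L_t^{q'}L_x^{r'}}^2\big)^{1/2}$ on $[t_0,t]$ for a well-chosen dual admissible pair. The leading term $\|\nabla Iu(t_0)\|_2$ comes for free from the data term; everything else must come out of the nonlinearity. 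The first reduction is to write $\nabla I(|u|^pu)$, add and subtract $(I\nabla u)f'(u)$, and invoke the commutator estimates \eqref{equ2.5}--\eqref{equ2.6} of Lemma \ref{commutor}: the ``main'' piece behaves like $\|\nabla Iu\|\,\|f'(u)\|$ (which will feed the $g(t)^pZ_I(t)$ term) and the ``commutator'' piece carries the extra gain $N^{-(1-s+\nu)}$, which after the scaling bookkeeping becomes the $N^{-(s-s_c)}$ factor in \eqref{equ3.3}.

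Next I would estimate the two resulting spacetime norms by Hölder in time and space, splitting $|u|^p = |u|^{5/(\cdots)}\cdot|u|^{\text{rest}}$ so as to peel off exactly one factor of $\|u\|_{L_{t,x}^5}\leq\eta$ each time the exponent arithmetic allows it — this is where $\theta_1=\tfrac{5}{2p}$ and $\theta_2=\tfrac{5}{2(3p-5)}$ enter: they are precisely the interpolation weights that convert a power of $\|u\|_{L_{t,x}^5}$ into the remaining $L_t^\infty$-type norms. The remaining factors of $u$ (resp.\ $f'(u)$, $\langle\nabla\rangle^{1-s+\nu}f'(u)$) must then be re-expressed through $Iu$: low frequencies $|\xi|\lesssim N$ are untouched by $I$, high frequencies are handled by \eqref{equ2.2} of Lemma \ref{highcont}, which is the source of the $N^{-(1-s_c)p}Z_I(t)^p$ terms in $g(t)$ and the structure of $h(t)$. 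The fractional chain rule \eqref{fraclsfz} and product rule \eqref{moser} are used to distribute $\langle\nabla\rangle^{1-s+\nu}$ onto $f'(u)=|u|^p\,(\cdots)$, producing a factor $\|u\|^{p-1}$ times one derivative on $u$, which accounts for the $g(t)^{p-1}[g(t)+h(t)]$ shape. One then bounds $\|u\|_{L_t^\infty L_x^2}$ by mass conservation, $\| |\nabla|^{1/2} Iu\|$-type quantities by interpolating $\|Iu\|_{L^2}$ with $\|\nabla Iu\|_{L^2}\lesssim E(Iu)^{1/2}$, and the potential-energy pieces $\|Iu\|_{L^{p+2}}^{p+2}\lesssim E(Iu)$ directly, which is where the exponents $\tfrac{(1-\theta_1)p}{p+2}$ come from.

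The constraint $s>\tfrac{1+s_c}{2}$ will appear exactly when I convert $|\nabla|^{1/2}$-type Morawetz-controlled norms into $\nabla Iu$ via Bernstein and interpolation: it is the threshold ensuring $1-s+\nu$ can be chosen positive while still keeping $s-s_c=2s-1>0$ with room to spare, and more importantly it makes the exponent on $N$ in the commutator term genuinely negative after one accounts for the $\|\langle\nabla\rangle^{1-s+\nu}f'(u)\|$ factor costing roughly $N^{(1-s)(p-1)}$ worth of high-frequency growth through Lemma \ref{highcont}; the balance $(s-s_c) = (1-s+\nu) - (1-s)(\cdots)$ closes only above that threshold. The requirement $p\geq\tfrac52$ is the numerology that lets the single available power $\|u\|_{L_{t,x}^5}^{\theta_2 p}$ actually absorb a full copy of $\|u\|_{L_{t,x}^5}$, i.e.\ it forces $\theta_2p\geq 1$, equivalently $3p-5\leq 2\cdot\tfrac{5}{2}\cdot\tfrac{p}{?}$ — concretely $\theta_2 = \tfrac{5}{2(3p-5)}\in(0,1)$ needs $3p-5>\tfrac52$, i.e.\ $p>\tfrac52$ for $\theta_2<1$, with $p=\tfrac52$ the borderline. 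The main obstacle, and the step I would be most careful about, is the bookkeeping of exponents in the iterated Hölder splittings: one must simultaneously (a) extract a power of $\eta$ so the term is reabsorbable, (b) land the remaining time-integrability at $L_t^\infty$ so it pairs with $\sup_{s}E(Iu(s))$ and $Z_I(t)$, and (c) keep the net power of $N$ at worst $N^{-(s-s_c)}$ in the commutator term and $N^{-(1-s_c)p}$ in $g$. Getting all three to hold at once is the crux; everything else is an application of the lemmas already assembled in Section 2.
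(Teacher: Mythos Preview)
Your overall architecture matches the paper's proof: Strichartz on $Iu$ with the dual pair $L_{t,x}^{4/3}$, add-and-subtract $(I\nabla u)f'(u)$, invoke Lemma~\ref{commutor} for the commutator, and control $\|u\|_{L_{t,x}^{2p}}^p$ by interpolation and frequency decomposition. But two pieces of your bookkeeping are off and would prevent the estimate from closing as stated.

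First, the frequency decomposition is \emph{three}-way, $u=u_{\leq 1}+u_{1\leq\cdot\leq N}+u_{>N}$, not two-way at $N$. The very-low piece $u_{\leq 1}$ is where the $E(Iu)$ contribution to $g(t)$ comes from: interpolate $\|u_{\leq 1}\|_{L_{t,x}^{2p}}$ between $L_{t,x}^5$ and $L_{t,x}^\infty$ with weight $\theta_1=5/(2p)$ (this is where $p\geq 5/2$ enters, so that $2p\geq 5$ and $\theta_1\leq 1$), then use Bernstein $L_x^\infty\hookleftarrow L_x^{p+2}$ on frequencies $\leq 1$ to reach $\|Iu\|_{L_t^\infty L_x^{p+2}}\lesssim E(Iu)^{1/(p+2)}$. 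The medium piece $u_{1\leq\cdot\leq N}$ is interpolated between $L_{t,x}^5$ and $L_{t,x}^{3p}$ with weight $\theta_2$, after which Sobolev and Bernstein on $|\xi|\geq 1$ upgrade $|\nabla|^{1-4/(3p)}$ to $\nabla$, landing on $Z_I(t)$ via the admissible pair $(3p,\tfrac{6p}{3p-2})$. Only the high piece $u_{>N}$ uses \eqref{equ2.2} with $\sigma=s_c$ to produce $N^{-(1-s_c)}Z_I(t)$. Mass conservation and the $|\nabla|^{1/2}$-interpolation you mention play no role in this proposition; they belong to the later global argument.

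Second, your explanation of the threshold $s>\tfrac{1+s_c}{2}$ and the $N$-power in the commutator term is wrong. There is no $N^{(1-s)(p-1)}$ loss coming from $\langle\nabla\rangle^{1-s+\nu}f'(u)$. The point is to \emph{choose} $\nu:=2s-s_c-1$, so that $1-s+\nu=s-s_c$ exactly; then \eqref{equ2.5} delivers the factor $N^{-(s-s_c)}$ outright. The hypothesis $s>\tfrac{1+s_c}{2}$ is nothing more than the requirement $\nu>0$ (and one checks $\nu<s$) demanded by Lemma~\ref{commutor}. After the fractional chain rule, the high-frequency contribution to $\||\nabla|^{s-s_c}u\|_{L_{t,x}^{2p}}$ is handled by Sobolev and \eqref{equ2.2} with $\sigma=s$, giving $N^{s-1}Z_I(t)\lesssim Z_I(t)$ since $s<1$ --- no loss in $N$, which is exactly why $h(t)$ in \eqref{equ3.4.1} carries no $N$-power.
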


\begin{proof}
Applying the operator $I$ to  \eqref{equ1.1} and using
the Strichartz estimate,  we obtain  for all $t\in[t_0,T]$
\begin{align}\nonumber
Z_I(t)\lesssim&\|\nabla Iu(t_0)\|_2+\|\nabla
If(u)\|_{L_{t,x}^\frac{4}{3}([t_0,t)\times\R^2)}\\\nonumber
\lesssim&\|\nabla Iu(t_0)\|_2+\|(\nabla
Iu)f'(u)\|_{L_{t,x}^\frac{4}{3}([t_0,t)\times\R^2)}\\\nonumber
&+\big\|\nabla IF(u)-(\nabla
Iu)f'(u)\big\|_{L_{t,x}^\frac{4}{3}([t_0,t)\times\R^2)}\\\label{equ3.4}
\triangleq&\|\nabla Iu(t_0)\|_2+II_1+II_2.
\end{align}
Throughout the following proof all spacetime norms will be computed on
$[t_0,t)\times\R^2$.

{\bf $\bullet$ The estimate of the term $II_1$:} Using the H\"older and
Minkowski inequalities, we get \begin{equation}\label{equ3.5}
II_1=\|(\nabla Iu)f'(u)\|_{L_{t,x}^\frac{4}{3}}\lesssim\|\nabla
Iu\|_{L_{t,x}^4}\|u\|_{L_{t,x}^{2p}}^p\lesssim
Z_I(t)\|u\|_{L_{t,x}^{2p}}^p.
\end{equation} It remains to estimate $\|u\|_{L_{t,x}^{2p}}^p.$ We decompose
$u=u_{\leq1}+u_{1\leq\cdot\leq N}+u_{>N}$.
We estimate the low frequency part  by an interpolation and the Bernstein inequality
\begin{align}\nonumber
\|u_{\leq1}\|_{L_{t,x}^{2p}}^p\lesssim&\|u_{\leq1}\|_{L_{t,x}^{5}}^{\theta_1p}\|u_{\leq1}\|_{L_{t,x}^\infty}^{(1-\theta_1)p}\lesssim
\eta^{\theta_1p}\|u_{\leq1}\|_{L_t^\infty
L_x^{p+2}}^{(1-\theta_1)p}\\\label{equ3.1d}
\lesssim&\eta^{\theta_1p}\sup_{s\in[t_0,t]}E(Iu(s))^{\frac{(1-\theta_1)p}{p+2}},
\end{align}
where we have used the condition  $p\ge \tfrac{5}{2}$ and $\theta_1=\tfrac{5}{2p}$.

 For the medium frequency
part, we use an interpolation, the Sobolev embedding and the Bernstein
inequality to estimate
\begin{align}\nonumber
\|u_{1\leq\cdot\leq
N}\|_{L_{t,x}^{2p}}^p\lesssim&\|u_{1\leq\cdot\leq
N}\|_{L_{t,x}^{5}}^{\theta_2p}\|u_{1\leq\cdot\leq
N}\|_{L_{t,x}^{3p}}^{(1-\theta_2)p}\lesssim\eta^{\theta_2p}\big\||\nabla|^{1-\frac{4}{3p}}u_{1\leq\cdot\leq
N}\big\|_{L_t^{3p}L_x^\frac{6p}{3p-2}}^{(1-\theta_2)p}\\\label{equ3.1d1}
\lesssim&\eta^{\theta_2p}\big\|\nabla u_{1\leq\cdot\leq
N}\big\|_{L_t^{3p}L_x^\frac{6p}{3p-2}}^{(1-\theta_2)p}\lesssim\eta^{\theta_2p}Z_I(t)^{(1-\theta_2)p},
\end{align}
where $\theta_2=\tfrac{5}{2(3p-5)}$ and
$(3p,\tfrac{6p}{3p-2})\in\Lambda_0$.

 For the high frequency
part, we use Sobolev embedding and \eqref{equ2.2} with $\sigma=s_c$
to obtain
\begin{align}\nonumber
\|u_{> N}\|_{L_{t,x}^{2p}}^p\lesssim&\big\||\nabla|^{s_c}u_{>
N}\big\|_{L_t^{2p}L_x^\frac{2p}{p-1}}^p\lesssim
N^{-(1-s_c)p}\|\nabla
Iu\|_{L_t^{2p}L_x^\frac{2p}{p-1}}^p\\\label{equ3.1d2} \lesssim&
N^{-(1-s_c)p}Z_I(t)^p.
\end{align}

\noindent Thus, collecting \eqref{equ3.5}-\eqref{equ3.1d2} yields
\begin{equation*}
\|u\|_{L_{t,x}^{2p}}^p\lesssim g(t)^p,
\end{equation*}
and so
\begin{equation}\label{gujiagj}
II_1\lesssim Z_I(t)g(t)^p.
\end{equation}

{\bf $\bullet$ The estimate of the term $II_2$:} By the assumption
$s>\tfrac{1+s_c}2$, we have $\nu:=2s-s_c-1<s$. Thus, we deduce from
Lemma \ref{fensc}, \eqref{gujiagj} and \eqref{equ2.5}
\begin{align}\nonumber
II_2=&\big\|\nabla If(u)-(\nabla
Iu)f'(u)\big\|_{L_{t,x}^\frac43}\\\nonumber
\lesssim&N^{-(s-s_c)}\|\nabla
Iu\|_{L_{t,x}^4}\big\|\langle\nabla\rangle^{s-s_c}f'(u)\big\|_{L_{t,x}^2}\\\nonumber
\lesssim&N^{-(s-s_c)}Z_I(t)\Big(\|f'(u)\|_{L_{t,x}^{2}}+
\big\||\nabla|^{s-s_c}f'(u)\big\|_{L_{t,x}^{2}}\Big)\\\nonumber
\lesssim&N^{-(s-s_c)}Z_I(t)\Big(\|u\|_{L_{t,x}^{2p}}^p+\|u\|_{L_{t,x}^{2p}}^{p-1}\big\||\nabla|^{s-s_c}
u\big\|_{L_{t,x}^{2p}}\Big)\\\nonumber
\lesssim&N^{-(s-s_c)}Z_I(t)g(t)^{p-1}\Big[g(t)+\big\||\nabla|^{s-s_c}
u\big\|_{L_{t,x}^{2p}}\Big]\\\label{equ3.6.1d1}
\lesssim&N^{-(s-s_c)}Z_I(t)g(t)^{p-1}\big[g(t)+h(t)\big],
\end{align}
where  we have used the same argument as deriving
\eqref{gujiagj} to estimate
\begin{align}\nonumber\big\||\nabla|^{s-s_c}
u\big\|_{L_{t,x}^{2p}}\lesssim&\big\|
u_{\leq1}\big\|_{L_{t,x}^{2p}}+\big\||\nabla|^{s-s_c}
u_{1\leq\cdot\leq N}\big\|_{L_{t,x}^{2p}}+\big\||\nabla|^{s-s_c}
u_{>N}\big\|_{L_{t,x}^{2p}}\\\nonumber
\lesssim&~\|u_{\leq1}\|_{L_{t,x}^{2p}}+\|\nabla u_{1\leq\cdot\leq
N}\|_{L_t^{2p}L_x^{\frac{2p}{p-1}}}+\big\||\nabla|^{s}u_{>N}\big\|_{L_t^{2p}L_x^{\frac{2p}{p-1}}}\\\nonumber
\lesssim&~\eta^{\theta_1}\sup_{s\in[t_0,t]}E(Iu(s))^{\frac{1-\theta_1}{p+2}}+Z_I(t)+N^{s-1}Z_I(t)\\\label{equbuch}
\lesssim&~h(t).
\end{align}
 This estimate together with inequality
\eqref{gujiagj} ends the proof of Proposition \ref{zitinc}.
\end{proof}

\begin{remark}\label{rem3.2}
We  assume the $L_{t,x}^5$ interaction Morawetz norm to be
small unlike  the small  interaction Morawetz norm $L_t^4L_x^8$  as
used in \cite{CGT09}.  If we replace
\eqref{equ3.2} by
\begin{align}\label{equ3.12d536}
\|u\|_{L_t^4L_x^8([t_0,T]\times\R^2)}\leq\eta,\end{align}
then, by the similar argument as above, we need the estimate
\begin{equation*}
\big\|(\nabla Iu)f'(u)\big\|_{L_t^{q_1'}L_x^{r_1'}}\lesssim\|\nabla
Iu\|_{L_t^qL_x^r}\|u\|_{L_t^\frac{p}{\theta}L_x^\frac{p}{1-\theta}}^p,\quad
(q_1,r_1),~(q,r)\in\Lambda_0,~\theta\in[0,1]
\end{equation*}
together with the low frequency part
$$
\|u_{\leq1}\|_{L_t^\frac{p}{\theta}L_x^\frac{p}{1-\theta}}^p\lesssim\|u_{\leq1}\|_{L_t^4L_x^8}^{4\theta}\|u_{\leq1}\|_{L_{t,x}^\infty
}^{p-4\theta},
$$
where we need the restriction $\frac{p}\theta\geq4$ and
$\frac{p}{1-\theta}\geq8$. Therefore,
$$p\geq\min_{0\leq\theta\leq1}\max\{4\theta,8(1-\theta)\}=\tfrac83.$$
This argument, compared with $p\geq\tfrac52$ in  Proposition \ref{zitinc},
 shows that $L_{t,x}^5$ is better than  $L_t^4L_x^8$.
\end{remark}

Next, we show the energy increment of $E(Iu)(t)$.

\begin{proposition}[Energy increment]\label{enerin}
Let $u(t,x)$ be an $H^s$ solution to \eqref{equ1.1} with
$f(u)=|u|^pu$ defined on $[t_0,T]\times\R^2$, which  satisfies
\begin{equation}\label{equ3.2a}
\|u\|_{L_{t,x}^{5}([t_0,T]\times\R^2)}\leq\eta
\end{equation}
for some small constant $\eta.$ Assume $E(Iu(t_0))\leq1$. Then for
$s\geq\tfrac{p}{p+1}$, $p\geq\tfrac{11}{4}$ and sufficiently large
$N,$  we have for any $t\in[t_0,T]$
\begin{align}\label{equ3.7}
&\big|\sup_{s\in[t_0,t]}E(Iu(s))-E(Iu(t_0))\big|\\\nonumber
\lesssim&N^{-(1-s_c)}Z_I(t)^3g(t)^{p-1}+N^{-(s-s_c)}Z_I(t)^2g(t)^{p-1}\big[g(t)+h(t)\big]\\\nonumber
&+ k(t)\Big\{N^{-1}g(t)^pZ_I(t)+\eta
N^{-(s-\frac12+\frac1q)}m(t)\Big\},
\end{align}
where $k(t)$ and $m(t)$  are defined by
\begin{align*}
k(t)=&\eta^{\theta_3(p+1)}\sup_{s\in[t_0,t]}E(Iu(s))^{\frac{(1-\theta_3)(p+1)}{p+2}}+
Z_I(t)^{p+1},~\theta_3=\tfrac{5}{3(p+1)}\end{align*} and
\begin{align*}
m(t)=&\eta^{1+\theta_4}g(t)^{(1-\theta_4)(p-1)}\Big(\eta^{\theta_4}
g(t)^{1-\theta_4}+Z_I(t)\Big)
\end{align*}
with $\theta_4=\tfrac{1}{4p-10}$ and $q=\tfrac{5p}{11}$.

\end{proposition}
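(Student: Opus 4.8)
The plan is to differentiate the modified energy in time and reproduce the commutator identity \eqref{mdfeninc}, but with $f(u)=|u|^pu$, so that
$$
\sup_{s\in[t_0,t]}E(Iu(s))-E(Iu(t_0))=\mathrm{Re}\int_{t_0}^{t}\int_{\R^2}\overline{\nabla Iu}\cdot\big[\nabla If(u)-(\nabla Iu)f'(u)\big]\,dx\,ds+\text{(lower-order terms)},
$$
after an integration by parts moving one derivative off $Iu_t$. Concretely, using $iIu_t=-\Delta Iu+If(u)$, one writes $\overline{Iu_t}$ in terms of $\Delta Iu$ and $If(u)$; the $\Delta Iu$ contribution pairs with $|Iu|^pIu-I(|u|^pu)$ and, after integrating by parts, produces the term with $\nabla If(u)-(\nabla Iu)f'(u)$ controlled in $L^{4/3}_{t,x}$ exactly as in \eqref{equ3.6.1d1}; the $If(u)$ contribution gives a genuinely new term which I would estimate with \eqref{equ2.4} and a duality/H\"older pairing, producing the factors $N^{-1}g(t)^p Z_I(t)$ and $\eta N^{-(s-\frac12+\frac1q)}m(t)$. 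Throughout I would run all spacetime norms on $[t_0,t)\times\R^2$, and I would use the smallness hypothesis \eqref{equ3.2a} to insert a power of $\eta$ via interpolation between the interaction Morawetz norm $L^5_{t,x}$ and an $L^\infty_t$-energy-bounded norm, exactly as $\theta_1,\theta_2$ were used in the proof of Proposition \ref{zitinc}.

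The detailed bookkeeping then goes as follows. First, I bound the main commutator term $\|\nabla If(u)-(\nabla Iu)f'(u)\|_{L^{4/3}_{t,x}}$ paired against $\|\nabla Iu\|_{L^4_{t,x}}\lesssim Z_I(t)$; invoking \eqref{equ2.5} with $\nu=2s-s_c-1<s$ (licit since $s>\tfrac{1+s_c}2$, which holds because $s_0\geq\tfrac{1+s_c}2$) and then \eqref{equ2.6}-type bounds on $\langle\nabla\rangle^{s-s_c}f'(u)$, together with the already-established $\|u\|_{L^{2p}_{t,x}}^p\lesssim g(t)^p$ and $\||\nabla|^{s-s_c}u\|_{L^{2p}_{t,x}}\lesssim h(t)$ from \eqref{equbuch}, yields the contributions $N^{-(1-s_c)}Z_I(t)^3 g(t)^{p-1}$ (from the low-frequency/Bernstein loss in passing between $If(u)$ and the genuine nonlinearity, where the extra $Z_I$ factor comes from the third appearance of $\nabla Iu$) and $N^{-(s-s_c)}Z_I(t)^2 g(t)^{p-1}[g(t)+h(t)]$. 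Second, for the $If(u)$-against-commutator term I apply \eqref{equ2.4} with a Sobolev/H\"older choice of exponents; here is where $k(t)$ enters: by interpolating $\|u\|_{L^{q'}_{t,x}}$ or an $L^{p+1}$-type norm between $L^5_{t,x}$ (smallness $\eta^{\theta_3}$, $\theta_3=\tfrac5{3(p+1)}$) and an $L^\infty_t L^{p+2}_x\hookleftarrow E(Iu)$ norm one gets $k(t)=\eta^{\theta_3(p+1)}\sup E(Iu)^{\frac{(1-\theta_3)(p+1)}{p+2}}+Z_I(t)^{p+1}$, and the frequency gain is $N^{-1}$ on the `both-identity' piece and $N^{-(s-\frac12+\frac1q)}$ with $q=\tfrac{5p}{11}$ on the piece where one derivative is spent measuring $\langle\nabla\rangle^{1-s+\nu}$ of the remaining factors, whose norm is organized into $m(t)=\eta^{1+\theta_4}g(t)^{(1-\theta_4)(p-1)}(\eta^{\theta_4}g(t)^{1-\theta_4}+Z_I(t))$ with $\theta_4=\tfrac1{4p-10}$ (the denominator $4p-10$ is exactly where $p\geq\tfrac{11}4$ is forced, so that $\theta_4\in(0,1)$, and the exponent $s-\tfrac12+\tfrac1q>0$ again needs $p$ bounded below and $s\geq\tfrac{p}{p+1}$ to be effective).

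I would carry out the steps in the order: (i) derivation of the energy-increment identity via $iIu_t=-\Delta Iu+If(u)$ and one integration by parts; (ii) the main commutator estimate reusing Proposition \ref{zitinc}'s machinery verbatim; (iii) the two new $If(u)$ terms via \eqref{equ2.4}, choosing the exponent triples $(r,r_1,r_2)$ so that the output norms are exactly $L^4_{t,x}$ on $\nabla Iu$, a $k(t)$-controllable norm on the `bulk', and an $L^2$-type norm carrying $\langle\nabla\rangle^{1-s+\nu}$; (iv) collecting powers of $N$ and $\eta$ and matching them to the stated right-hand side. The main obstacle is step (iii): unlike the even-integer case, $f(u)=|u|^pu$ admits no explicit multilinear expansion, so one must squeeze the frequency gain purely out of Lemma \ref{commutor}, and the exponents $q=\tfrac{5p}{11}$, $\theta_3=\tfrac5{3(p+1)}$, $\theta_4=\tfrac1{4p-10}$ have to be chosen so that (a) all H\"older exponents are admissible (i.e.\ $(q_j,r_j)\in\Lambda_0$ with $q_j>2+\eps_0$), (b) the interpolation against $L^5_{t,x}$ is balanced, and (c) the surviving power of $N$ is strictly negative — and it is precisely the joint feasibility of (a)--(c) that pins down the restriction $p\geq\tfrac{11}4$ and the lower bound $s\geq\tfrac{p}{p+1}$ appearing in the hypotheses.
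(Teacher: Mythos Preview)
Your outline is correct and follows the paper's proof closely. One correction to your displayed identity: after integrating by parts, the $\Delta Iu$ contribution produces $\overline{\nabla Iu}\cdot\nabla\big[f(Iu)-If(u)\big]$, which splits as
\[
(\nabla Iu)\big[f'(Iu)-f'(u)\big]\;+\;\big[(\nabla Iu)f'(u)-I((\nabla u)f'(u))\big];
\]
your identity records only the second summand, but it is the \emph{first} (bounded via $|f'(Iu)-f'(u)|\lesssim|P_{>N}u|\,|u|^{p-1}$ and then $\|P_{>N}u\|_{L^{2p}_{t,x}}\lesssim N^{-(1-s_c)}Z_I(t)$ by Sobolev embedding and \eqref{equ2.2} with $\sigma=s_c$) that yields the term $N^{-(1-s_c)}Z_I(t)^3g(t)^{p-1}$. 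Also, the hypothesis $s\geq\tfrac{p}{p+1}$ is not used for the exponent $s-\tfrac12+\tfrac1q$ but rather in the bound $\|If(u)\|_{L^4_{t,x}}\lesssim\|u\|_{L^{4(p+1)}_{t,x}}^{p+1}\lesssim k(t)$, where the high-frequency piece requires \eqref{equ2.2} with $\sigma=\tfrac{p}{p+1}\leq s$; and the restriction $p\geq\tfrac{11}4$ enters exactly as $4q=\tfrac{20p}{11}\geq5$, which is what makes the interpolation \eqref{restricti} between $L^5_{t,x}$ and $L^{2p}_{t,x}$ admissible.
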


\begin{proof}
Since $iIu_t+\Delta Iu=IF(u),$ we get by a simple computation
\begin{align*}
E(Iu(t))-E(Iu(t_0))=&\int_{t_0}^t\frac{\pa}{\pa s}E(Iu(s))ds\\
=&{\rm Re}\int_{t_0}^t\int_{\R^d}\overline{Iu_t}\big(-\Delta
Iu+f(Iu)\big)dxds\\
=&{\rm
Re}\int_{t_0}^t\int_{\R^d}\overline{Iu_t}\big[f(Iu)-If(u)\big]dxds\\
=&-{\rm Im}\int_{t_0}^t\int_{\R^d}\overline{\nabla Iu}\cdot\nabla\big[f(Iu)-If(u)\big]dxds\\
&-{\rm
Im}\int_{t_0}^t\int_{\R^d}\overline{If(u)}\big[f(Iu)-If(u)\big]dxds\\
=&-{\rm Im}\int_{t_0}^t\int_{\R^d}\overline{\nabla Iu}\cdot\nabla Iu\big[f'(Iu)-f'(u)\big]dxds\\
&-{\rm Im}\int_{t_0}^t\int_{\R^d}\overline{\nabla
Iu}\cdot\Big[(\nabla
Iu)f'(u)-I(f'(u)\nabla u)\Big]dxds\\
&-{\rm Im}\int_{t_0}^t\int_{\R^d}\overline{If(u)}\big[f(Iu)-If(u)\big]dxds\\
\triangleq&II_1+II_2+II_3.
\end{align*}

{\bf $\bullet$ The estimate of $II_1$:}  Since  $(2p,\tfrac{2p}{p-1})\in\Lambda_0$, by inequality  \eqref{equ2.2} with $\sigma=s_c$,
and  the Sobolev embedding, we estimate
\begin{align}\nonumber
|II_1|\lesssim&\|\nabla
Iu\|_{L_{t,x}^4}^2\|f'(Iu)-f'(u)\|_{L_{t,x}^2}\\\nonumber \lesssim&
Z_I(t)^2\|P_{>N}u\|_{L_{t,x}^{2p}}\|u\|_{L_{t,x}^{2p}}^{p-1}\\\nonumber
\lesssim&Z_I(t)^2\big\||\nabla|^{s_c}P_{>N}u\big\|_{L_t^{2p}L_x^{\frac{2p}{p-1}}}g(t)^{p-1}\\\nonumber
\lesssim&Z_I(t)^2 N^{-(1-s_c)}\|\nabla
Iu\|_{L_t^{2p}L_x^{\frac{2p}{p-1}}}g(t)^{p-1}\\\label{equ3.8}
\lesssim&N^{-(1-s_c)}Z_I(t)^3g(t)^{p-1}.
\end{align}

{\bf $\bullet$ The estimate of $II_2$:} Using H\"older's inequality
and \eqref{equ3.6.1d1}, we obtain
\begin{align}\nonumber
|II_2|\lesssim&\|\nabla Iu\|_{L_{t,x}^4}\big\|(\nabla
Iu)f'(u)-I(f'(u)\nabla
u)\big\|_{L_{t,x}^\frac{4}{3}}\\\label{equ3.9}
\lesssim&N^{-(s-s_c)}Z_I(t)^2g(t)^{p-1}\big[g(t)+h(t)\big].
\end{align}

{\bf $\bullet$ The estimate of $II_3$:} By H\"older's inequality and
Minkowski's inequality, we get
\begin{align}\nonumber
|II_3|\lesssim&\|If(u)\|_{L_{t,x}^4}\|f(Iu)-If(u)\|_{L_{t,x}^\frac{4}{3}}\\\label{equ3.10}
\lesssim&\|u\|_{L_{t,x}^{4
(p+1)}}^{p+1}\Big\{\big\|Iu(|Iu|^p-|u|^p)\big\|_{L_{t,x}^\frac{4}{3}}+\big\|(Iu)|u|^p-I(|u|^pu)\big\|_{L_{t,x}^\frac{4}{3}}\Big\}.
\end{align}
To estimate $\|u\|_{L_{t,x}^{4(p+1)}}^{p+1}$, we decompose
$u=u_{\leq1}+u_{1\leq\cdot\leq N}+u_{>N}$. Using the same argument
as leading to \eqref{gujiagj} and
$(4(p+1),\tfrac{4(p+1)}{2p+1})\in\Lambda_0$, one has  the inequality by
means of \eqref{equ2.2} with $\sigma=\frac{p}{p+1}$,
\begin{align}\nonumber
&\|u\|_{L_{t,x}^{4(p+1)}}^{p+1}\\\nonumber
\lesssim&\|u_{\leq1}\|_{L_{t,x}^{4(p+1)}}^{p+1}+\|u_{1\leq\cdot\leq
N}\|_{L_{t,x}^{4(p+1)}}^{p+1}+\|u_{>N}\|_{L_{t,x}^{4(p+1)}}^{p+1}\\\nonumber
\lesssim&\|u\|_{L_{t,x}^{5}}^{\theta_3(p+1)}\|u_{\leq1}\|_{L_{t,x}^\infty}^{(1-\theta_3)(p+1)}+\big\||\nabla|^\frac{p}{p+1)}u_{1\leq\cdot\leq
N}\big\|_{L_t^{4(p+1)}L_x^\frac{4(p+1)}{2p+1}}^{p+1}+\big\||\nabla|^\frac{p}{p+1}u_{\geq
N}\big\|_{L_t^{4(p+1)}L_x^\frac{4(p+1)}{2p+1}}^{p+1} \\\nonumber
\lesssim&\eta^{\theta_3(p+1)}\|u_{\leq1}\|_{L_t^\infty
L_x^{p+2}}^{(1-\theta_3)(p+1)}+\|\nabla u_{1\leq\cdot\leq
N}\|_{L_t^{4(p+1)}L_x^\frac{4(p+1)}{2p+1}}^{p+1}+N^{(\frac{p}{p+1}-1)(p+1)}\|\nabla
Iu\|_{L_t^{4(p+1)}L_x^\frac{4(p+1)}{2p+1}}^{p+1}
\\\nonumber
\lesssim&\eta^{\theta_3(p+1)}\sup_{s\in[t_0,t]}E(Iu(s))^{\frac{(1-\theta_3)(p+1)}{p+2}}+
Z_I(t)^{p+1}\\\label{equ3.11} \simeq& k(t),
\end{align}
where $\theta_3=\tfrac{5}{3(p+1)}$ and where we have used the condition $s\geq\frac{p}{p+1}$.

To estimate $\big\|Iu(|Iu|^p-|u|^p)\big\|_{L_{t,x}^\frac{4}{3}}$, we
use H\"older's inequality, \eqref{equ2.2} with $\sigma=0$ and
\eqref{gujiagj} to get
\begin{align}\nonumber
\big\|Iu(|Iu|^p-|u|^p)\big\|_{L_{t,x}^\frac{4}{3}}\lesssim&\|Iu\|_{L_{t,x}^{2p}}\|u\|_{L_{t,x}^{2p}}^{p-1}\|u_{\geq
N}\|_{L_{t,x}^4}\\\nonumber
\lesssim&\|u\|_{L_{t,x}^{2p}}^pN^{-1}\|\nabla
Iu\|_{L_{t,x}^4}\\\label{equ3.12} \lesssim&N^{-1}g(t)^pZ_I(t).
\end{align}

To estimate $\big\|(Iu)|u|^p-I(|u|^pu)\big\|_{L_{t,x}^\frac{4}{3}}$,
using \eqref{equ2.4} with $\nu=2s-\tfrac32+\tfrac1q\in(0,s)$ and
\eqref{fraclsfz}, we obtain for $q=\tfrac{5p}{11}$
\begin{align}\label{equ3.13}
&\big\|(Iu)|u|^p-I(|u|^pu)\big\|_{L_{t,x}^\frac{4}{3}}\\\nonumber
\lesssim&N^{-(s-\frac12+\frac1q)}\|Iu\|_{L_{t,x}^5}\big\|\langle\nabla\rangle^{s-\frac12+
\frac1q}|u|^p\big\|_{L_{t,x}^{\frac{4q}{p}}}\\\nonumber
\lesssim&N^{-(s-\frac12+\frac1q)}\eta\|u\|_{L_{t,x}^{4q}}^{p-1}
\Big(\|u\|_{L_{t,x}^{4q}}+\big\||\nabla|^{
s-\frac12+\frac1q}u\big\|_{L_{t,x}^{4q}}\Big)\\\nonumber
\lesssim&\eta^{1+\theta_4(p-1)}N^{-(s-\frac12+\frac1q)}g(t)^{(1-\theta_4)(p-1)}\Big(\eta^{\theta_4}
g(t)^{1-\theta_4}+Z_I(t)\Big)\\\nonumber
\lesssim&N^{-(s-\frac12+\frac1q)}m(t),
\end{align}
where   we have used the estimates
\begin{align}\label{restricti}
\|u\|_{L_{t,x}^{4q}}\lesssim\|u\|_{L_{t,x}^{5}}^{\theta_4}\|u\|_{L_{t,x}^{2p}}^{1-\theta_4}\lesssim\eta^{\theta_4}
g(t)^{1-\theta_4},~\theta_4=\tfrac{1}{4p-10}
\end{align}
and
\begin{align*}
\big\||\nabla|^{
s-\frac12+\frac1q}u\big\|_{L_{t,x}^{4q}}\lesssim&\|u_{\leq1}\|_{L_{t,x}^{4q}}+\big\||\nabla|^su_{1\leq\cdot\leq
N}\big\|_{L_t^{4q}L_x^r}+\big\||\nabla|^su_{\geq
N}\big\|_{L_t^{4q}L_x^r}\\
\lesssim&\eta^{\theta_4} g(t)^{1-\theta_4}+Z_I(t)+N^{1-s}Z_I(t),
\end{align*}
with $r=\tfrac{20p}{5p-11},~(4q,r)\in\Lambda_0.$  Since inequality  $p\geq\tfrac{11}{4}$  guarantees $4q\geq 5$,
the interpolation inequality in \eqref{restricti} is valid.
Thus, plugging \eqref{equ3.11}-\eqref{equ3.13} into \eqref{equ3.10},
we get
\begin{equation}\label{equ3.14}
|II_3|\lesssim k(t)\Big\{N^{-1}g(t)^pZ_I(t)+
N^{-(s-\frac12+\frac1q)}m(t)\Big\}.
\end{equation}
 This estimate,  together with  \eqref{equ3.8}, \eqref{equ3.9},  yields \eqref{equ3.7}.
\end{proof}

\vskip 0.2cm

Now we use  a  standard bootstrap argument to  show that the quantity $E(Iu)(t)$ is
``almost conserved"  by making use of inequality
$$s-s_c\leq\min\{1-s_c,~s-\tfrac12+\tfrac1q\}.$$

\begin{proposition}[Almost conservation law]\label{almost}
Let $u(t,x)$ be an $H^s$ solution to problem \eqref{equ1.1} with
$f(u)=|u|^pu$ defined on $[t_0,T]\times\R^2$ and  satisfy
\begin{equation}\label{equ3.2.1}
\|u\|_{L_{t,x}^{5}([t_0,T]\times\R^2)}\leq\eta
\end{equation}
for some small constant $\eta.$ Assume $E(Iu(t_0))\leq1$. Then for
$$s\geq\max\big\{\tfrac{1+s_c}2,~\tfrac{p}{p+1}\big\},\quad
p\geq\tfrac{11}{4}$$
and sufficiently large $N,$ we have
\begin{align}\label{equ3.3.1}
E(Iu)(t)=E(Iu(t_0))+O(N^{s_c-s}).
\end{align}

\end{proposition}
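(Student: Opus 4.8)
The plan is to run a standard continuity (bootstrap) argument coupling the two a priori estimates of Propositions \ref{zitinc} and \ref{enerin}; note that the hypotheses $s\ge\max\{\tfrac{1+s_c}2,\tfrac{p}{p+1}\}$ and $p\ge\tfrac{11}4$ are precisely what is required for both of those propositions to apply on $[t_0,T]$. Since $E(Iu(t_0))\le1$ we have $\|\nabla Iu(t_0)\|_2\le\sqrt2$. I would fix an absolute constant $C_*$, large compared with the implied constants in \eqref{equ3.3} and \eqref{equ3.7}, and let $\mathcal{S}$ be the set of $t\in[t_0,T]$ for which $Z_I(t)\le C_*\|\nabla Iu(t_0)\|_2$ and $\sup_{\tau\in[t_0,t]}E(Iu(\tau))\le2$. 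Both quantities are continuous and nondecreasing in $t$ and $t_0\in\mathcal{S}$, so $\mathcal{S}$ is a nonempty closed subinterval; it then suffices to show that on $\mathcal{S}$ both bounds improve to $Z_I(t)\le\tfrac12 C_*\|\nabla Iu(t_0)\|_2$ and $\sup_{[t_0,t]}E(Iu)\le\tfrac32$, provided $\eta$ is small and then $N$ large, since $\mathcal{S}$ is then also open and hence all of $[t_0,T]$.

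First I would close the $Z_I$ bound. On $\mathcal{S}$, the definitions \eqref{equ3.4a}--\eqref{equ3.4.1} together with $\sup_{[t_0,t]}E(Iu)\le2$ and $Z_I(t)\le C_*\|\nabla Iu(t_0)\|_2\lesssim1$ give $g(t)^p\lesssim\eta^{\theta_1 p}+\eta^{\theta_2 p}+N^{-(1-s_c)p}$ and $h(t)\lesssim1$; hence $g(t)^p\le\tfrac14$ and $N^{-(s-s_c)}g(t)^{p-1}[g(t)+h(t)]\le\tfrac14$ once $\eta$ is small and $N$ large. Inserting these into \eqref{equ3.3} yields $Z_I(t)\le C\|\nabla Iu(t_0)\|_2+\tfrac12 Z_I(t)$, i.e. $Z_I(t)\le 2C\|\nabla Iu(t_0)\|_2$, which for $C_*>4C$ is the sought strict improvement; in particular $Z_I(t)\lesssim1$ on $\mathcal{S}$.

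Next I would close the energy bound. With $Z_I(t)\lesssim1$ and $\sup_{[t_0,t]}E(Iu)\le2$, all of $g(t),h(t),k(t),m(t)$ in Proposition \ref{enerin} are $O(1)$. Since $s<1$ gives $1-s_c\ge s-s_c$, and with $q=\tfrac{5p}{11}$ one has $\tfrac1q=\tfrac{11}{5p}\ge\tfrac2p-\tfrac12$, so that $s-\tfrac12+\tfrac1q\ge s-s_c$ — i.e. $s-s_c\le\min\{1-s_c,\,s-\tfrac12+\tfrac1q\}$, the inequality recorded before the statement — every term on the right-hand side of \eqref{equ3.7} is $\lesssim N^{-(s-s_c)}=N^{s_c-s}$. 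Thus $\big|\sup_{[t_0,t]}E(Iu)-E(Iu(t_0))\big|\le C N^{s_c-s}\le\tfrac12$ for $N$ large, so $\sup_{[t_0,t]}E(Iu)\le\tfrac32$, the desired improvement, whence $\mathcal{S}=[t_0,T]$. Since the estimates in the proof of Proposition \ref{enerin} in fact bound $|E(Iu)(t)-E(Iu(t_0))|$ pointwise in $t$, the same computation gives $|E(Iu)(t)-E(Iu(t_0))|\lesssim N^{s_c-s}$ for every $t\in[t_0,T]$, which is \eqref{equ3.3.1}.

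The substantive difficulty is already packaged into Propositions \ref{zitinc} and \ref{enerin} — the commutator and multilinear estimates controlling the modified-energy increment and the Strichartz quantity $Z_I$. For the present statement the only delicate points are the bookkeeping that matches each negative power of $N$ in \eqref{equ3.3} and \eqref{equ3.7} against $N^{s_c-s}$, which reduces to the arithmetic inequality $s-s_c\le\min\{1-s_c,s-\tfrac12+\tfrac1q\}$, and the (routine) continuity of $t\mapsto Z_I(t)$ and $t\mapsto\sup_{[t_0,t]}E(Iu)$ used to open the bootstrap set near $t_0$.
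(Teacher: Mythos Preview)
Your proof is correct and follows essentially the same approach as the paper: a standard bootstrap argument coupling Propositions \ref{zitinc} and \ref{enerin}, using the inequality $s-s_c\le\min\{1-s_c,\,s-\tfrac12+\tfrac1q\}$ to reduce every negative power of $N$ in \eqref{equ3.7} to $N^{s_c-s}$. Your presentation is in fact somewhat more explicit than the paper's (which defines sets $\Omega_1,\Omega_2$ with constants $C_1,C_2,2C_1,2C_2$ and invokes local well-posedness for the openness step), but the substance is identical.
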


\begin{proof} Expression \eqref{equ3.3.1} will follow from Proposition \ref{zitinc} and
\ref{enerin} provided   we  establish
\begin{equation}\label{equ3.15}
Z_I(t)\lesssim1\quad \text{and}\quad
\sup_{s\in[t_0,t]}E(Iu(s))\lesssim1,\quad\forall~ t\in[t_0,T].
\end{equation}
From the assumption $E(Iu(t_0))\leq1$, we only need to prove that
\begin{equation}\label{equ3.16}
Z_I(t)\lesssim\|\nabla Iu(t_0)\|_2,\quad\forall~t\in[t_0,T]
\end{equation}
and
\begin{equation}\label{equ3.17}
\sup_{s\in[t_0,t]}E(Iu(s))\lesssim
E(Iu(t_0)),\quad\forall~t\in[t_0,T].
\end{equation}
We show it  by a  standard  bootstrap argument. It suffices  to show that the above two properties hold  on the interval $[t_0,T]$.
Let
\begin{align*}
\Omega_1:=&\big\{t\in[t_0,T]:~Z_I(t)\leq C_1\|\nabla
Iu_0\|_2,~\sup_{s\in[t_0,t]}E(Iu(s))\leq C_2E(Iu(t_0))\big\},\\
\Omega_2:=&\big\{t\in[t_0,T]:~Z_I(t)\leq 2C_1\|\nabla
Iu_0\|_2,~\sup_{s\in[t_0,t]}E(Iu(s))\leq 2C_2E(Iu(t_0))\big\},
\end{align*}
where  $C_1$ and $C_2$  are sufficiently large
 constants which may depend  on  the Strichartz constant.

In order to run the bootstrap argument successfully, we need to
verify three properties:
\begin{enumerate}
\item $\Omega_1$ is a nonempty closed set.

\item $\Omega_2\subset\Omega_1$.

\item If $t\in\Omega_1$, then there exists $\varepsilon>0$ such that
$[t,t+\varepsilon)\subset\Omega_2$.
\end{enumerate}

In fact,  since $t_0\in\Omega_1$, one easily verifies $\Omega_1$ is a nonempty closed by Fatou's Lemma.
Combining  Proposition \ref{zitinc} and
\ref{enerin}  yields (2) by taking $N$ sufficiently large and $\eta$ sufficiently small
depending on $C_1,~C_2$ and $E(Iu(t_0))$.
property (3)   follows from (2) and  from the local
well-posedness theory.

\vskip0.12cm

The last two statements show that $\Omega_1$ is open from the right-hand side and
Proposition \ref{almost} is proved.
\end{proof}

\subsection{Global well-posedness}
In this part, we  establish the global time-space estimates  in terms of a rough norm of initial data
by making use of  the interaction Morawetz estimate  and  almost conservation
law with a scaling argument.

\begin{proposition}\label{promain}
Suppose $u(t,x)$ is a global solution to problem \eqref{equ1.1} with
$f(u)=|u|^pu$ satisfying  $u_0\in C_0^\infty(\R^2)$. Then for
$$\max\big\{\tfrac{p}{p+1},\tfrac{1+s_c}2,s_1\big\}<s<1~~\text{and}~~
p\geq\tfrac{11}{4},$$ we have
\begin{align}\label{equ4.1}
\|u\|_{L_{t,x}^{5}(\R\times\R^2)}\leq&
C\big(\|u_0\|_{H^s(\R^2)}\big),\\\label{equ4.2}
\sup_{t\in\R}\|u(t)\|_{H^s(\R^2)}\leq&C\big(\|u_0\|_{H^s(\R^2)}\big),
\end{align}
where $s_1$ is the positive root of the quadratic equation
$$s^2+2s_cs+s_c^2-4s_c=0.$$
\end{proposition}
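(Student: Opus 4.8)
The plan is to combine the almost conservation law (Proposition \ref{almost}) with the $L^5_{t,x}$ interaction Morawetz estimate (Lemma \ref{lemm2.7}) via a scaling argument, following the scheme of \cite{VZ}. First I would rescale the solution: for $\lambda>0$ set $u^\lambda(t,x)=\lambda^{-2/p}u(\lambda^{-2}t,\lambda^{-1}x)$, which is again a solution of \eqref{equ1.1} with $f(u)=|u|^pu$ and has $\dot H^{s_c}$-norm preserved. Choosing $\lambda=N^{(1-s)/(s-s_c)}$ (up to constants), one arranges that $E(Iu^\lambda(0))\leq \tfrac12$; this uses \eqref{equ2.3}, the smallness of the rescaled kinetic energy $\|\nabla Iu^\lambda(0)\|_2^2\lesssim N^{2(1-s)}\lambda^{-2(s-s_c)}\|u_0\|_{H^s}^2$ (bounded), and control of the potential term $\|Iu^\lambda(0)\|_{p+2}^{p+2}$ by Gagliardo--Nirenberg together with mass conservation. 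I would then fix a large time $T$, partition $[0,\lambda^2 T]$ (the rescaled time interval) into $O(1)$-many subintervals $J_k$ on each of which the local $L^5_{t,x}$ norm of $u^\lambda$ is $\leq\eta$; the number of such subintervals is controlled by $\|u^\lambda\|_{L^5_{t,x}}^5/\eta^5$.

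Next I would run the increment/bootstrap machinery on each subinterval. On each $J_k$, Proposition \ref{almost} gives $E(Iu^\lambda)(t)=E(Iu^\lambda(\inf J_k))+O(N^{s_c-s})$, so after summing over the $M$ subintervals the modified energy stays $\leq 1$ provided $M\cdot N^{s_c-s}\ll 1$. The interaction Morawetz estimate \eqref{equ2.13} applied to $Iu^\lambda$ (more precisely to $u^\lambda$, using $\|u^\lambda\|_{L^\infty_t\dot H^{1/2}}\lesssim \|Iu^\lambda\|_{L^\infty_t H^1}\lesssim 1$ by interpolation and the uniform energy bound, and $\|u^\lambda(0)\|_2=\|u_0\|_2$ by scaling-invariance of mass) yields a global-in-time bound $\|u^\lambda\|_{L^5_{t,x}([0,\lambda^2T]\times\R^2)}^5\lesssim 1$ that is \emph{independent of $T$}. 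This in turn bounds the number $M$ of subintervals by an absolute constant, closing the loop: one needs $N$ chosen large enough (depending only on $\|u_0\|_{H^s}$) so that $M N^{s_c-s}\ll 1$, and the consistency of the two requirements — $\lambda=\lambda(N)$ large enough for initial smallness of $E(Iu^\lambda(0))$, and $M=M(N)$ times $N^{s_c-s}$ small — is exactly where the constraint $s>s_1$ enters: writing out the powers of $N$ in both requirements and demanding they be simultaneously satisfiable gives the quadratic inequality $s^2+2s_cs+s_c^2-4s_c>0$, i.e. $s>s_1$.

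Having obtained $\|u^\lambda\|_{L^5_{t,x}(\R\times\R^2)}\lesssim_N 1$ uniformly in $T$, I would undo the scaling: by the change of variables $\|u\|_{L^5_{t,x}(\R\times\R^2)}=\lambda^{2/p-4/5}\|u^\lambda\|_{L^5_{t,x}(\R\times\R^2)}$ with $\lambda$ a fixed (large) power of $N$ and $N=N(\|u_0\|_{H^s})$, this gives \eqref{equ4.1} with a constant depending only on $\|u_0\|_{H^s}$. For \eqref{equ4.2}, the uniform bound $\|Iu^\lambda\|_{L^\infty_t H^1}\lesssim 1$ combined with \eqref{equ2.3} gives $\sup_t\|u^\lambda(t)\|_{H^s}\lesssim N^{1-s}$, and scaling back (noting $\|u(t)\|_{H^s}\lesssim \|u(t)\|_{L^2}+\|u(t)\|_{\dot H^s}$ and that the $\dot H^{s_c}\hookrightarrow$-type pieces scale favorably, while the $L^2$ part is conserved) yields the claimed bound $\sup_{t\in\R}\|u(t)\|_{H^s}\leq C(\|u_0\|_{H^s})$.

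The main obstacle I anticipate is the bookkeeping of the scaling exponents to verify that the number of Morawetz subintervals really is controlled uniformly: one must check that the powers of $N$ coming from (a) the cost $\lambda^{O(1)}$ of rescaling the $L^5$ norm, (b) the requirement $\lambda^{2(s-s_c)}\gtrsim N^{2(1-s)}$ for initial energy smallness, and (c) the per-subinterval energy increment $N^{s_c-s}$ all fit together so that $N$ can be taken depending on $\|u_0\|_{H^s}$ alone and not on $T$. The condition $p\geq\tfrac{11}{4}$ and $s>\max\{\tfrac{p}{p+1},\tfrac{1+s_c}2,s_1\}$ is precisely what makes the relevant exponent negative; keeping careful track of these — and of the fact that the potential-energy part of $E(Iu^\lambda(0))$ is genuinely small (not merely bounded) after rescaling, which requires $p>2$ so that $s_c<1$ and the Gagliardo--Nirenberg exponent is subcritical — is the technical heart of the argument.
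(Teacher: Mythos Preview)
Your overall architecture is right, but there is a concrete error that breaks the accounting at the heart of the argument. You write ``$\|u^\lambda(0)\|_2=\|u_0\|_2$ by scaling-invariance of mass''. This is false for $p>2$: the rescaling $u^\lambda(t,x)=\lambda^{-2/p}u(\lambda^{-2}t,\lambda^{-1}x)$ preserves $\dot H^{s_c}$, not $L^2$; in fact $\|u^\lambda(0)\|_{L^2(\R^2)}=\lambda^{s_c}\|u_0\|_{L^2(\R^2)}$ with $s_c=1-2/p>0$. Hence the interaction Morawetz estimate \eqref{equ2.13} does \emph{not} give $\|u^\lambda\|_{L^5_{t,x}}^5\lesssim 1$. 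What it gives is
\[
\|u^\lambda\|_{L^5_{t,x}}^5\lesssim \|u^\lambda\|_{L^\infty_t\dot H^{1/2}}^2\|u_0^\lambda\|_2^3
\lesssim \lambda^{4s_c},
\]
once one also tracks the $\lambda^{s_c/2}$ factor in $\|u^\lambda\|_{\dot H^{1/2}}\lesssim \|u^\lambda\|_2^{1/2}\|Iu^\lambda\|_{\dot H^1}^{1/2}$. Consequently the number of subintervals is $M\sim \lambda^{4s_c}$, not an absolute constant; the condition $MN^{s_c-s}\ll 1$ then reads $\lambda^{4s_c}N^{s_c-s}\ll 1$, and combining with $\lambda\sim N^{(1-s)/(s-s_c)}$ is what produces the quadratic inequality $s^2+2s_cs+s_c^2-4s_c>0$. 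Your later hedge ``$M=M(N)$'' is inconsistent with the earlier claim and shows this step has not been carried out.

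A second, related gap: as written your argument is circular. You propose to partition $[0,\lambda^2T]$ into subintervals with small $L^5$ norm, apply Proposition~\ref{almost} on each to keep $E(Iu^\lambda)\leq 1$, and then invoke Morawetz to bound $\|u^\lambda\|_{L^5}$; but the number of subintervals is determined by $\|u^\lambda\|_{L^5}$, which you only control \emph{after} the energy has been propagated. The paper resolves this by a genuine bootstrap on the quantity $\|u^\lambda\|_{L^5_{t,x}([0,T]\times\R^2)}$: one assumes it is $\leq 2C\lambda^{4s_c/5}$ on $\Omega_2$, uses this to bound $M\lesssim \lambda^{4s_c}$, propagates the modified energy, and then feeds the energy bound back into Morawetz (together with the careful low/high-frequency splitting of $\|u^\lambda\|_{\dot H^{1/2}}$ in \eqref{equ4.7}--\eqref{equ4.8}) to improve the bound to $\leq C\lambda^{4s_c/5}$. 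Without this bootstrap structure and the correct mass scaling, the argument does not close.
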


\begin{remark} From the local well-posedness theory, we know that the lifespan of a
 local solution depends only on the
$H^s$-norm of the initial data. Thus, the global well-posedness part
of Theorem \ref{theorem} follows from \eqref{equ4.2} and the standard density argument.
\end{remark}

\begin{proof}[ Proof of Proposition  \eqref{promain}]
If $u$ is a solution to problem \eqref{equ1.1} with $f(u)=|u|^pu$, so
is
\begin{equation}\label{equ4.3}
u^\la(t,x)=\la^{-\frac2p}u\big(\tfrac{t}{\la^2},\tfrac{x}{\la}\big).
\end{equation}
By inequality \eqref{equ2.3} and the Sobolev embedding, we have
\begin{align*}
\|\nabla
 Iu_0^\la\|_{L^2(\R^2)}\lesssim&N^{1-s}\|u_0^\la\|_{\dot H^s}\simeq
 N^{1-s}\la^{s_c-s}\|u_0\|_{\dot H^s},\\
\|Iu_0^\la\|_{L^{p+2}}\lesssim&\|u_0^\la\|_{L^{p+2}}=\la^{-\frac2p+\frac
2{p+2}}\|u_0\|_{p+2}\lesssim\la^{-\frac2p+\frac{2}{p+2}}\|u_0\|_{H^s}.
\end{align*}
As $s>s_c$, taking $\la$ sufficiently large depending on
$\|u_0\|_{H^s}$ and $N$ such that
\begin{equation}\label{equ4.4}
N^{1-s}\la^{s_c-s}\|u_0\|_{\dot H^s}\ll1\quad\text{and}\quad
\la^{-\frac2p+\frac{2}{p+2}}\|u_0\|_{H^s}\ll1,
\end{equation}
we get
\begin{equation}\label{assimlar}
E(Iu_0^\la)\ll1.
\end{equation}

Next, we claim that there exists an absolute constant $C$ such that
\begin{equation}\label{equ4.5}
\|u^\la\|_{L_{t,x}^{5}(\R\times\R^2)}\leq C\la^{\frac{4}{5}s_c}.
\end{equation}
Choosing $\lambda=1$ yields \eqref{equ4.1}. We prove  inequality \eqref{equ4.5}
via a bootstrap argument. By time reversal symmetry, it suffices to
argue for positive time only. Define
$$\Omega_1:=\big\{t\in[0,\infty):~\|u^\la\|_{L_{t,x}^{5}([0,t]\times\R^2)}\leq C\la^{\frac{4}{5}s_c}\big\}.$$
Our goal is to prove $\Omega_1=[0,\infty)$. Let
$$\Omega_2:=\big\{t\in[0,\infty):~\|u^\la\|_{L_{t,x}^{5}([0,t]\times\R^2)}\leq 2C\la^{\frac{4}{5}s_c}\big\}.$$
In order to run the bootstrap argument successfully, we need to
check the following properities:
\begin{enumerate}
\item $\Omega_1$ is  a nonempty  closed(as $0\in\Omega_1$ and using Fatou's Lemma);
\item $\Omega_2\subset\Omega_1$;
\item If $t\in\Omega_1$, then there exists $\varepsilon>0$ such that
$[t,t+\varepsilon)\subset\Omega_2$.
\end{enumerate}
Property (3) follows from (2) and the local
well-posedness theory.  Thus, it suffices  to prove (2): For any
$T\in\Omega_2$, we want to show that $T\in\Omega_1$. Throughout the
following proof, all the space-time norms will be computed  on $[0,T]\times\R^2$.

Using the interaction Morawetz estimate and the mass conservation,
we get
\begin{equation}\label{equ4.6d1}
\|u^\la\|_{L_{t,x}^5}\lesssim\|u_0^\la\|_2^\frac35\|u^\la\|_{L_t^\infty
\dot
H^\frac12}^\frac{2}{5}\lesssim\la^{\frac{3}{5}s_c}\|u_0\|_2^\frac3{5}\|u^\la\|_{L_t^\infty
\dot H^\frac12}^\frac{2}{5}.
\end{equation}
To control the term  $\|u^\la(t)\|_{L_t^\infty\dot H^\frac12}$, we
decompose $u^\la=P_{\leq N}u^\la+P_{>N}u^\la.$

For the low frequency part, we interpolate between the $L_x^2$-norm
and $\dot H^1$-norm and we use the fact that the operator $I$ is the
identity on frequencies $|\xi|\leq N$:
\begin{equation}\label{equ4.7}
\big\|P_{\leq N}u^\la\big\|_{\dot H^\frac12}\lesssim\big\|P_{\leq
N}u^\la\big\|_{2}^\frac12\big\|P_{\leq N}u^\la\big\|_{\dot
H^1}^\frac12\lesssim\la^{\frac12s_c}\|u_0\|_2^\frac12\|Iu^\la\|_{\dot
H^1}^\frac12.
\end{equation}
To estimate the high frequency part, we interpolate between the
$L_x^2$-norm and $\dot H^s$-norm and use \eqref{equ2.2} and
\eqref{equ4.4} to obtain
\begin{align}\nonumber
\big\|P_{>N}u^\la\big\|_{\dot
H^\frac12}\lesssim&\big\|P_{>N}u^\la\big\|_2^{1-\frac1{2s}}\big\|P_{>N}u^\la\big\|_{\dot
H^s}^\frac1{2s}\\\nonumber
\lesssim&\la^{(1-\frac1{2s})s_c}\|u_0\|_2^{1-\frac1{2s}}N^{\frac{s-1}{2s}}\|Iu^\la\|_{\dot
H^1}^\frac1{2s}\\\label{equ4.8}
\lesssim&\la^{s_c-\frac12}\|u_0\|_2^{1-\frac1{2s}}\|Iu^\la\|_{\dot
H^1}^\frac1{2s}.
\end{align}

Plugging \eqref{equ4.7} and \eqref{equ4.8} into \eqref{equ4.6d1}, we
estimate
\begin{align*}
\|u^\la\|_{L_{t,x}^{5}}\lesssim&\la^{\frac{3}{5}s_c}\|u_0\|_2^\frac3{5}\|u^\la\|_{L_t^\infty
\dot H^\frac12}^\frac{2}{5}\\
\lesssim&\la^{\frac{3}{5}s_c}\|u_0\|_2^\frac3{5}\sup_{s\in[0,T]}\Big(\la^{\frac12s_c}\|u_0\|_2^\frac12\|Iu^\la\|_{\dot
H^1}^\frac12+\la^{s_c-\frac12}\|u_0\|_2^{1-\frac1{2s}}\|Iu^\la\|_{\dot
H^1}^\frac1{2s}\Big)^\frac{2}{5}\\
\lesssim&C(\|u_0\|_2)\la^{\frac{4}{5}s_c}\sup_{s\in[0,T]}\Big(\|Iu^\la\|_{\dot
H^1}^\frac12+\|Iu^\la\|_{\dot H^1}^\frac1{2s}\Big)^\frac{2}{5}
\end{align*}
where we have used the fact  $\la\gg1$ in the last inequality.
Thus, choosing  $C$ sufficiently large depending on $\|u_0\|_2$, we
obtain $T\in\Omega_1$ provided we can prove
\begin{equation}\label{equ4.10}
\sup_{s\in[0,T]}\|Iu^\la\|_{\dot H^1}\leq1, \quad T\in\Omega_2.
\end{equation}
 In fact, let  $\eta>0$ be
sufficiently small constant as in Proposition \ref{almost}, and we
divide $[0,T]$ into
\begin{equation}\label{equ4.11}
L\sim\Big(\frac{\la^{\frac{4}{5}s_c}}\eta\Big)^{5}\sim\la^{4s_c},
\end{equation}
subintervals $I_j=[t_j,t_{j+1}]$ such that
\begin{equation}\label{equ4.12}
\|u^\la\|_{L_{t,x}^{5}(I_j\times\R^2)}\leq\eta.
\end{equation}
Using Proposition \ref{almost} on each interval $I_j,$ we obtain
\begin{equation}\label{equ4.13}
\sup_{t\in[0,T]}E(Iu^\la)(t)\leq E(Iu_0^\la)+LN^{s_c-s}.
\end{equation}
To control the changes of energy during the iteration, we need
$$LN^{s_c-s}\simeq\la^{4s_c}N^{s_c-s}\ll1
.$$
 This  fact together with \eqref{equ4.4} leads to
$$N^{4s_c\frac{s-1}{s_c-s}}N^{s_c-s}\simeq
N^{-\frac{s^2+2s_cs+s_c^2-4s_c}{s-s_c}}\ll1.$$ This may be ensured
by taking $N=N(\|u_0\|_{H^s})$ large enough provided that $s$
satisfies
$$s^2+2s_cs+s_c^2-4s_c>0.$$ This can be verified by $s>s_1$  by the definition of  $s_1$. This completes the bootstrap argument, hence
we prove the claim \eqref{equ4.5},  and moreover,  \eqref{equ4.1} follows.

To deal with $\|u(t)\|_{H^s}$, by the conservation of mass,  and inequalities \eqref{equ2.3} and \eqref{equ4.10}, we estimate
\begin{align*}
\|u(t)\|_{H^s}\lesssim&\|u_0\|_{L^2}+\|u(t)\|_{\dot
H^s}\\
\lesssim&\|u_0\|_{L^2}+\la^{s-s_c}\|u^\la(\la^2t)\|_{\dot
H^s}\\
\lesssim&\|u_0\|_{L^2}+\la^{s-s_c}\|Iu^\la(\la^2t)\|_{H^1}\\
\lesssim&\|u_0\|_{L^2}+\la^{s-s_c}\big(\|u_0^\la\|_2+\|Iu^\la(\la^2t)\|_{\dot
H^1}\big)\\
\lesssim&\|u_0\|_{L^2}+\la^{s-s_c}\big(\la^{s_c}\|u_0\|_2+1\big)\\
\leq&C(\|u_0\|_{H^s}).
\end{align*}
This completes the proof of \eqref{equ4.2}.

\end{proof}

\subsection{Scattering}
We prove that the scattering part of Theorem \ref{theorem} holds for
$H^s_x(\R^2)$ with $s\in(s_0,1)$. We first show that the global
Morawetz estimate can be improved to the global Strichartz estimate
\begin{equation}\label{equ4.14}
\|\langle\nabla\rangle^su\|_{S^0(\R)}:=\sup_{(q,r)\in\Lambda_0}\|\langle\nabla\rangle^su\|_{L_t^qL_x^r(\R\times\R^2)}.
\end{equation}
Second, we use this estimate to show  the asymptotic completeness property. Since the
construction of the wave operator is standard,  we omit it here.

Let $u$ be a global solution to problem \eqref{equ1.1}. From the
interaction Morawetz estimate \eqref{equ4.1}, we have
\begin{equation}\label{equ4.15}
\|u\|_{L_{t,x}^{5}(\R\times\R^2)}\leq C(\|u_0\|_{H^s}).
\end{equation}
Let $\eta>0$ be a small constant to be chosen later and split $\R$
into $L=L(\|u_0\|_{H^s})$ subintervals $I_j=[t_j,t_{j+1}]$ such that
\begin{equation}\label{equ4.16}
\|u\|_{L_{t,x}^{5}(I_j\times\R^2)}\leq\eta.
\end{equation}
Using  \eqref{fraclsfz} and  \eqref{equ4.2}, One  gets
\begin{align}\nonumber
\big\|\langle\nabla\rangle^su\big\|_{S^0(I_j)}\lesssim&\|u(t_j)\|_{H^s}+\big\|\langle\nabla\rangle^s(|u|^pu)
\big\|_{L_{t.x}^\frac{4}{3}(I_j\times\R^2)}\\\label{equ4.17}
\lesssim&C(\|u_0\|_{H^s})+\|u\|_{L_{t,x}^{2p}(I_j\times\R^2)}^p\big\|\langle\nabla\rangle^su\big\|_{L_{t,x}^4(I_j\times\R^2)}.
\end{align}
We use the interpolation and the Sobolev embedding to estimate
\begin{align*}
\|u\|_{L_{t,x}^{2p}(I_j\times\R^2)}^p\lesssim&\|u\|_{L_{t,x}^5(I_j\times\R^2)}^{\theta_2p}\|u\|_{L_{t,x}^{3p}(I_j\times\R^2)}
^{(1-\theta_2)p}\\
\lesssim&\eta^{\theta_2p}\big\||\nabla|^{1-\frac{4}{3p}}u\big\|_{L_t^{3p}L_x^\frac{6p}{3p-2}(I_j\times\R^2)}^{(1-\theta_2)p}\\
\lesssim&\eta^{\theta_2p}\big\|\langle\nabla\rangle^su\big\|_{S^0(I_j)}^{(1-\theta_2)p},\quad  \theta_2=\tfrac{5}{2(3p-5)},
\end{align*}
where $(3p,\tfrac{6p}{3p-2})\in\Lambda_0$, and we have used the fact
$s>\frac{1+s_c}2>1-\frac{4}{3p}.$ Hence,
\begin{equation}\label{equ4.18}
\big\|\langle\nabla\rangle^su\big\|_{S^0(I_j)}\lesssim
C(\|u_0\|_{H^s})+\eta^\frac{5}{2}\big\|\langle\nabla\rangle^su\big\|_{S^0(I_j)}^{p-\frac{5}{2}}.
\end{equation}
By a standard continuity argument, we have
\begin{equation}\label{equ4.19}
\|\langle\nabla\rangle^su\|_{S^0(I_j)}\leq C(\|u_0\|_{H^s}),
\end{equation}
provided that we take $\eta$ sufficiently small depending on the
initial data $\|u_0\|_{H^s}$. Summing over all subintervals $I_j$,
we obtain
\begin{equation}\label{equ4.20}
\|\langle\nabla\rangle^su\|_{S^0(\R)}\leq C(\|u_0\|_{H^s}).
\end{equation}

Finally, we utilize this estimate to show the asymptotic completeness. It suffices
 to prove that there exists a unique $u_\pm$ such that
$$\lim_{t\to\pm\infty}\|u(t)-e^{it\Delta}u_\pm\|_{H^s_x}=0.$$
By time reversal symmetry, we only need  to prove it  for positive
times. For $t>0$, we will show that $v(t):=e^{-it\Delta}u(t)$
converges in $H^s_x$ as $t\to+\infty$, and denote $u_+$ to be the
limit. In fact, we obtain by Duhamel's formula
\begin{equation}\label{equ4.21}
v(t)=u_0-i\int_0^te^{-i\tau\Delta}f(u)(\tau)d\tau.
\end{equation}
Hence, for $0<t_1<t_2$, we have
$$v(t_2)-v(t_1)=-i\int_{t_1}^{t_2}e^{-i\tau\Delta}f(u)(\tau)d\tau.$$
By the Strichartz estimate and \eqref{equ4.18}, we deduce that
\begin{align*}
\|v(t_2)-v(t_1)\|_{H^s(\R^2)}=&\Big\|\int_{t_1}^{t_2}e^{-i\tau\Delta}f(u)(\tau)d\tau\Big\|_{H^s(\R^2)}\\
\lesssim&\|u\|_{L_{t,x}^{5}([t_1,t_2]\times\R^2)}^{\theta_2p}\big\|\langle\nabla\rangle^su\big\|_{S^0([t_1,t_2])}^{1+(1-\theta_2)p}
\\
\to&0\quad \text{as}\quad t_1,~t_2\to\infty.
\end{align*}
Thus, the limit of  \eqref{equ4.21} as $+\infty$  is well
defined. In particular, we find that
$$u_+=u_0-i\int_0^\infty e^{-i\tau\Delta}f(u)(\tau)d\tau$$
is nothing but the asymptotic state. Therefore, we completed  the
proof of Theorem \ref{theorem}.




\section{Proof of Theorem \ref{theorem1.2}}
In this section, we will use the I-method and the improved
interaction Morawetz estimates in Lemma \ref{lemm2.9} to show Theorem
\ref{theorem1.2}. The process is similar to the one in the preceding  section.
First, we define
 \begin{equation}\label{equ3.1a}
 Z_I(t):=\|Iu\|_{Z(t)}=\sup_{(q,r)\in\Lambda_0}\Big(\sum_{N\geq1}\|\nabla
 P_NIu(t)\|_{L_t^qL_x^r([t_0,t)\times\R^2)}^2\Big)^\frac12
 \end{equation}
with the convention that $P_1=P_{\leq1}$.
  Then, we have to control $Z_I(t)$ as follows.

\begin{proposition}[The control of
$Z_I(t)$]\label{zitinc6.1} Let $u(t,x)$ be an $H^s$ solution to
problem \eqref{equ1.1} with $f(u)=|u|^pu$ defined on $[t_0,T]\times\R^2$ and satisfying
\begin{equation}\label{equ3.261}
\|u\|_{L_{t,x}^4([t_0,T]\times\R^2)}\leq\eta
\end{equation}
for some small constant $\eta.$ Assume $E(Iu(t_0))\leq1$. Then for
$s>\tfrac{1+s_c}2$, $p\geq2$ and sufficiently large $N,$ we have for
any $t\in[t_0,T]$
\begin{equation}\label{equ3.361}
Z_I(t)\lesssim \|\nabla Iu(t_0)\|_2+g_1(t)^pZ_I(t)+
N^{-(s-s_c)}Z_I(t)g_1(t)^{p-1}\big[g_1(t)+h_1(t)\big],
\end{equation}
where $g_1(t)$ and $h_1(t)$ are defined by
\begin{equation}\label{equ3.461}
g_1(t)^p=\eta^{2}\sup_{s\in[t_0,t]}E(Iu(s))^{\frac{2p-4}{p(p+2)}}+\eta^{\theta
p}Z_I(t)^{(1-\theta)p}+ N^{-(1-s_c)p}Z_I(t)^p
\end{equation}
with $\theta=\tfrac{2}{3p-4}$ and
\begin{equation}\label{equ3.4.161}
h_1(t)=\eta^{\frac4d}\sup_{s\in[t_0,t]}E(Iu(s))^{\frac{2(p-2)}{p(p+2)}}+Z_I(t).
\end{equation}
\end{proposition}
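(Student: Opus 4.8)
The plan is to follow the proof of Proposition~\ref{zitinc} essentially line by line, systematically replacing the small interaction Morawetz norm $\|u\|_{L^5_{t,x}}$ by the small improved interaction Morawetz norm $\|u\|_{L^4_{t,x}}$ furnished by Lemma~\ref{lemm2.9}, so that hypothesis \eqref{equ3.261} plays here the role that \eqref{equ3.2} played there. First I would apply the operator $I$ to \eqref{equ1.1} and use the Strichartz estimate of Proposition~\ref{prop1} (with the admissible pair $(4,4)$) together with the Littlewood--Paley square function to obtain, exactly as in \eqref{equ3.4},
\[
Z_I(t)\lesssim \|\nabla Iu(t_0)\|_2+II_1+II_2,
\]
where $II_1:=\big\|(\nabla Iu)f'(u)\big\|_{L^{4/3}_{t,x}}$ and $II_2:=\big\|\nabla If(u)-(\nabla Iu)f'(u)\big\|_{L^{4/3}_{t,x}}$, all spacetime norms being computed over $[t_0,t)\times\R^2$.

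For $II_1$, H\"older's inequality yields $II_1\lesssim Z_I(t)\|u\|_{L^{2p}_{t,x}}^p$ as in \eqref{equ3.5}, and the only substantive modification is the bound for $\|u\|_{L^{2p}_{t,x}}^p$ obtained from the decomposition $u=u_{\le1}+u_{1\le\cdot\le N}+u_{>N}$. On the low block I would interpolate between $L^4_{t,x}$ and $L^\infty_{t,x}$ --- the interpolation exponent is $2/p$, which lies in $(0,1]$ exactly when $p\ge2$ --- and then pass from $L^\infty_x$ to $L^{p+2}_x$ by Bernstein, controlling the result by $\sup_{s\in[t_0,t]}E(Iu(s))$; this is the first summand of $g_1(t)^p$. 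On the medium block I would interpolate between $L^4_{t,x}$ and $L^{3p}_{t,x}$ with exponent $\theta=\tfrac{2}{3p-4}\in(0,1]$, apply the Sobolev embedding $\|g\|_{L^{3p}_x(\R^2)}\lesssim\big\||\nabla|^{1-4/(3p)}g\big\|_{L^{6p/(3p-2)}_x(\R^2)}$ with $(3p,\tfrac{6p}{3p-2})\in\Lambda_0$, and replace $|\nabla|^{1-4/(3p)}$ by $\nabla$ via Bernstein on frequencies $\ge1$ to reach $Z_I(t)$; this is the second summand. On the high block, the Sobolev embedding and \eqref{equ2.2} with $\sigma=s_c$ give $N^{-(1-s_c)p}Z_I(t)^p$. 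Summing the three contributions gives $\|u\|_{L^{2p}_{t,x}}^p\lesssim g_1(t)^p$, hence $II_1\lesssim Z_I(t)g_1(t)^p$.

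For $II_2$ I would repeat the chain of inequalities \eqref{equ3.6.1d1}: since $s>\tfrac{1+s_c}2$, one may take $\nu=2s-s_c-1\in(0,s)$ in \eqref{equ2.5}, so that $II_2\lesssim N^{-(s-s_c)}Z_I(t)\big\|\langle\nabla\rangle^{s-s_c}f'(u)\big\|_{L^2_{t,x}}$; then the fractional product and chain rules of Lemma~\ref{fensc}, together with the bound just established for $\|u\|_{L^{2p}_{t,x}}^p$, reduce this to $N^{-(s-s_c)}Z_I(t)g_1(t)^{p-1}\big[g_1(t)+\big\||\nabla|^{s-s_c}u\big\|_{L^{2p}_{t,x}}\big]$. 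The remaining factor $\big\||\nabla|^{s-s_c}u\big\|_{L^{2p}_{t,x}}$ is bounded by $h_1(t)$ through the same three-piece decomposition used in \eqref{equbuch}, now using $s-s_c<1$, Bernstein, and \eqref{equ2.2} on the high block. Collecting the estimates for $II_1$ and $II_2$ yields \eqref{equ3.361}.

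The only part that genuinely requires care is the bookkeeping of the new interpolation exponents: because $L^4_{t,x}$ is closer to $L^2_{t,x}$ than $L^5_{t,x}$, less of the small parameter $\eta$ is available, and one must check that the low- and medium-frequency exponents $2/p$ and $\tfrac{2}{3p-4}$ genuinely lie in $(0,1]$ --- which is precisely why the hypothesis relaxes to $p\ge2$, weaker than the $p\ge\tfrac52$ of Proposition~\ref{zitinc} --- and that $(3p,\tfrac{6p}{3p-2})$ is Schr\"odinger admissible in the sense of Definition~\ref{def1}. No ingredient beyond Lemmas~\ref{fensc} and~\ref{commutor} and the Strichartz estimate enters, and the rest of the argument is structurally identical to Section~3.
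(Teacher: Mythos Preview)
Your proposal is correct and takes exactly the same approach as the paper: the paper's own proof consists of a single remark that one repeats the argument of Proposition~\ref{zitinc} verbatim, replacing $\|u\|_{L_{t,x}^{5}}$ by $\|u\|_{L_{t,x}^{4}}$ in estimates \eqref{equ3.1d}, \eqref{equ3.1d1} and \eqref{equbuch}, which relaxes the constraint to $p\ge2$. Your detailed bookkeeping of the new interpolation exponents ($\theta_1=2/p$ on the low block and $\theta=\tfrac{2}{3p-4}$ on the medium block) is precisely the computation behind that remark.
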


\begin{proof}
The proof is similar to the proof of Proposition \ref{zitinc}. The only
difference is that we use $\|u\|_{L_{t,x}^4}$ instead of
$\|u\|_{L_{t,x}^{5}}$ in estimates \eqref{equ3.1d},
\eqref{equ3.1d1} and \eqref{equbuch}. In this way,  one  can relax the restriction of $p$,
and to obtain estimate \eqref{equ3.361}
 for  $p\geq2$.

\end{proof}

Next, we show the energy increment of $E(Iu)(t)$.

\begin{proposition}[Energy increment]\label{enerin61}
Let $u(t,x)$ be an $H^s$ solution to problem \eqref{equ1.1} with
$f(u)=|u|^pu$ defined on $[t_0,T]\times\R^2$ and satisfying
\begin{equation}\label{equ3.261a}
\|u\|_{L_{t,x}^4([t_0,T]\times\R^2)}\leq\eta
\end{equation}
for some small constant $\eta.$ Assume $E(Iu(t_0))\leq1$. Then for
$s\geq\max\{\tfrac{p}{p+1},\tfrac{1+s_c}2\}$, $p\geq2$ and
sufficiently large $N,$ we have for any $t\in[t_0,T]$
\begin{align}\label{equ3.761}
&\big|\sup_{s\in[t_0,t]}E(Iu(s))-E(Iu(t_0))\big|\\\nonumber
\lesssim&N^{-(1-s_c)}Z_I(t)^3g_1(t)^{p-1}+N^{-(s-s_c)}Z_I(t)^2g_1(t)^{p-1}\big[g_1(t)+h_1(t)\big]\\\nonumber
&+ k_1(t)\Big\{N^{-1}g_1(t)^pZ_I(t)+\eta
N^{-(s-s_c)}g_1(t)^{p-1}\big( g_1(t)+h_1(t)\big)\Big\},
\end{align}
where $g_1(t),~h_1(t)$ are defined as in Proposition
\ref{zitinc6.1}, and $k_1(t)$ is defined to be
\begin{align*}
k_1(t)=~\eta\sup_{s\in[t_0,t]}E(Iu(s))^{\frac{p}{p+2}}+
Z_I(t)^{p+1}.\end{align*}

\end{proposition}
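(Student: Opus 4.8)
The plan is to rerun the three-term splitting of the modified-energy increment from the proof of Proposition~\ref{enerin}, with the $L^5_{t,x}$ interaction Morawetz norm replaced throughout by the $L^4_{t,x}$ norm furnished by Lemma~\ref{lemm2.9}. First I would reproduce the identity of Proposition~\ref{enerin}: since $iIu_t+\Delta Iu=If(u)$, one integration by parts gives $E(Iu(t))-E(Iu(t_0))=II_1+II_2+II_3$, where $II_1$ collects the term with $f'(Iu)-f'(u)$, $II_2$ the commutator $(\nabla Iu)f'(u)-I(f'(u)\nabla u)$, and $II_3$ the term with $\overline{If(u)}\,(f(Iu)-If(u))$. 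The estimates of $II_1$ and $II_2$ then carry over essentially verbatim from \eqref{equ3.8}--\eqref{equ3.9}: H\"older with exponents $(4,4,2)$, the Bernstein-type bound \eqref{equ2.2} with $\sigma=s_c$ to extract a factor $N^{-(1-s_c)}$, the commutator bound \eqref{equ2.5} for $II_2$ with $\nu=2s-s_c-1\in(0,s)$ (admissible since $s>\tfrac{1+s_c}2$, and since $s<1\le 1+s_c$), and the control of $\|u\|_{L^{2p}_{t,x}}$ by $g_1(t)$ and of $\||\nabla|^{s-s_c}u\|_{L^{2p}_{t,x}}$ by $h_1(t)$ already established in Proposition~\ref{zitinc6.1} (the $L^4$-analogues of \eqref{gujiagj} and \eqref{equbuch}). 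This produces the first two terms on the right of \eqref{equ3.761}.

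For $II_3$ I would apply H\"older to get $|II_3|\lesssim\|If(u)\|_{L^4_{t,x}}\,\|f(Iu)-If(u)\|_{L^{4/3}_{t,x}}$, bound $\|If(u)\|_{L^4_{t,x}}\lesssim\|u\|_{L^{4(p+1)}_{t,x}}^{p+1}\lesssim k_1(t)$ by interpolating the $L^{4(p+1)}_{t,x}$ norm between $L^4_{t,x}$ and an admissible Lebesgue space (Bernstein together with \eqref{equ2.2} with $\sigma=\tfrac{p}{p+1}$, exactly as in \eqref{equ3.11}; this is where $s\ge\tfrac{p}{p+1}$ enters), and split $f(Iu)-If(u)=Iu(|Iu|^p-|u|^p)+\big((Iu)|u|^p-I(|u|^pu)\big)$. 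The first summand is dispatched as in \eqref{equ3.12} via \eqref{equ2.2} with $\sigma=0$, yielding $\lesssim N^{-1}g_1(t)^pZ_I(t)$. For the second summand comes the one substantive change relative to Proposition~\ref{enerin}: rather than the choice $q=\tfrac{5p}{11}$ dictated by the $L^5$ norm, I would use the commutator estimate \eqref{equ2.4} placing $\|Iu\|$ in $L^4_{t,x}$ and $|u|^p$ in $L^2_{t,x}$ (i.e.\ $r_1=4$, $r_2=2$, so that $\tfrac34=\tfrac14+\tfrac12$) and again $\nu=2s-s_c-1$, so that the loss is $N^{-(1-s+\nu)}=N^{-(s-s_c)}$ and
\begin{equation*}
\big\|(Iu)|u|^p-I(|u|^pu)\big\|_{L^{4/3}_{t,x}}\lesssim N^{-(s-s_c)}\,\|Iu\|_{L^4_{t,x}}\,\big\|\langle\nabla\rangle^{s-s_c}|u|^p\big\|_{L^2_{t,x}}\lesssim\eta\,N^{-(s-s_c)}g_1(t)^{p-1}\big[g_1(t)+h_1(t)\big],
\end{equation*}
using $\|Iu\|_{L^4_{t,x}}\lesssim\|u\|_{L^4_{t,x}}\le\eta$ by \eqref{equ2.1}, the fractional chain rule \eqref{fraclsfz} applied to $|u|^p$ (licit for $p\ge2$) with the H\"older split $\tfrac12=\tfrac{p-1}{2p}+\tfrac1{2p}$, and once more the $g_1,h_1$ control of $\|u\|_{L^{2p}_{t,x}}$ and $\||\nabla|^{s-s_c}u\|_{L^{2p}_{t,x}}$. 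Collecting these into $|II_3|\lesssim k_1(t)\big\{N^{-1}g_1(t)^pZ_I(t)+\eta N^{-(s-s_c)}g_1(t)^{p-1}(g_1(t)+h_1(t))\big\}$ and adding the contributions of $II_1$ and $II_2$ yields \eqref{equ3.761}.

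Conceptually there is nothing new beyond Proposition~\ref{enerin}; the work is purely in the bookkeeping of interpolation and admissibility exponents once the small spacetime norm is $L^4_{t,x}$ rather than $L^5_{t,x}$. The point to watch is that the H\"older split $\tfrac34=\tfrac14+\tfrac12$ in the commutator estimate forces $|u|^p$ into $L^2_{t,x}$, which is precisely the exponent the fractional chain rule can absorb, and that the interpolations defining $g_1,h_1,k_1$ (the $L^4$-versions of \eqref{equ3.1d}, \eqref{equ3.1d1}, \eqref{equ3.11}) now close for every $p\ge2$ rather than only $p\ge\tfrac{11}4$. This relaxation is the whole purpose of invoking the improved interaction Morawetz estimate \eqref{equ2.15}, and it is the only place where the hypotheses required here are genuinely weaker than those of Proposition~\ref{enerin}.
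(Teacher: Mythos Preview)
Your proposal is correct and follows essentially the same route as the paper: the paper's proof also reruns the three-term splitting $II_1+II_2+II_3$ from Proposition~\ref{enerin} with $L^4_{t,x}$ in place of $L^5_{t,x}$, and singles out exactly the commutator piece $\|(Iu)|u|^p-I(|u|^pu)\|_{L^{4/3}_{t,x}}$ as the one place requiring a different H\"older split, using \eqref{equ2.4} with $r_1=4$, $r_2=2$ and $\nu=2s-s_c-1$ to obtain the $\eta\,N^{-(s-s_c)}g_1^{p-1}(g_1+h_1)$ bound you wrote.
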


\begin{proof}
The proof is similar to the proof of  Proposition \ref{enerin}. In fact, we use
$\|u\|_{L_{t,x}^4}$ instead of $\|u\|_{L_{t,x}^{5}}$ in estimates
\eqref{equ3.8}, \eqref{equ3.9}, \eqref{equ3.11} and \eqref{equ3.12}.
However,   we estimate \eqref{equ3.13} in a different way as
follows. By the assumption $s>\tfrac{1+s_c}2$, we have
$\nu:=2s-s_c-1\in(0,s)$.
 We obtain, by the same argument as deriving \eqref{equ3.6.1d1},
 the following estimate
\begin{align}\nonumber
&\big\|(Iu)|u|^p-I(|u|^pu)\big\|_{L_{t,x}^\frac{4}{3}}\\
\nonumber
\lesssim&N^{-(s-s_c)}\|Iu\|_{L_{t,x}^{4}}\big\|\langle\nabla\rangle^{s-s_c}
|u|^p\big\|_{L_{t,x}^{2}}\\
\nonumber
\lesssim&N^{-(s-s_c)}\eta\|u\|_{L_{t,x}^{2p}}^{p-1}
\Big(\|u\|_{L_{t,x}^{2p}}+\big\||\nabla|^{
s-s_c}u\big\|_{L_{t,x}^{2p}}\Big)\\
\lesssim&\eta
N^{-(s-s_c)}g_1(t)^{p-1}\Big( g_1(t)+h_1(t)\Big).\label{equ3.1361}
\end{align}
\end{proof}

Combining the above two propositions, a standard bootstrap argument
and the same argument as in the proof of Proposition \ref{almost}, we can show that
the quantity $E(Iu)(t)$ is ``almost conserved" in the following sense.

\begin{proposition}[Almost conservation law]\label{almost61}
Let $u(t,x)$ be an $H^s$ solution to problem \eqref{equ1.1} with
$f(u)=|u|^pu$ defined on $[t_0,T]\times\R^2$  and satisfying
\begin{equation}\label{equ3.2.161}
\|u\|_{L_{t,x}^4([t_0,T]\times\R^2)}\leq\eta
\end{equation}
for some small constant $\eta.$ Assume $E(Iu(t_0))\leq1$. Then for
$s\geq\max\big\{\tfrac{1+s_c}2,~\tfrac{p}{p+1}\big\}$, $p\geq2$ and
sufficiently large $N,$ we have
\begin{align}\label{equ3.3.161}
E(Iu)(t)=E(Iu(t_0))+O(N^{s_c-s}).
\end{align}

\end{proposition}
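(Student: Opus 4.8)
The plan is to establish \eqref{equ3.3.161} by running the standard bootstrap argument exactly as in the proof of Proposition \ref{almost}, but now feeding in the two preceding propositions (Propositions \ref{zitinc6.1} and \ref{enerin61}) that were tailored to the $L^4_{t,x}$ smallness hypothesis \eqref{equ3.2.161}. First I would observe that, as in the earlier case, it suffices to prove the a priori bounds
$$Z_I(t)\lesssim\|\nabla Iu(t_0)\|_2\quad\text{and}\quad\sup_{s\in[t_0,t]}E(Iu(s))\lesssim E(Iu(t_0))\lesssim1,\qquad\forall\,t\in[t_0,T],$$
since plugging these back into \eqref{equ3.761} immediately collapses the right-hand side to a quantity of size $O(N^{s_c-s})$ (every term on the right of \eqref{equ3.761} carries at least one negative power $N^{-(1-s_c)}$, $N^{-(s-s_c)}$, or $N^{-1}$, and by the inequality $s-s_c\le\min\{1-s_c,1\}$ noted before Proposition \ref{almost} the governing power is $N^{s_c-s}$). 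So the whole content is the pair of a priori bounds.

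Next I would set up the bootstrap sets
$$\Omega_1:=\big\{t\in[t_0,T]:\ Z_I(t)\le C_1\|\nabla Iu_0\|_2,\ \sup_{s\in[t_0,t]}E(Iu(s))\le C_2E(Iu(t_0))\big\},$$
$$\Omega_2:=\big\{t\in[t_0,T]:\ Z_I(t)\le 2C_1\|\nabla Iu_0\|_2,\ \sup_{s\in[t_0,t]}E(Iu(s))\le 2C_2E(Iu(t_0))\big\},$$
with $C_1,C_2$ large constants depending only on the Strichartz constants. One then verifies the three standard properties: $\Omega_1$ is nonempty ($t_0\in\Omega_1$) and closed (by Fatou's lemma and continuity of $t\mapsto Z_I(t)$, $t\mapsto E(Iu(t))$); property (3), that $\Omega_1$ is open to the right, follows from (2) together with the local well-posedness theory; and the crux is (2), $\Omega_2\subset\Omega_1$. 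For (2), one takes $t\in\Omega_2$ and inserts the bounds $Z_I(t)\le 2C_1\|\nabla Iu_0\|_2$, $\sup E(Iu)\le 2C_2$ into the conclusions \eqref{equ3.361} of Proposition \ref{zitinc6.1} and \eqref{equ3.761} of Proposition \ref{enerin61}; since $g_1(t)$, $h_1(t)$, $k_1(t)$ are then all $O(1)$ and additionally small in $\eta$ (the leading terms in \eqref{equ3.461}, \eqref{equ3.4.161} and $k_1(t)$ all carry positive powers of $\eta$), and since the remaining terms carry negative powers of $N$, choosing $\eta$ small and $N$ large — depending on $C_1,C_2,E(Iu(t_0))$ — upgrades the $\Omega_2$-bounds to the $\Omega_1$-bounds.

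The main obstacle, such as it is, is bookkeeping: one must check that in every term of \eqref{equ3.361} and \eqref{equ3.761} either a genuine power of $\eta$ or a genuine negative power of $N$ appears, so that the strict improvement from factor $2C_1,2C_2$ down to $C_1,C_2$ is actually achieved rather than merely matched. This is where the hypotheses $p\ge2$ and $s\ge\max\{\tfrac{1+s_c}2,\tfrac{p}{p+1}\}$ get used, through Propositions \ref{zitinc6.1} and \ref{enerin61} — in particular the inequality $s>\tfrac{1+s_c}2$ guarantees $\nu=2s-s_c-1\in(0,s)$ so that the commutator estimates of Lemma \ref{commutor} apply, and $s\ge\tfrac{p}{p+1}$ is what made the high-frequency estimate \eqref{equ3.11}-type term in Proposition \ref{enerin61} close. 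Once (1)-(3) are verified, connectedness of $[t_0,T]$ forces $\Omega_1=[t_0,T]$, the a priori bounds hold, and substituting them into \eqref{equ3.761} yields \eqref{equ3.3.161}. Since all of this is word-for-word parallel to the proof of Proposition \ref{almost}, I would present it by pointing to that argument and only indicating the role of the modified quantities $g_1,h_1,k_1$.
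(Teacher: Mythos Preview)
Your proposal is correct and follows essentially the same approach as the paper: the paper's proof of Proposition \ref{almost61} is just the one-line remark that one combines Propositions \ref{zitinc6.1} and \ref{enerin61} with the standard bootstrap argument from the proof of Proposition \ref{almost}. Your write-up is in fact more detailed than what the paper provides, and your observation that the relevant negative powers in \eqref{equ3.761} are now $N^{-(1-s_c)}$, $N^{-(s-s_c)}$, and $N^{-1}$ (the $N^{-(s-\frac12+\frac1q)}$ term having been replaced by another $N^{-(s-s_c)}$ via \eqref{equ3.1361}) correctly identifies why the governing decay is $N^{s_c-s}$.
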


Now we turn to prove Theorem \ref{theorem1.2}.

\vskip 0.2cm

\noindent{\bf The proof of Theorem \ref{theorem1.2}:} Assume $u$ is
a solution to problem \eqref{equ1.1} with $f(u)=|u|^pu$, then so is
\begin{equation}\label{equ4.361}
u^\la(t,x)=\la^{-\frac2p}u\big(\tfrac{t}{\la^2},\tfrac{x}{\la}\big).
\end{equation}
Choosing  a sufficiently large $\la$ depending on
$\|u_0\|_{H^s}$ and $N$ such that
\begin{equation}\label{equ4.461}
N^{1-s}\la^{s_c-s}\|u_0\|_{\dot H^s}\ll1\quad\text{and}\quad
\la^{-\frac2p+\frac{2}{p+2}}\|u_0\|_{H^s}\ll1,
\end{equation}
we get
\begin{equation}
E(Iu_0^\la)=\tfrac12\|\nabla
 Iu_0^\la\|_{L^2}^2+\tfrac1{p+2}\|Iu_0^\la\|_{L^{p+2}}^{p+2}\ll1.
\end{equation}

Next we claim that for any  arbitrary large  $T_0>0$, there exists
an absolute constant $C$ such that
\begin{equation}\label{equ4.561}
\|u^\la\|_{L_{t,x}^{4}([0,\la^2T_0]\times\R^2)}\leq
C\la^{s_c}(\la^2T_0)^\frac{1}{12}.
\end{equation}
 We prove
this claim by the standard bootstrap argument. Let us define
$$\Omega_1:=\big\{t\in[0,\la^2T_0]:~\|u^\la\|_{L_{t,x}^{4}([0,t]\times\R^2)}\leq C\la^{s_c}t^\frac{1}{12}\big\}.$$
We want to show $\Omega_1=[0,\la^2T_0]$. Let
$$\Omega_2:=\big\{t\in[0,\la^2T_0]:~\|u^\la\|_{L_{t,x}^{4}([0,t]\times\R^2)}\leq 2C\la^{s_c}t^\frac{1}{12}\big\}.$$
By the same argument as deriving Proposition \ref{promain}, it suffices  to
prove that  for any  $T\in\Omega_2$, we have $T\in\Omega_1$. Throughout the following proof,  all
spacetime norms will be computed on $[0,T]\times\R^2$.

\vskip0.12cm

Using the interaction Morawetz estimate and the mass conservation,
we get
\begin{align}
\|u^\la\|_{L_{t,x}^{4}}^{4}\lesssim&T^\frac13\|u_0^\la\|_2^2\|u^\la\|_{L_t^\infty
\dot H^\frac12}^2+T^\frac13\|u_0^\la\|_2^4\nonumber\\
\lesssim&T^\frac13\la^{{2s_c}}\|u_0\|_2^2\|u^\la\|_{L_t^\infty \dot
H^\frac12}^2+T^\frac13\la^{4s_c}\|u_0\|_2^4. \label{equ4.6d161}
\end{align}
From \eqref{equ4.7} and \eqref{equ4.8}, we have the control of  $\|u^\la(t)\|_{L_t^\infty\dot H^\frac12}$ as follows
\begin{align*}
\|u^\la(t)\|_{L_t^\infty\dot H^\frac12}\lesssim&\big\|P_{\leq
N}u^\la\big\|_{\dot H^\frac12}+\big\|P_{>N}u^\la\big\|_{\dot
H^\frac12}\\
\lesssim&\la^{\frac12s_c}\|u_0\|_2^\frac12\|Iu^\la\|_{\dot
H^1}^\frac12+\la^{s_c-\frac12}\|u_0\|_2^{1-\frac1{2s}}\|Iu^\la\|_{\dot
H^1}^\frac1{2s}.
\end{align*}
Plugging this into \eqref{equ4.6d161}, we estimate
\begin{align*}
\|u^\la\|_{L_{t,x}^{4}}^4\lesssim&_{\|u_0\|_2}T^\frac13\la^{{2s_c}}\|u^\la\|_{L_t^\infty
\dot
H^\frac12}^2+T^\frac13\la^{4s_c}\\
\lesssim&_{\|u_0\|_2}T^\frac13\la^{3s_c}\sup_{s\in[0,T]}\Big(\|Iu^\la\|_{\dot
H^1}+\la^{s_c-1}\|Iu^\la\|_{\dot
H^1}^\frac1{s}\Big)+T^\frac13\la^{4s_c}\\
\leq&C(\|u_0\|_2)T^\frac13\Big[\la^{3s_c}\sup_{s\in[0,T]}\big(\|Iu^\la\|_{\dot
H^1}+\|Iu^\la\|_{\dot H^1}^\frac1{s}\big)+\la^{4s_c}\Big],
\end{align*}
where we use the fact that $\la\gg1$ in the last inequality.
Thus, choosing $C$ sufficiently large depending on $\|u_0\|_2$, we
obtain $T\in\Omega_1$ provided we can deduce
\begin{equation}\label{equ4.1061}
\sup_{s\in[0,T]}\|Iu^\la\|_{\dot H^1}\leq1, \quad  T\in\Omega_2.
\end{equation}
 In fact, let $\eta>0$ be
sufficiently small constant as in Proposition \ref{almost61}, and we
divide $[0,T]$ into
\begin{equation}\label{equ4.11a}
L\sim\frac{T^\frac13\la^{4s_c}}{\eta^4}
\end{equation}
subintervals $I_j=[t_j,t_{j+1}]$ such that
\begin{equation}\label{equ4.1261}
\|u^\la\|_{L_{t,x}^{4}(I_j\times\R^2)}\leq\eta.
\end{equation}
Using Proposition \ref{almost61} on each interval $I_j,$ we
obtain
\begin{equation}\label{equ4.1361}
\sup_{t\in[0,T]}E(Iu^\la)(t)\leq E(Iu_0^\la)+LN^{s_c-s}.
\end{equation}
To control  small energy during the iteration, we need
$$LN^{s_c-s}\simeq T^\frac13\la^{4s_c}N^{s_c-s}\ll1
.$$
This property together with \eqref{equ4.461} and $T\leq\la^2T_0$ leads to
\begin{equation}\label{relat}
T_0^\frac13N^{(\frac23+4s_c)\frac{1-s}{s-s_c}-(s-s_c)}=T_0^\frac13N^{-\frac{3(s-s_c)^2-2(1+6s_c)(1-s)}{3(s-s_c)}}\ll1
\end{equation}
 by choosing $N=N(\|u_0\|_{H^s},T_0)$ large enough
provided that $s$ satisfies
$$3(s-s_c)^2-2(1+6s_c)(1-s)>0,$$ i.e. $s>\tilde s_1$, where $\tilde s_1$ is the positive root of the
quadratic equation
$$3(s-s_c)^2-2(1+6s_c)(1-s)=0.$$ This completes the bootstrap argument and hence
the claim \eqref{equ4.561}.

To estimate $\|u(t)\|_{H^s}$, by the conservation of mass, \eqref{equ2.3} and \eqref{equ4.1061}, we get for
$t\in[0,T_0]$
\begin{align*}
\|u(t)\|_{H^s}\lesssim&\|u_0\|_{L^2}+\|u(t)\|_{\dot
H^s}\\
\lesssim&\|u_0\|_{L^2}+\la^{s-s_c}\|u^\la(\la^2t)\|_{\dot
H^s}\\
\lesssim&\|u_0\|_{L^2}+\la^{s-s_c}\|Iu^\la(\la^2t)\|_{H^1}\\
\lesssim&\|u_0\|_{L^2}+\la^{s-s_c}\big(\|u_0^\la\|_2+\|Iu^\la(\la^2t)\|_{\dot
H^1}\big)\\
\lesssim&\|u_0\|_{L^2}+\la^{s-s_c}\big(\la^{s_c}\|u_0\|_2+1\big)\\
\leq&C(\|u_0\|_{H^s})(1+\la^s)\\
\leq&C(\|u_0\|_{H^s})(1+T_0)^{\frac{1-s}{3(s-s_c)^2-2(1+6s_c)(1-s)}+},
\end{align*}
where we use the relationship \eqref{equ4.461} and \eqref{relat} in
the last inequality. This completes the proof of Theorem
\ref{theorem1.2}.




\section{Proof of Theorem \ref{thm1.2}}

In this section, we consider the Cauchy problem for the nonlinear Schr\"odinger equation
\begin{align} \label{equ1.1.1}
\begin{cases}    (i\partial_t+\Delta)u= f(u)=|u|^pu+|u|^{2k}u,\quad
(t,x)\in\R\times\R^2,
\\
u(0,x)=u_0(x)\in H^s(\R^2).
\end{cases}
\end{align}
If $u(t,x)$ is the solution to \eqref{equ1.1.1}, then
$$u^\la(t,x)=\la^{-\frac1k}u\big(\tfrac{t}{\la^2},
\tfrac{x}{\la}\big)$$ is the solution to
\begin{align} \label{equ1.1.2}
\begin{cases}
(i\partial_t+\Delta)u^\la=\la^{-2+\frac{p}k}|u^\la|^pu^\la+|u^\la|^{2k}u^\la
\\
u^\la(0,x)=\la^{-\frac1k}u_0\big(\frac{x}\la\big).
\end{cases}
\end{align}
The energy $E(u^\la)$ is defined by
\begin{equation*}\label{equ5.1}
E(u^\la)(t)=\frac12\int|\nabla
u^\la(t,x)|^2dx+\frac1{2(k+1)}\int|u^\la(t,x)|^{2(k+1)}dx+\frac{\la^{-2+\frac{p}k}}{p+2}\int|u^\la(t,x)|^{p+2}dx.
\end{equation*}
For given $u_0\in H^s(\R^2)$, we have
\begin{align*}
\|\nabla Iu_0^\la\|_2\leq N^{1-s}\|u_0^\la\|_{\dot
H^s}=N^{1-s}\la^{1-\frac1k-s}\|u_0\|_{\dot H^s},\\
\la^{-2+\frac{p}k}\|u_0^\la\|_{p+2}^{p+2}=\la^{-2+\frac{p}k}\la^{-\frac{p+2}k+2}\|u_0\|_{p+2}^{p+2}\lesssim\la^{-\frac{2}k
}\|u_0\|_{H^s_x}^{p+2},\\
\|u_0^\la\|_{2k+2}=\la^{-\frac1k+\frac{1}{k+1}}\|u_0\|_{2k+2}\lesssim\la^{-\frac1k+\frac{1}{k+1}}\|u_0\|_{H^s_x}.
\end{align*}
As $s>1-\tfrac1k$, choosing $\la$ sufficiently large depending on
$\|u_0\|_{H^s}$ and $N$ such that
\begin{equation}\label{equ5.3.1.1} N^{1-s}\la^{1-\frac1k-s}\|u_0\|_{\dot
H^s}\ll1~~\text{and}~~\la^{-\frac1k+\frac{1}{k+1}}\|u_0\|_{H^s_x}\ll1,
\end{equation} we obtain
\begin{equation}\label{equ5.41}
E(Iu_0^\la)\leq1.
\end{equation}

\subsection{Almost conservation law}
 Let us define $Z_I(t)$ by
 \begin{equation}\label{equ3.1.12}
 Z_I(t):=\|Iu\|_{Z(t)}=\sup_{(q,r)\in\Lambda_0}\Big(\sum_{N\geq1}\|\nabla
 P_NIu(t)\|_{L_t^qL_x^r([t_0,t)\times\R^2)}^2\Big)^\frac12.
 \end{equation}
Moreover, we denote
$$s_c^{(1)}=1-\tfrac2p,~~s_c^{(2)}=1-\tfrac1k,~~s_c^{(1)}<s_c^{(2)}.$$

\begin{proposition}\label{zitinc5.1}
Let $u(t,x)$ be an $H^s$ solution to problem \eqref{equ1.1.2} defined on
$[t_0,T]\times\R^2$ and satisfying
\begin{equation}\label{equ5.2}
\|u\|_{L_{t,x}^{5}([t_0,T]\times\R^2)}\leq\eta
\end{equation}
for some small constant $\eta.$ Assume $E(Iu(t_0))\leq1$. Then for
sufficiently large $N,$
$$s>\max\big\{s_c^{(1)},\tfrac{1+s_c^{(1)}}2,s_c^{(2)}\big\}=\max\big\{\tfrac{1+s_c^{(1)}}2,s_c^{(2)}\big\},
~2k>p\geq\tfrac{5}{2},$$ and $k$ is an integer number larger than one,
\begin{align}Z_I(t)\lesssim \|\nabla Iu_0\|_2+\la^{-2+\frac{p}k}\Big[\tilde
g(t)^pZ_I(t)+ N^{-(s-s_c)}Z_I(t)\tilde g(t)^{p-1}\big(\tilde
g(t)+\tilde h(t)\big)\Big]+g_k(t)Z_I(t). \label{equ5.3}
\end{align}
where $\tilde g(t)$, $\tilde h(t)$ and $g_k(t)$ are defined by
\begin{align*}
\begin{cases}
 \tilde
g(t)^p=&\eta^{\theta_1p}\la^{(2-\frac{p}k)\frac{(1-\theta_1)p}{p+2}}\sup\limits_{s\in[t_0,t]}
E(Iu(s))^{\frac{(1-\theta_1)p}{p+2}}+\eta^{\theta_2p}Z_I(t)^{(1-\theta_2)p}+
N^{-(1-s_c^{(1)})p}Z_I(t)^p\\
\tilde
h(t)=&\eta^{\theta_1}\la^{(2-\frac{p}k)\frac{1-\theta_1}{p+2}}\sup\limits_{s\in[t_0,t]}E(Iu(s))^{\frac{1-\theta_1}{p+2}}+Z_I(t)\\
g_k(t)=&\eta^{2k\theta_1}
\sup\limits_{s\in[t_0,t]}E(Iu(s))^\frac{(1-\theta_1)k}{k+1}+\eta^{2k\theta_2}Z_I(t)^{2k(1-\theta_2)}
+N^{-2k(1-s_c^{(2)})}Z_I(t)^{2k}\end{cases}
\end{align*}
with $\theta_1,~\theta_2\in(0,1)$ defined as in Proposition
\ref{zitinc}.
\end{proposition}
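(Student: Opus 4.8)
The plan is to imitate the proof of Proposition \ref{zitinc}, treating the two nonlinear terms of \eqref{equ1.1.2} separately. Applying $I$ to \eqref{equ1.1.2} and using the Strichartz estimate \eqref{strest} with the admissible pair $(4,4)$ (dual exponent $\tfrac43$ in both variables), one gets, with all space-time norms taken on $[t_0,t)\times\R^2$,
\begin{equation*}
Z_I(t)\lesssim\|\nabla Iu_0\|_2+\la^{-2+\frac pk}\big\|\nabla I(|u|^pu)\big\|_{L_{t,x}^{4/3}}+\big\|\nabla I(|u|^{2k}u)\big\|_{L_{t,x}^{4/3}},
\end{equation*}
so it remains to bound the two nonlinear contributions, using the smallness hypothesis \eqref{equ5.2} and $E(Iu(t_0))\leq1$ exactly as in Section~3.

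For the lower-order term $|u|^pu$ the estimate is word for word that of Proposition \ref{zitinc} with $s_c$ replaced by $s_c^{(1)}$ (legitimate since $p\geq\tfrac52$ and $s>\tfrac{1+s_c^{(1)}}2$, which gives $\nu:=2s-s_c^{(1)}-1\in(0,s)$): split $\nabla I(|u|^pu)=(\nabla Iu)f'(u)+\big[\nabla I(|u|^pu)-(\nabla Iu)f'(u)\big]$, bound the first summand by $Z_I(t)\|u\|_{L_{t,x}^{2p}}^p$ and the commutator by \eqref{equ2.5}--\eqref{equ2.6}, and in both estimate $\|u\|_{L_{t,x}^{2p}}^p$ and $\big\||\nabla|^{s-s_c^{(1)}}u\big\|_{L_{t,x}^{2p}}$ by the decomposition $u=u_{\leq1}+u_{1\leq\cdot\leq N}+u_{>N}$ together with interpolation against $\|u\|_{L_{t,x}^5}\leq\eta$, Sobolev embedding, Bernstein, admissibility of $\big(3p,\tfrac{6p}{3p-2}\big)$, and \eqref{equ2.2} with $\sigma=s_c^{(1)}$ on the high piece. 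The one new point is that the energy of Section~5 carries the potential $\|Iu\|_{p+2}^{p+2}$ with the weight $\tfrac{\la^{-2+p/k}}{p+2}$, so on the low piece $\|u_{\leq1}\|_{L_t^\infty L_x^{p+2}}^{(1-\theta_1)p}\lesssim\la^{(2-\frac pk)\frac{(1-\theta_1)p}{p+2}}\sup_sE(Iu(s))^{\frac{(1-\theta_1)p}{p+2}}$; this is the source of the $\la$-powers in $\tilde g(t)$ and $\tilde h(t)$. The outcome is $\la^{-2+\frac pk}\big[\tilde g(t)^pZ_I(t)+N^{-(s-s_c^{(1)})}Z_I(t)\tilde g(t)^{p-1}\big(\tilde g(t)+\tilde h(t)\big)\big]$.

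For the higher-order term $|u|^{2k}u$ we write $f_2(z)=|z|^{2k}z$ and let $f_2'$ denote its pair of first-order derivatives, so $|f_2'(u)|\lesssim|u|^{2k}$, and again split $\nabla I(|u|^{2k}u)=(\nabla Iu)f_2'(u)+\big[\nabla I(|u|^{2k}u)-(\nabla Iu)f_2'(u)\big]$. H\"older and Minkowski give $\|(\nabla Iu)f_2'(u)\|_{L_{t,x}^{4/3}}\lesssim\|\nabla Iu\|_{L_{t,x}^4}\|u\|_{L_{t,x}^{4k}}^{2k}\lesssim Z_I(t)\|u\|_{L_{t,x}^{4k}}^{2k}$, and the three-piece decomposition of $u$ bounds $\|u\|_{L_{t,x}^{4k}}^{2k}$ by $g_k(t)$: interpolation against $\|u\|_{L_{t,x}^5}\leq\eta$ followed by Bernstein, together with the fact that $E(Iu)$ directly controls $\|Iu\|_{2k+2}^{2k+2}$, handles the low piece; interpolation, Sobolev embedding and admissibility of $\big(4k,\tfrac{4k}{2k-1}\big)$ handle the medium piece; and Sobolev embedding with \eqref{equ2.2} for $\sigma=s_c^{(2)}$ handles the high piece, producing the term $N^{-2k(1-s_c^{(2)})}Z_I(t)^{2k}$. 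For the commutator, since $2k$ is an even integer one may expand $\nabla I(|u|^{2k}u)-(\nabla Iu)f_2'(u)$ into a finite sum of $(2k+1)$-linear expressions carrying a single derivative in which some factor is forced to frequency $\gtrsim N$ and is thereby penalized via \eqref{equ2.2}; alternatively one invokes \eqref{equ2.5}--\eqref{equ2.6} directly with a small $\nu$. Either way this commutator is dominated by $g_k(t)Z_I(t)$ once $N$ is taken large. Collecting the three pieces yields \eqref{equ5.3}.

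I expect the principal difficulty to be organizational rather than analytic: one runs two separate critical scalings at once---$s_c^{(1)}$ controls the high-frequency gains and the commutator parameter $\nu$ attached to $|u|^pu$, while $s_c^{(2)}$ controls those attached to $|u|^{2k}u$---and simultaneously must carry the weights $\la^{-2+p/k}$ (on the equation) and $\la^{2-p/k}$ (forced onto the $|u|^{p+2}$ potential by the rescaled energy). The book-keeping point to check carefully is that the hypotheses $s>\max\{\tfrac{1+s_c^{(1)}}2,s_c^{(2)}\}$ and $2k>p\geq\tfrac52$ with $1<k\in\mathbb N$ are precisely what is needed for $\theta_1,\theta_2\in(0,1)$, for $\nu=2s-s_c^{(1)}-1\in(0,s)$, and for the pairs $\big(3p,\tfrac{6p}{3p-2}\big)$ and $\big(4k,\tfrac{4k}{2k-1}\big)$ to be Schr\"odinger admissible.
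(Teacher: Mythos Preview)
Your proposal is correct and follows the paper's strategy closely. The treatment of the lower-order term $|u|^pu$ is identical to the paper's: the same splitting, the same commutator estimate via \eqref{equ2.5}, the same three-piece frequency decomposition, and the same observation that the $\la^{-2+p/k}$ weight on the potential $\|Iu\|_{p+2}^{p+2}$ in the rescaled energy is what forces the $\la$-powers into $\tilde g(t)$ and $\tilde h(t)$.

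The one place you diverge is the higher-order term $|u|^{2k}u$. The paper does \emph{not} split $\nabla I(|u|^{2k}u)$ into a main term plus a commutator; instead it invokes in one line the fact that ``$\nabla I$ acts as a derivative'' on the polynomial $|u|^{2k}u=u^{k+1}\bar u^{k}$ to obtain directly
\[
\big\|\nabla I(|u|^{2k}u)\big\|_{L_{t,x}^{4/3}}\lesssim\|u\|_{L_{t,x}^{4k}}^{2k}\|\nabla Iu\|_{L_{t,x}^4},
\]
and then bounds $\|u\|_{L_{t,x}^{4k}}^{2k}$ by $g_k(t)$ by simply rerunning the estimate \eqref{gujiagj} with $p$ replaced by $2k$ (so the relevant interpolation parameters and admissible pair are those of Proposition~\ref{zitinc} with $p\to 2k$, i.e.\ $(6k,\tfrac{6k}{3k-1})$ rather than the $(4k,\tfrac{4k}{2k-1})$ you wrote). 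Your route---main term plus commutator, with the commutator absorbed for large $N$---reaches the same bound $g_k(t)Z_I(t)$ but at the cost of an extra step; the paper's shortcut is exactly the payoff of $2k$ being an even integer, where the multilinear structure lets $\nabla I$ obey a genuine Leibniz rule without any commutator loss.
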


\begin{proof}
Using the Strichartz estimate \eqref{strest},  we get from \eqref{equ1.1.2}
\begin{align}\nonumber
Z_I(t)\lesssim&\|\nabla Iu(t_0)\|_2+\|\nabla
IF(u)\|_{L_{t,x}^\frac{4}{3}}\\
\label{equ5.4.1}\lesssim&\|\nabla
Iu(t_0)\|_2+\la^{-2+\frac{p}k}\|\nabla
I(|u|^pu)\|_{L_{t,x}^\frac{4}{3}}+\|\nabla
I(|u|^{2k}u)\|_{L_{t,x}^\frac{4}{3}},
\end{align}
where all space-time norms are computed on $[t_0,t)\times\R^2$.  Using the following estimate of low frequency part
\begin{align}\nonumber
\|u_{\leq1}\|_{L_{t,x}^{2p}}^p\lesssim&\|u_{\leq1}\|_{L_{t,x}^{5}}^{\theta_1p}\|u_{\leq1}\|_{L_{t,x}^\infty}^{(1-\theta_1)p}\lesssim
\eta^{\theta_1p}\|u_{\leq1}\|_{L_t^\infty
L_x^{p+2}}^{(1-\theta_1)p}\\\label{equ3.1d49}
\lesssim&\eta^{\theta_1p}\la^{(2-\frac{p}k)\frac{(1-\theta_1)p}{p+2}}\sup_{s\in[t_0,t]}E(Iu(s))^{\frac{(1-\theta_1)p}{p+2}},
\end{align}
we  obtain in the same way as
deriving \eqref{equ3.5} and \eqref{equ3.6.1d1}, the inequality
\begin{align}\label{equ3.31d}
\|\nabla I(|u|^pu)\|_{L_{t,x}^\frac{4}{3}} \lesssim\tilde
g(t)^pZ_I(t)+ N^{-(s-s_c)}Z_I(t)\tilde g(t)^{p-1}\big(\tilde
g(t)+\tilde h(t)\big),
\end{align}
where $\theta_1=\tfrac{5}{2p}$  with  $p\geq\tfrac{5}{2}$.

On the other hand, using the fact that $\nabla I$ acts as a
derivative, we obtain
\begin{align}\label{equ5.4.2}
\|\nabla
I(|u|^{2k}u)\|_{L_{t,x}^\frac{4}{3}}\lesssim&\|u\|_{L_{t,x}^{4k}}^{2k}\|\nabla
Iu\|_{L_{t,x}^4}, \quad \forall \; t\in[t_0,T].
\end{align}
Using estimate \eqref{gujiagj} with $p=2k$, we deduce
\begin{equation*}
\|u\|_{L_{t,x}^{4k}}^{2k}\lesssim\eta^{2k\theta_1}
\sup_{s\in[t_0,t]}E(Iu(s))^\frac{(1-\theta_1)k}{k+1}+\eta^{2k\theta_2}Z_I(t)^{2k(1-\theta_2)}
+N^{-2}Z_I(t)^{2k}.
\end{equation*}
This together with \eqref{equ5.4.1}, \eqref{equ3.31d} and
\eqref{equ5.4.2} yields \eqref{equ5.3}.

\end{proof}

\begin{proposition}[Energy increment]\label{enerin5.1}
Let $u(t,x)$ be an $H^s$ solution to problem \eqref{equ1.1.2} defined on
$[t_0,T]\times\R^2$ and satisfying
\begin{equation}\label{equ5.6}
\|u\|_{L_{t,x}^{5}([0,T]\times\R^2)}\leq\eta
\end{equation}
for some small constant $\eta.$ Assume $E(Iu(t_0))\leq1$. Then for
$$s\geq\tfrac{2k}{2k+1},~\quad ~2k>p\geq\tfrac{11}{4},~\quad\; 2\leq k\in
\mathbb{N}$$ and sufficiently large $N,$ we have
\begin{align}\label{equ5.7}
\big|\sup_{s\in[t_0,t]}E(Iu(s))-E(Iu(t_0))\big| \lesssim~
h_1(t)+h_2(t)+h_3(t)+h_4(t),
\end{align}
where the quantities $h_j(t)~(j=1,2,3, 4)$ are  defined below in \eqref{equ5.6.1}, \eqref{equ5.6.2}, \eqref{equ5.6.3},\eqref{equ5.6.4}.

\end{proposition}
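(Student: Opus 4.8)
The plan is to repeat the argument of Proposition \ref{enerin}, now keeping track of the two nonlinearities in \eqref{equ1.1.2}. Write $F(u)=\la^{-2+\frac pk}|u|^pu+|u|^{2k}u$ and let $F'$ be its derivative. Since $iIu_t+\Delta Iu=IF(u)$, the fundamental theorem of calculus together with one integration by parts gives, exactly as in the scalar case,
\begin{align*}
E(Iu(t))-E(Iu(t_0))=&-{\rm Im}\int_{t_0}^t\int_{\R^2}\overline{\nabla Iu}\cdot\nabla Iu\big[F'(Iu)-F'(u)\big]\,dxds\\
&-{\rm Im}\int_{t_0}^t\int_{\R^2}\overline{\nabla Iu}\cdot\big[(\nabla Iu)F'(u)-I(F'(u)\nabla u)\big]\,dxds\\
&-{\rm Im}\int_{t_0}^t\int_{\R^2}\overline{IF(u)}\big[F(Iu)-IF(u)\big]\,dxds.
\end{align*}
I would then split each of the three integrals into the contribution produced by $|u|^pu$ (which carries the prefactor $\la^{-2+\frac pk}$) and the contribution produced by $|u|^{2k}u$; the last integral in addition produces two genuinely mixed terms, with integrands $\overline{I(|u|^pu)}\,[\,|Iu|^{2k}Iu-I(|u|^{2k}u)\,]$ and $\overline{I(|u|^{2k}u)}\,[\,|Iu|^{p}Iu-I(|u|^{p}u)\,]$. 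These four families of terms are what become $h_1(t),h_2(t),h_3(t),h_4(t)$ in \eqref{equ5.7}.

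For the $|u|^pu$-contribution I would simply reproduce the chain of estimates \eqref{equ3.8}, \eqref{equ3.9}, \eqref{equ3.11}--\eqref{equ3.13} of Proposition \ref{enerin}, with $g,h$ replaced by the quantities $\tilde g,\tilde h$ of Proposition \ref{zitinc5.1} and the weight $\la^{-2+\frac pk}$ pulled out front. For the $|u|^{2k}u$-contribution, since $2k$ is an even integer one could expand the commutators explicitly by the Fourier method of \cite{CGT09}, but to keep the argument uniform I would re-use the rough commutator bounds of Lemma \ref{commutor} (and \eqref{equ2.6}): the ``kinetic'' terms reduce to $\|\nabla Iu\|_{L^4_{t,x}}^2\|P_{>N}u\|_{L^{4k}_{t,x}}\|u\|_{L^{4k}_{t,x}}^{2k-1}$ and $N^{-(s-s_c^{(2)})}\|\nabla Iu\|_{L^4_{t,x}}^2\|\langle\nabla\rangle^{s-s_c^{(2)}}|u|^{2k}\|_{L^2_{t,x}}$, while the ``potential'' and mixed terms reduce, after \eqref{equ2.4} and the fractional chain rule of Lemma \ref{fensc}, to products of norms of the type $\|u\|_{L^{4(2k+1)}_{t,x}}^{2k+1}$, $\|u\|_{L^{4k}_{t,x}}^{2k}$ and $\||\nabla|^{s-s_c^{(2)}}u\|_{L^{4k}_{t,x}}$. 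Each such mixed-Lebesgue norm is then decomposed into low, medium and high frequency pieces and controlled by $\sup_{s\in[t_0,t]}E(Iu(s))$, by $Z_I(t)$ and by a negative power of $N$, exactly as in \eqref{gujiagj} and \eqref{equ3.11}, using the Bernstein inequalities, \eqref{equ2.2}, the Sobolev embedding and the smallness \eqref{equ5.2} of $\|u\|_{L^5_{t,x}}$. This is where the hypotheses enter: $p\ge\tfrac{11}4$ keeps the $L^5_{t,x}$-interpolation exponents admissible (exactly as in Proposition \ref{enerin}), $2k>p$ together with $s_c^{(1)}<s_c^{(2)}$ guarantees that all the $|u|^{2k}u$-gains are no worse than $N^{-(s-s_c^{(1)})}$, and $s\ge\tfrac{2k}{2k+1}$ — the analogue of the condition $s\ge\tfrac p{p+1}$ — is precisely what lets one bound $\|u\|_{L^{4(2k+1)}_{t,x}}^{2k+1}$ by \eqref{equ2.2} with $\sigma=\tfrac{2k}{2k+1}$.

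The step I expect to be the main obstacle is the bookkeeping of the two critical indices $s_c^{(1)},s_c^{(2)}$ and of the power of $\la$: one must check that every high-frequency gain appearing — of the form $N^{-(1-s_c^{(j)})}$, $N^{-(s-s_c^{(j)})}$ or $N^{-1}$ — dominates $N^{-(s-s_c^{(1)})}$, so that the total increment is uniform and can later be absorbed in the bootstrap of Theorem \ref{thm1.2} by taking $N$ large, and that the weight $\la^{-2+\frac pk}\le1$ (valid since $p<2k$) is never needed to act in the wrong direction. The two mixed terms forming $h_3(t),h_4(t)$ also require a careful H\"older split — one factor must be placed in the small $L^5_{t,x}$-norm while the other absorbs a $\langle\nabla\rangle^{s-s_c^{(j)}}$-derivative through Lemma \ref{commutor} and Lemma \ref{fensc} — and the resulting space-time exponents must remain Schr\"odinger admissible, which is again the constraint responsible for the restriction $p\ge\tfrac{11}4$. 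Once the scalar estimates of Propositions \ref{zitinc} and \ref{enerin} are available, the remaining interpolation and Strichartz computations are routine and yield \eqref{equ5.7}.
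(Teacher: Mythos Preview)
Your decomposition and overall plan match the paper's: one splits $E(Iu(t))-E(Iu(t_0))$ into the six pieces \eqref{equ5.5.1}--\eqref{equ5.5.6}, treats the pure $|u|^pu$ contributions \eqref{equ5.5.3}$+$\eqref{equ5.5.4} by reproducing Proposition~\ref{enerin} (this gives $h_2$), and handles the mixed term \eqref{equ5.5.6} the same way (this gives $h_4$; this is indeed where $s\ge\tfrac{2k}{2k+1}$ enters, via \eqref{equ2.2} with $\sigma=\tfrac{2k}{2k+1}$).

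The genuine difference is in how you treat the algebraic nonlinearity $|u|^{2k}u$. The paper does \emph{not} use the rough commutator Lemma~\ref{commutor} on \eqref{equ5.5.1}$+$\eqref{equ5.5.2} or on the bracket in \eqref{equ5.5.5}; it invokes [\cite{CGT09}, Proposition~5.2], the multilinear Fourier expansion available precisely because $2k$ is an even integer. That sharper method is what produces the gain $N^{-1+}$ appearing in the definitions \eqref{equ5.6.1} and \eqref{equ5.6.3} of $h_1$ and $h_3$. Your rough approach would instead yield at best $N^{-(1-s_c^{(2)})}=N^{-1/k}$ for the analogue of $II_1$ and $N^{-(s-s_c^{(2)})}$ for the analogue of $II_2$, both strictly weaker than $N^{-1+}$ when $k\ge2$ and $s<1$. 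In particular your claim that the $|u|^{2k}u$-gains are ``no worse than $N^{-(s-s_c^{(1)})}$'' is backwards: since $s_c^{(2)}>s_c^{(1)}$ one has $s-s_c^{(2)}<s-s_c^{(1)}$, so $N^{-(s-s_c^{(2)})}$ is the \emph{larger} error and the $2k$-terms would become the bottleneck rather than being harmless. This matters downstream: the $N^{-1+}$ in $h_1,h_3$ is exactly one of the two rates in the almost conservation law \eqref{equ5.9}, and replacing it by $N^{-(s-s_c^{(2)})}$ would worsen the threshold $\tilde s_3$ in Theorem~\ref{thm1.2}. So to obtain the proposition with the $h_j$ as actually defined in \eqref{equ5.6.1}--\eqref{equ5.6.4}, you must use the multilinear estimate of \cite{CGT09} for the $|u|^{2k}u$-pieces, not Lemma~\ref{commutor}.
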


\begin{proof}
Since
 $$iIu_t+\Delta Iu=If(u)=\la^{-2+\frac{p}k}I\big(|u|^pu\big)+I\big(|u|^{2k}u\big),$$
by a simple computation, we obtain
\begin{align}\nonumber
E(Iu(t))-E(Iu(t_0)) =&\int_{t_0}^t\frac{\pa}{\pa
s}E(Iu(s))ds\\\label{equ5.5.1} =&{\rm
Im}\int_{t_0}^t\int_{\R^d}\overline{\Delta
Iu}\Big[|Iu|^{2k}Iu-I(|u|^{2k}u)\Big]dxds\\\label{equ5.5.2} &-{\rm
Im}\int_{t_0}^t\int_{\R^d}\overline{I(|u|^{2k}u)}\Big[|Iu|^{2k}Iu-I(|u|^{2k}u)\Big]dxds\\\label{equ5.5.3}
&-\la^{-2+\frac{p}k}{\rm Im}\int_{t_0}^t\int_{\R^d}\overline{\nabla
Iu}\cdot\nabla \Big[|Iu|^pIu-I(|u|^pu)\Big]dxds\\\label{equ5.5.4}
&-\la^{-4+\frac{2p}k}{\rm
Im}\int_{t_0}^t\int_{\R^d}\overline{I(|u|^pu)}\Big[|Iu|^pIu-I(|u|^pu)\Big]dxds\\\label{equ5.5.5}
&-\la^{-2+\frac{p}k}{\rm
Im}\int_{t_0}^t\int_{\R^d}\overline{I(|u|^pu)}\Big[|Iu|^{2k}Iu-I(|u|^{2k}u)\Big]dxds\\\label{equ5.5.6}
&-\la^{-2+\frac{p}k}{\rm
Im}\int_{t_0}^t\int_{\R^d}\overline{I(|u|^{2k}u)}\Big[|Iu|^pIu-I(|u|^pu)\Big]dxds.
\end{align}
Recalling the result in [\cite{CGT09}, Proposition 5.2], we have for
$s>\tfrac{2(k-1)}{2k-1}$
\begin{align}\nonumber
&\big|\eqref{equ5.5.1}+\eqref{equ5.5.2}\big|\\\nonumber
\lesssim&~N^{-1+}\Big(Z_I(t)^{2k+2}+\eta^{2k\theta_5}Z_I(t)^2\sup_{s\in[t_0,t]}E(Iu(s))^\frac{(1-\theta_5)k}{k+1}\\\nonumber
&\qquad\quad\quad+\sum_{J=3}^{2k+2}
\eta^{(2k+2-J)\theta_6}Z_I(t)^J\sup_{s\in[t_0,
t]}E(Iu(s))^{\frac{(1-\theta_6)(2k+2-J)}{2(k+1)}}\Big)\\\nonumber
&+N^{-1+}\Big(Z_I(t)^{2k+1}+\eta^{2k\theta_5}Z_I(t)\sup_{s\in[t_0,t]}E(Iu(s))^\frac{(1-\theta_5)k}{k+1}\Big)\\\nonumber
&\qquad\times\Big(Z_I(t)^{2k+1}+\eta^{\theta_7(2k+1)}
\sup_{s\in[t_0,t]}
E(Iu(s))^\frac{(1-\theta_7)(2k+1)}{2(k+1)}\Big)\\\nonumber
&+N^{-1+}\sum_{J=3}^{2k+2}\eta^{(2k+2-J)\theta_6}Z_I(t)^{J-1}\sup_{s\in[t_0,
t]}E(Iu(s))^{\frac{(1-\theta_6)(2k+2-J)}{2(k+1)}}\\\nonumber
&\qquad\times\Big(Z_I(t)^{2k+1}+\eta^{\theta_7(2k+1)}
\sup_{s\in[t_0,t]}
E(Iu(s))^\frac{(1-\theta_7)(2k+1)}{2(k+1)}\Big)\\\label{equ5.6.1}
=:&h_1(t),
\end{align}
where $\theta_5,\theta_6,\theta_7\in(0,1)$ are defined by
$$\theta_5=\tfrac{5}{4k},~\theta_6=\tfrac{5}{4(2k-1)},~\theta_7=\tfrac{5}{3(2k+1)}.$$
Here, we adopt the interaction Morawetz norm $L_{t,x}^5$ instead of
$L_t^4L_x^8$-norm as used in \cite{CGT09}. There is only one
difference appearing in the power of $\eta$ and $E(Iu)$.

While, by Proposition \ref{enerin}, we have for
$s\geq\tfrac{p}{p+1}$ and $p\geq\tfrac{11}{4}$
\begin{align}
&\big|\eqref{equ5.5.3}+\eqref{equ5.5.4}\big|\nonumber\\
\lesssim&\la^{-2+\frac{p}k}\Big(N^{-(1-s_c^{(1)})}Z_I(t)^3\tilde
g(t)^{p-1}+N^{-(s-s_c^{(1)})}Z_I(t)^2\tilde g(t)^{p-1}\big[\tilde
g(t)+\tilde h(t)\big]\Big)\nonumber \\
&+\la^{-4+\frac{2p}k}\tilde
k(t)\Big(N^{-1}\tilde g(t)^pZ_I(t)+\eta
N^{-(s-s_c^{(1)}+\frac1q)}\tilde m(t)\Big)=: h_2(t), \label{equ5.6.2}
\end{align}
where $q=\frac{5p}{11}$, and  $\tilde k(t),~\tilde m(t)$ are defined
by
\begin{equation*}
\begin{cases}
\tilde
k(t)=\eta^{\theta_3(p+1)}\la^{(2-\frac{p}k)\frac{(1-\theta_3)(p+1)}{p+2}}\sup\limits_{s\in[t_0,t]}E(Iu(s))^{\frac{(1-\theta_3)(p+1)}{p+2}}+
Z_I(t)^{p+1},~\theta_3=\tfrac{5}{3(p+1)}\\
\tilde m(t)=\eta^{1+\theta_4}\tilde
g(t)^{(1-\theta_4)(p-1)}\Big(\eta^{\theta_4} \tilde
g(t)^{1-\theta_4}+Z_I(t)\Big),~\theta_4=\tfrac{1}{4p-10}.
\end{cases}
\end{equation*}

{\bf $\bullet$ The estimate of \eqref{equ5.5.5}:} By the same
argument as leading to \eqref{equ5.5.1}, we have for
\begin{align}\nonumber
|\eqref{equ5.5.5}|\lesssim&\la^{-2+\frac{p}k}N^{-1+}\sup_N\|P_NI(|u|^pu)\|_{L_{t,x}^4}
\Big\{Z_I(t)^{2k+2}+\eta^{2k\theta_5}Z_I(t)^2\sup_{s\in[t_0,t]}E(Iu(s))^\frac{(1-\theta_5)k}{k+1}\\\label{eque5.5}
&+\sum_{J=3}^{2k+2}
\eta^{(2k+2-J)\theta_6}Z_I(t)^J\sup_{s\in[t_0,
t]}E(Iu(s))^{\frac{(1-\theta_6)(2k+2-J)}{2(k+1)}}\Big\},\;\;  s>\frac{2(k-1)}{2k-1}.
\end{align}
To estimate $\|P_NI(|u|^pu)\|_{L_{t,x}^4}$,   we
 obtain by \eqref{equ3.11}
\begin{align*}
\|P_NI(|u|^pu)\|_{L_{t,x}^4}\lesssim&\|u\|_{L_{t,x}^{4(p+1)}}^{p+1}\\
\lesssim&\eta^{\theta_3(p+1)}\la^{(2-\frac{p}k)\frac{(1-\theta_3)(p+1)}{p+2}}\sup_{s\in[t_0,t]}E(Iu(s))^{\frac{(1-\theta_3)(p+1)}{p+2}}+
Z_I(t)^{p+1},
\end{align*}
where $\theta_3=\tfrac{5}{3(p+1)}$ and we need the restriction
$s\geq\frac{p}{p+1}$ in the estimate of the high frequency part by
means of \eqref{equ2.2} with $\sigma=\frac{p}{p+1}$. Plugging this
into \eqref{eque5.5} gives
\begin{align}\nonumber
&|\eqref{equ5.5.5}|\\\nonumber
\lesssim&\la^{-2+\frac{p}k}N^{-1+}
\Big(\eta^{\theta_3(p+1)}\la^{(2-\frac{p}k)\frac{(1-\theta_3)(p+1)}{p+2}}\sup_{s\in[t_0,t]}E(Iu(s))^{\frac{(1-\theta_3)(p+1)}{p+2}}+
Z_I(t)^{p+1}\Big)\\\nonumber
&\times\Big\{Z_I(t)^{2k+2}+\eta^{2k\theta_5}Z_I(t)^2\sup_{s\in[t_0,t]}E(Iu(s))^\frac{(1-\theta_5)k}{k+1}\\\nonumber
&\qquad\quad+\sum_{J=3}^{2k+2}
\eta^{(2k+2-J)\theta_6}Z_I(t)^J\sup_{s\in[0,
t]}E(Iu(s))^{(1-\theta_6)(2k+2-J)}\Big\}\\\label{equ5.6.3} =:&h_3(t).
\end{align}

{\bf $\bullet$ The estimate of \eqref{equ5.5.6}:} By the same
argument as deducing \eqref{equ3.10}, we get
\begin{align}\nonumber
&|\eqref{equ5.5.6}|\\\nonumber
\lesssim&\la^{-2+\frac{p}k}\|u\|_{L_{t,x}^{4(2k+1)}}^{2k+1}
\Big[\big\|Iu(|Iu|^p-|u|^p)\big\|_{L_{t,x}^\frac{4}{3}}+\big\|(Iu)|u|^p-I(|u|^pu)\big\|_{L_{t,x}^\frac{4}{3}}\Big]\\\nonumber
\lesssim&\la^{-2+\frac{p}k}\bigg\{\eta^{\theta_3(2k+1)}\sup_{s\in[t_0,t]}E(Iu(s))^{\frac{(1-\theta_3)(2k+1)}{2(k+1)}}+
Z_I(t)^{2k+1}\bigg\}\\\nonumber &\times \Big\{N^{-1}\tilde
g(t)^pZ_I(t)+ N^{-(s-\frac12+\frac1q)}\tilde m(t)\Big\}\\\label{equ5.6.4}
 \simeq&h_4(t),
\end{align}
where  $q=\tfrac{5p}{11}$, $\tilde g(t)$ is defined as in
Proposition \ref{zitinc5.1}. We need the restriction $s\geq\tfrac{2k}{2k+1}$ in
the estimate of the high frequency part by means of \eqref{equ2.2}
with $\sigma=\frac{2k}{2k+1}$.

Note that
$\tfrac{dk}{2k+1}>\max\{\tfrac{2(k-1)}{2k-1},\tfrac{p}{p+1}\}$ and
collecting \eqref{equ5.6.1}-\eqref{equ5.6.4}, we obtain
\eqref{equ5.7}. Therefore, we conclude the proof of this
proposition.
\end{proof}

\vskip0.12cm

Combining the above two propositions, a standard bootstrap argument
and arguing as in the proof of Proposition \ref{almost}
 we can
easily show that the quantity $E(Iu)(t)$ is ``almost conserved" by using the condition
$$s_c^{(2)}<\max\big\{\tfrac{1+s_c^{(1)}}2,~\tfrac{dk}{2k+1}\big\}.$$

\begin{proposition}[Almost conservation law]\label{almost5.1}
Let $u(t,x)$ be an $H^s$ solution to problem \eqref{equ1.1.2} with  defined
on $[0,T]\times\R^2$ and satisfying
\begin{equation}\label{equ5.8}
\|u\|_{L_{t,x}^{5}([0,T]\times\R^2)}\leq\eta
\end{equation}
for some small constant $\eta.$ Assume $E(Iu_0)\leq1$. Then for
$$s\geq\max\big\{\tfrac{1+s_c^{(1)}}2,\tfrac{2k}{2k+1}\big\},
~~2k>p\geq\tfrac{11}{4},~2\leq k\in\mathbb{N}$$
 and sufficiently large $N,$ we
have
\begin{align}\label{equ5.9}
E(Iu)(t)=E(Iu_0)+O\big(\max\big\{N^{-1+},\la^{-(2-\frac{p}k)(1-\frac{(1-\theta_1)p}{p+2})}N^{-(s-s_c^{(1)})}\big\}\big),
\end{align}
where $\theta_1=\tfrac{5}{2p}$.
\end{proposition}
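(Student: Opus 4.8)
The plan is to run the bootstrap of Proposition \ref{almost} verbatim, now feeding in Propositions \ref{zitinc5.1} and \ref{enerin5.1} in place of Propositions \ref{zitinc} and \ref{enerin}. The first observation is that \eqref{equ5.9} reduces to the two a priori bounds
\[
Z_I(t)\lesssim\|\nabla Iu_0\|_2,\qquad \sup_{s\in[0,t]}E(Iu(s))\lesssim E(Iu_0)\le1,\qquad \forall\,t\in[0,T].
\]
Indeed, once these are known, inserting them into the energy increment \eqref{equ5.7} makes each of $h_1(t),h_3(t)$ a constant multiple of $N^{-1+}$ (they carry only the $N^{-1+}$ commutator gain of [\cite{CGT09}, Proposition 5.2], any positive power of $\lambda$ appearing there being beaten by the prefactor $\lambda^{-2+p/k}$), and makes each of $h_2(t),h_4(t)$ a constant multiple of $\lambda^{-(2-p/k)(1-\frac{(1-\theta_1)p}{p+2})}N^{-(s-s_c^{(1)})}$, where the several commutator gains $N^{-(1-s_c^{(1)})},N^{-(s-s_c^{(1)})},N^{-(s-1/2+1/q)}$ are collapsed into the single exponent $N^{-(s-s_c^{(1)})}$ by the inequality $s-s_c^{(1)}\le\min\{1-s_c^{(1)},\,s-\tfrac12+\tfrac1q\}$ with $q=\tfrac{5p}{11}$, exactly as before Proposition \ref{almost}. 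Taking the larger of the two bounds gives \eqref{equ5.9}.

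To establish the a priori bounds I would argue by continuity on $[0,T]$. With $C_1,C_2$ large constants depending only on the Strichartz constant, set
\[
\Omega_1:=\big\{t\in[0,T]:Z_I(t)\le C_1\|\nabla Iu_0\|_2,\ \sup_{s\in[0,t]}E(Iu(s))\le C_2E(Iu_0)\big\},
\]
\[
\Omega_2:=\big\{t\in[0,T]:Z_I(t)\le 2C_1\|\nabla Iu_0\|_2,\ \sup_{s\in[0,t]}E(Iu(s))\le 2C_2E(Iu_0)\big\}.
\]
As in Proposition \ref{almost}, $\Omega_1$ is nonempty (it contains $0$) and closed (Fatou's lemma); once $\Omega_2\subset\Omega_1$ is established, $\Omega_1$ is also open to the right by the local well-posedness theory, so that, being nonempty, closed and open to the right in the connected set $[0,T]$, it equals $[0,T]$. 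The heart of the argument is therefore $\Omega_2\subset\Omega_1$: on $\Omega_2$ the hypotheses $E(Iu_0)\le1$ and $Z_I(t)\lesssim1$ keep $\tilde g(t),\tilde h(t),g_k(t),\tilde k(t),\tilde m(t)$ under control, so that feeding these into \eqref{equ5.3} and \eqref{equ5.7} and then choosing $N$ large and $\eta$ small (depending on $C_1,C_2$ and $E(Iu_0)$) turns every term on the right other than $\|\nabla Iu_0\|_2$, respectively $E(Iu_0)$, into a genuine perturbation, which improves $2C_1,2C_2$ back to $C_1,C_2$ and closes the bootstrap.

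The step I expect to be the main obstacle is the bookkeeping of the powers of $\lambda$. A positive power $\lambda^{2-p/k}$ is unavoidably injected into the low-frequency pieces of $\tilde g,\tilde h,\tilde k$, because the modified energy weights the subcritical term so that $\int|Iu|^{p+2}\lesssim\lambda^{2-p/k}E(Iu)$ (see \eqref{equ3.1d49}); one must check that in every term of \eqref{equ5.3} and \eqref{equ5.7} this is overcompensated by the prefactors $\lambda^{-2+p/k}$ or $\lambda^{-4+2p/k}$. This works precisely because $2k>p$ forces $2-p/k>0$: then every net exponent of $\lambda$ is nonpositive, and since $\lambda\ge1$ the resulting factor is $\le1$, so the bootstrap closes uniformly in $\lambda\ge1$; the largest surviving $\lambda$-factor in the energy increment, $\lambda^{-(2-p/k)(1-\frac{(1-\theta_1)p}{p+2})}$ coming from $h_2(t),h_4(t)$, is the one recorded in \eqref{equ5.9}. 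The remaining verifications — that $s\ge\max\{\tfrac{1+s_c^{(1)}}2,\tfrac{2k}{2k+1}\}$ forces $s>s_c^{(2)}$, $s>\tfrac{2(k-1)}{2k-1}$ and $s\ge\tfrac{p}{p+1}$ so that Propositions \ref{zitinc5.1} and \ref{enerin5.1} are applicable, and that $g_k(t)\ll1$ on $\Omega_2$ (which uses $1-s_c^{(2)}=\tfrac1k>0$) — are routine and parallel the computations in Sections 3 and 4.
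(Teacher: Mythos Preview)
Your proposal is correct and follows essentially the same route as the paper: run the bootstrap of Proposition \ref{almost} with Propositions \ref{zitinc5.1} and \ref{enerin5.1} in place of Propositions \ref{zitinc} and \ref{enerin}, then read off the dominant $N$- and $\lambda$-powers from each $h_j$. The paper's proof is in fact terser than yours---it simply lists the four contributions $N^{-1+}$, $\lambda^{-(2-p/k)(1-\frac{(1-\theta_1)p}{p+2})}N^{-(s-s_c^{(1)})}$, $\lambda^{-(2-p/k)}N^{-1+}$, and $\lambda^{-(2-p/k)}\lambda^{(2-p/k)(1-\theta_1)(1-\theta_4)\frac{p}{p+2}}N^{-(s-\frac12+\frac1q)}$ from $h_1,\dots,h_4$ respectively and observes that the first two dominate---whereas you group $h_1,h_3$ and $h_2,h_4$ and explain the $\lambda$-bookkeeping that makes this grouping legitimate; both arrive at \eqref{equ5.9}.
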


\begin{proof}
By the same way as deducing Proposition \ref{almost}, we derive that
the contributions of $h_j(t)(j=1,2,3,4)$ to the difference
$E(Iu(t))-E(Iu(t_0))$ are
\begin{align*}
&N^{-1},~\la^{-(2-\frac{p}k)(1-\frac{(1-\theta_1)p}{p+2})}N^{-(s-s_c^{(1)})},~\la^{-(2-\frac{p}k)}N^{-1+},\\
&\la^{-(2-\frac{p}k)}
N^{-(s-\frac12+\frac1q)}\la^{(2-\frac{p}k)(1-\theta_1)(1-\theta_4)\frac{p}{p+2}}
\end{align*}
respectively. This fact gives the formula \eqref{equ5.9}.
\end{proof}

\subsection{Global well-posedness and scattering}  By an
argument as similar to that in Section 3, we can reduce the proof of Theorem
\ref{thm1.2} to the following proposition.

\begin{proposition}\label{promain5.1}
Suppose $u(t,x)$ is a global solution to problem \eqref{equ1.1} with
$f(u)=|u|^pu+|u|^{2k}u$ and $u_0\in C_0^\infty(\R^2)$. Then
for
$$2k>p\geq\tfrac{11}{4},~1<
k\in\mathbb{N},$$ and
$$s\in(\tilde s_3,1),~\tilde s_3:=\max\big\{\tfrac{1+s_c^{(1)}}2,\tfrac{2k}{2k+1},\tfrac{5s_c^{(2)}}{4s_c^{(2)}+1},~s_3\big\}$$
with $s_3$ being the positive root of the quadratic equation
$$s^2-(s_c^{(1)}+s_c^{(2)}-\al)s-\al=0,~\al=4s_c^{(2)}-\tfrac{9(2-\frac{p}k)}{2(p+2)},$$ we have
\begin{align}\label{equ5.10}
\|u\|_{L_{t,x}^5(\R\times\R^2)}\leq&
C\big(\|u_0\|_{H^s(\R^2)}\big),\\\label{equ5.11}
\sup_{t\in\R}\|u(t)\|_{H^s(\R^2)}\leq&C\big(\|u_0\|_{H^s(\R^2)}\big).
\end{align}
\end{proposition}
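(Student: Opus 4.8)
The plan is to transpose the proof of Proposition \ref{promain} to the two-power setting, keeping careful track of the extra powers of $\la$ produced by the scaled equation \eqref{equ1.1.2}. First I would pass to the rescaled solution $u^\la(t,x)=\la^{-1/k}u\big(t/\la^2,x/\la\big)$, which solves \eqref{equ1.1.2} and, by mass conservation, satisfies $\|u_0^\la\|_2=\la^{s_c^{(2)}}\|u_0\|_2$; I fix $\la=\la(\|u_0\|_{H^s},N)$ large enough that \eqref{equ5.3.1.1} holds, so that $E(Iu_0^\la)\le1$ as in \eqref{equ5.41}. Since $s>s_c^{(2)}$, the first constraint in \eqref{equ5.3.1.1} forces $\la\gtrsim N^{(1-s)/(s-s_c^{(2)})}$, and I take $\la$ comparable to this minimal value, so that in the end everything is governed by the single large parameter $N=N(\|u_0\|_{H^s})$. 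Since the scattering assertion of Theorem \ref{thm1.2} reduces, by the argument of Section 3, to the two bounds \eqref{equ5.10}--\eqref{equ5.11}, it suffices to prove these.

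The core is the bootstrap claim $\|u^\la\|_{L^5_{t,x}([0,\infty)\times\R^2)}\le C\la^{\frac45 s_c^{(2)}}$, which I would establish by the continuity argument used for Proposition \ref{promain}: define $\Omega_1=\{t\ge0:\|u^\la\|_{L^5_{t,x}([0,t]\times\R^2)}\le C\la^{\frac45 s_c^{(2)}}\}$ and $\Omega_2$ with $2C$ in place of $C$, note that $\Omega_1$ is nonempty and closed and that property (3) follows from (2) and the local theory, and reduce to $\Omega_2\subset\Omega_1$. For $T\in\Omega_2$, the interaction Morawetz estimate \eqref{equ2.13} and mass conservation give
\begin{equation*}
\|u^\la\|_{L^5_{t,x}([0,T]\times\R^2)}\lesssim\la^{\frac35 s_c^{(2)}}\|u_0\|_2^{3/5}\|u^\la\|_{L^\infty_t\dot H^{1/2}}^{2/5},
\end{equation*}
and splitting $u^\la=P_{\le N}u^\la+P_{>N}u^\la$ and interpolating exactly as in \eqref{equ4.7}--\eqref{equ4.8} (using that $I$ is the identity below frequency $N$, \eqref{equ2.2} above it, and $\la\gg1$) bounds $\|u^\la\|_{L^\infty_t\dot H^{1/2}}$ by $C(\|u_0\|_2)\la^{\frac{s_c^{(2)}}2}\big(\|Iu^\la\|_{\dot H^1}^{1/2}+\|Iu^\la\|_{\dot H^1}^{1/(2s)}\big)$; since the net power of $\la$ is $\tfrac35 s_c^{(2)}+\tfrac25\cdot\tfrac{s_c^{(2)}}2=\tfrac45 s_c^{(2)}$, the claim is reduced to the a priori bound $\sup_{[0,T]}\|Iu^\la\|_{\dot H^1}\le1$ for $T\in\Omega_2$.

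To obtain this last bound I would split $[0,T]$ into $L\sim(\la^{\frac45 s_c^{(2)}}/\eta)^5\sim\la^{4s_c^{(2)}}$ subintervals $I_j$ with $\|u^\la\|_{L^5_{t,x}(I_j\times\R^2)}\le\eta$, $\eta$ as in Proposition \ref{almost5.1}, and sum the almost conservation law \eqref{equ5.9} over them, giving
\begin{equation*}
\sup_{[0,T]}E(Iu^\la)(t)\le E(Iu_0^\la)+L\cdot O\big(\max\big\{N^{-1+},\ \la^{-\frac{9(2-p/k)}{2(p+2)}}N^{-(s-s_c^{(1)})}\big\}\big),
\end{equation*}
where the term $N^{-1+}$ records the multilinear treatment of the scale-invariant power $|u|^{2k}u$ and the gain in $\la$ in the second term reflects the small coefficient $\la^{-2+p/k}$ carried by the lower power $|u|^pu$. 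The main obstacle is to force the right-hand side below $1$: after substituting $L\sim\la^{4s_c^{(2)}}$ and $\la\sim N^{(1-s)/(s-s_c^{(2)})}$ this requires $N^{4s_c^{(2)}(1-s)/(s-s_c^{(2)})-1}\ll1$ and $N^{\al(1-s)/(s-s_c^{(2)})-(s-s_c^{(1)})}\ll1$ with $\al=4s_c^{(2)}-\tfrac{9(2-p/k)}{2(p+2)}$; the first holds exactly when $s>\tfrac{5s_c^{(2)}}{4s_c^{(2)}+1}$ and the second when $\al(1-s)<(s-s_c^{(1)})(s-s_c^{(2)})$, which, on discarding the harmless positive term $s_c^{(1)}s_c^{(2)}$, is $s>s_3$. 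Combined with $s>\tfrac{1+s_c^{(1)}}2$ and $s\ge\tfrac{2k}{2k+1}$, the thresholds demanded by Propositions \ref{zitinc5.1} and \ref{enerin5.1}, this is precisely $s\in(\tilde s_3,1)$, so $N$ may be chosen large depending only on $\|u_0\|_{H^s}$ and the bootstrap closes; this proves the claim and, after undoing the scaling, \eqref{equ5.10}. Finally \eqref{equ5.11} follows from the claim, mass conservation and \eqref{equ2.3}:
\begin{equation*}
\|u(t)\|_{H^s}\lesssim\|u_0\|_2+\la^{s-s_c^{(2)}}\|Iu^\la(\la^2 t)\|_{H^1}\lesssim\|u_0\|_2+\la^{s-s_c^{(2)}}\big(\la^{s_c^{(2)}}\|u_0\|_2+1\big)\lesssim\la^s\le C(\|u_0\|_{H^s}).
\end{equation*}
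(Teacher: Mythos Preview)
Your proposal is correct and follows essentially the same route as the paper's proof: the same rescaling $u^\la=\la^{-1/k}u(t/\la^2,x/\la)$, the same bootstrap on $\|u^\la\|_{L^5_{t,x}}\le C\la^{\frac45 s_c^{(2)}}$ via the interaction Morawetz estimate and the high/low splitting of $\|u^\la\|_{\dot H^{1/2}}$, the same subdivision into $L\sim\la^{4s_c^{(2)}}$ intervals, and the same invocation of the almost conservation law \eqref{equ5.9} to close. Your simplification of the $\la$-exponent in \eqref{equ5.9} to $\tfrac{9(2-p/k)}{2(p+2)}$ is exactly the computation $(2-\tfrac pk)\big(1-\tfrac{(1-\theta_1)p}{p+2}\big)=\tfrac{9(2-p/k)}{2(p+2)}$ with $\theta_1=\tfrac5{2p}$, and your observation that the paper's quadratic defining $s_3$ drops the positive constant $s_c^{(1)}s_c^{(2)}$ (giving a slightly larger but sufficient threshold) is a helpful clarification that the paper leaves implicit.
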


\begin{proof}
By the same argument as in the proof of Proposition \ref{promain} and the  scaling transform,
we claim that
\begin{equation}\label{equ5.2.1}
\|u^\la\|_{L_{t,x}^5(\R\times\R^2)}\leq C\la^{\frac{4}{5}s_c^{(2)}}.
\end{equation}
Indeed, we define
$$\Omega_1:=\big\{t\in[0,\infty):~\|u^\la\|_{L_{t,x}^5([0,t]\times\R^2)}\leq C\la^{\frac{4}{5}s_c^{(2)}}\big\}.$$
We want to show $\Omega_1=[0,\infty)$. Let
$$\Omega_2:=\big\{t\in[0,\infty):~\|u^\la\|_{L_{t,x}^5([0,t]\times\R^2)}\leq 2C\la^{\frac{4}{5}s_c^{(2)}}\big\}.$$
Then, it suffices to show $\Omega_2\subset\Omega_1$ by the standard
bootstrap argument. In the same way as
deriving \eqref{equ4.5}, it is sufficient to prove
\begin{equation}\label{equ5.4.10}
\sup_{s\in[0,T]}\|Iu^\la\|_{\dot H^1}\leq1, \quad  T\in\Omega_2.
\end{equation}
 In fact, let $\eta>0$ be a
sufficiently small constant as in Proposition \ref{almost}, and we
divide $[0,T]$ into
\begin{equation}\label{equ5.4.11}
L\sim\Big(\frac{\la^{\frac{4}{5}s_c^{(2)}}}\eta\Big)^5\sim\la^{4s_c^{(2)}},
\end{equation}
subintervals $I_j=[t_j,t_{j+1}]$ such that
\begin{equation}\label{equ5.4.12}
\|u^\la\|_{L_{t,x}^5(I_j\times\R^2)}\leq\eta.
\end{equation}
Using Proposition \ref{almost5.1} on each interval $I_j,$ we
obtain
\begin{equation}\label{equ5.4.13}
\sup_{t\in[0,T]}E(Iu^\la)(t)\leq
E(Iu_0^\la)+L{\mathcal O}\big(\max\big\{N^{-1+},~\la^{-(2-\frac{p}k)(1-\frac{(1-\theta_1)p}{p+2})}N^{-(s-s_c^{(1)})}\big\}\big).
\end{equation}
To maintain a small energy during the iteration, we need the estimate
\begin{align*}
&L{\mathcal O}\big(\max\big\{N^{-1+},~\la^{-(2-\frac{p}k)(1-\frac{(1-\theta_1)p}{p+2})}N^{-(s-s_c^{(1)})}\big\}\big)\\
\simeq&\la^{4s_c^{(2)}}
{\mathcal O}\big(\max\big\{N^{-1+},~\la^{-(2-\frac{p}k)(1-\frac{(1-\theta_1)p}{p+2})}N^{-(s-s_c^{(1)})}\big\}\big)\ll1,
\end{align*}
which together with \eqref{equ5.3.1.1} leads to
$$\la^{4s_c^{(2)}}N^{-1+}\simeq
N^{4s_c^{(2)}\frac{1-s}{s-s_c^{(2)}}}N^{-1+}\simeq
N^{\frac{(3p-4-\frac4p)-s(3p-4)}{p(s+\frac2p-1)-}}\ll1$$ and
$$\la^{4s_c^{(2)}}\la^{-(2-\frac{p}k)(1-\frac{(1-\theta_1)p}{p+2})}N^{-(s-s_c^{(1)})}\simeq N^{\frac{(1-s)\al}{s-s_c^{(2)}}-(s-s_c^{(1)})}\ll1$$
with
$$\al=4s_c^{(2)}-(2-\tfrac{p}k)(1-\tfrac{(1-\theta_1)p}{p+2})=4s_c^{(2)}-\tfrac{9(2-\frac{p}k)}{2(p+2)}.$$
 They may be
ensured by choosing $N=N(\|u_0\|_{H^s})$ large enough provided
$$s>\max\{\tfrac{5s_c^{(2)}}{4s_c^{(2)}+1},s_3\},$$
where $s_3$ is the positive root of the quadratic equation
$$s^2-(s_c^{(1)}+s_c^{(2)}-\al)s-\al=0.$$ This completes the bootstrap argument,  and hence
 Proposition \ref{promain5.1} follows. Therefore, we
conclude Theorem \ref{thm1.2}.
\end{proof}

\appendix

\section*{Appendix}
\setcounter{equation}{0}
\renewcommand{\theequation}{A.\arabic{equation}}
\renewcommand{\thetheorem}{A.\arabic{theorem}}
 In this Appendix, we state the result in the one
dimension. In fact, the proof is the same as in the case dimension two. We
utilize the following classical interaction Morawetz estimates in
\cite{CHVZ,PV}
\begin{equation}
\|u\|_{L_{t,x}^8(I\times\R)}\lesssim\|u\|_{L_t^\infty(I;\dot
H^\frac12(\R))}^\frac{1}{4}\|u_0\|_2^\frac3{4},
\end{equation}
and the improved interaction Morawetz estimates in \cite{CGT07}
\begin{equation}
\int_0^T\int_{\R}|u(t,x)|^6dxdt\lesssim
T^\frac13\|u_0\|_{L_x^2}^4\|u\|_{L_t^\infty([0,T],\dot
H^{1/2}_x)}^2+T^\frac13\|u_0\|_{L_x^2}^6,
\end{equation}
instead of \eqref{equ2.14rest} and \eqref{improved}.

Define
\begin{align}
s_0:=&\max\big\{\tfrac{1+s_c}2,~\tfrac{p}{2(p+1)},~ s_1\big\},
s_c=\tfrac{1}2-\tfrac2p,\\
\tilde s_0:=&\max\big\{\tfrac{1+s_c}2,~\tfrac{dp}{2(p+1)},~ \tilde
s_1\big\},
\end{align}
where $s_1$ is the positive root of the quadratic equation
$$s^2+5s_cs+s_c^2-7s_c=0,$$
and $\tilde s_1$ is the positive root of the quadratic equation
$$3(s-s_c)^2-2(1+9s_c)(1-s)=0.$$

\begin{theorem}\label{theorem1d}
{\rm (i)}\;  Assume that $u_0\in H^s(\R)$ with $s\in(s_0,1)$ and
$p\geq\tfrac{17}{3}$. Then the solution $u$ to $iu_t+\Delta
u=|u|^pu$ is global and scatters.

{\rm (ii)}\;Assume that $u_0\in H^s(\R)$ with $s\in(\tilde
s_0,1)$ and $p\geq4$. Then the solution $u$ to  $iu_t+\Delta
u=|u|^pu$ is global. Furthermore,  for any $T>0$,
\begin{equation}
\sup_{t\in[0,T]}\|u(t)\|_{H^s(\R)}\leq
C\big(\|u_0\|_{H^s(\R)}\big)(1+T)^{\frac{1-s}{3(s-s_c)^2-2(1+9s_c)(1-s)}+}.
\end{equation}

{\rm (iii)}\; Assume that $u_0\in H^s(\R)$ with
\begin{equation*}
s\in\Big(\max\big\{\tfrac{1+s_c^{(2)}}2,~\tfrac{p_2}{2(p_2+1)},~
s_2\big\},~1\Big),\;
s_c^{(j)}=\tfrac{1}2-\tfrac2{p_j},~j=1,2,~\tfrac{17}{3}\leq p_1<p_2
\end{equation*}
and $s_2$ is the positive root of the quadratic equation
$$s^2+5s_c^{(2)}s+(s_c^{(2)})^2-7s_c^{(2)}=0.$$
 Then the solution $u$ to
$iu_t+\Delta u=|u|^{p_1}u+|u|^{p_2}u$ is global and scatters.
\end{theorem}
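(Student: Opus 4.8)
The plan is to run, step by step, the I-method argument of Sections 3--5, with the two-dimensional dispersive decay $|t|^{-1}$ replaced by the one-dimensional decay $|t|^{-1/2}$ (so that the admissible pairs satisfy $\tfrac2q+\tfrac1r=\tfrac12$, $s_c=\tfrac12-\tfrac2p$, and the Sobolev/Bernstein exponents are the one-dimensional ones), and with the interaction Morawetz inputs $\|u\|_{L^5_{t,x}}$, resp.\ the improved $L^4_{t,x}$-bound, replaced by the one-dimensional estimates recalled at the top of this appendix, namely $\|u\|_{L^8_{t,x}(I\times\R)}\lesssim\|u\|_{L^\infty_t\dot H^\frac12}^{1/4}\|u_0\|_2^{3/4}$, resp.\ $\int_0^T\!\!\int_\R|u|^6\,dxdt\lesssim T^\frac13\|u_0\|_2^4\|u\|_{L^\infty_t\dot H^\frac12}^2+T^\frac13\|u_0\|_2^6$. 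The operator $I=I_N$ of \eqref{mxidy}, the commutator bounds of Lemma \ref{commutor}, and the fractional calculus of Lemma \ref{fensc} are dimension-free, so the only changes throughout lie in the Lebesgue and Sobolev exponents.

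For part (i) I would first establish the one-dimensional analogues of Proposition \ref{zitinc} (control of $Z_I(t)$) and Proposition \ref{enerin} (energy increment): smoothing $u$ by $I$, applying the Strichartz estimate, and splitting $u=u_{\le1}+u_{1\le\cdot\le N}+u_{>N}$, the term $\|(\nabla Iu)f'(u)\|_{L^{4/3}_{t,x}}$ is controlled by $Z_I(t)\|u\|_{L^{2p}_{t,x}}^p$, and one estimates $\|u_{\le1}\|_{L^{2p}_{t,x}}^p$ by interpolating between the interaction Morawetz norm $\|u\|_{L^8_{t,x}}$ and $\|u_{\le1}\|_{L^\infty_x}\lesssim\|u_{\le1}\|_{L^{p+2}_x}$ (Bernstein), which forces $2p\ge8$ and feeds $\sup_sE(Iu(s))$ into the estimate exactly as in \eqref{equ3.1d}; the commutator pieces and the term $\|(Iu)|u|^p-I(|u|^pu)\|_{L^{4/3}_{t,x}}$ are handled as in \eqref{equ3.13} via Lemma \ref{commutor} and \eqref{fraclsfz}. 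The one-dimensional analogue of the interpolation \eqref{restricti} --- which now requires the interaction Morawetz exponent $8$ to sit below the Hölder exponent $4q$, with $q$ dictated by the fractional-derivative loss $\nu=2s-s_c-1$ --- is exactly what sharpens the restriction to $p\ge\tfrac{17}{3}$. I expect this exponent bookkeeping to be the main obstacle: one must pin down $q$, check that the relevant Strichartz pairs are one-dimensional admissible with finite spatial exponent, and verify that $p\ge\tfrac{17}{3}$ is precisely the condition making every interpolation legitimate --- the exact analogue of the remark following Theorem \ref{theorem}, with the weaker $|t|^{-1/2}$ decay forcing the larger power.

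Combining the two propositions by the bootstrap of Proposition \ref{almost} yields the almost conservation law $E(Iu)(t)=E(Iu(t_0))+O(N^{s_c-s})$ on any slab with $\|u\|_{L^8_{t,x}}\le\eta$. Following Proposition \ref{promain}, I would then rescale $u\mapsto u^\la=\la^{-2/p}u(t/\la^2,x/\la)$, use \eqref{equ2.3} and Sobolev embedding to arrange $E(Iu_0^\la)\ll1$ for $\la$ large depending on $N$ and $\|u_0\|_{H^s}$, and bootstrap $\|u^\la\|_{L^8_{t,x}}$: the $L^8$ interaction Morawetz estimate together with the interpolations of $\dot H^\frac12$ between $L^2$, $\dot H^1$ (for $P_{\le N}u^\la$, where $I$ is the identity) and between $L^2$, $\dot H^s$ (for $P_{>N}u^\la$, via \eqref{equ2.2}) reduces everything to $\sup_{[0,T]}\|Iu^\la\|_{\dot H^1}\le1$; partitioning $[0,T]$ into $L\sim\la^{7s_c}$ subintervals on which $\|u^\la\|_{L^8_{t,x}}\le\eta$ and iterating, the constraint $L\,N^{s_c-s}\ll1$ combined with $N^{1-s}\la^{s_c-s}\ll1$ reduces to $s^2+5s_cs+s_c^2-7s_c>0$, i.e.\ $s>s_1$. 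This gives $\|u\|_{L^8_{t,x}(\R\times\R)}\le C(\|u_0\|_{H^s})$ and $\sup_t\|u(t)\|_{H^s}\le C(\|u_0\|_{H^s})$, hence global well-posedness. Upgrading the global Morawetz bound to $\|\langle\nabla\rangle^su\|_{S^0(\R)}\le C(\|u_0\|_{H^s})$ by the continuity argument of Section 3.3 --- with $\|u\|_{L^8_{t,x}}$ in place of $\|u\|_{L^5_{t,x}}$, the one-dimensional Strichartz/Sobolev exponents, and the lower bound $s>\tfrac{1+s_c}{2}$ to absorb the derivative loss --- asymptotic completeness follows from Duhamel's formula as in Section 3.4; the wave operators are standard. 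This proves (i).

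Parts (ii) and (iii) are entirely parallel. For (ii) one repeats the above with the improved Morawetz estimate $\int\int|u|^6\lesssim T^\frac13(\cdots)$ in place of the $L^8$ bound --- which removes the $\tfrac{17}{3}$-constraint and leaves only $p\ge4$ --- so that now $L\sim T^\frac13\la^{\frac23+6s_c}/\eta^6$ on $[0,\la^2T_0]$; the combined requirements $L\,N^{s_c-s}\ll1$, $N^{1-s}\la^{s_c-s}\ll1$ and $T\le\la^2T_0$ reduce to $3(s-s_c)^2-2(1+9s_c)(1-s)>0$, i.e.\ $s>\tilde s_1$, and propagating the powers of $\la$ through $\|u(t)\|_{H^s}\lesssim\|u_0\|_2+\la^{s-s_c}(\la^{s_c}\|u_0\|_2+1)$ yields the stated growth $(1+T)^{\frac{1-s}{3(s-s_c)^2-2(1+9s_c)(1-s)}+}$, exactly as in the proof of Theorem \ref{theorem1.2}. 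For (iii) one scales adapted to the higher power $p_2$ (note $p_1<p_2$ gives $s_c^{(1)}<s_c^{(2)}$), so that the lower-order term $|u|^{p_1}u$ acquires a coefficient $\la^{-\beta}$ with $\beta>0$; since here $p_2$ need not be an even integer, both nonlinear terms are estimated by the rudimentary bounds of Lemma \ref{commutor} (rather than the multilinear analysis used for the $|u|^{2k}u$-term in Section 5), and the $Z_I$-control and energy-increment estimates are run for the sum $|u|^{p_1}u+|u|^{p_2}u$ with each power handled as in part (i). The bootstrap then closes under $s>\max\{\tfrac{1+s_c^{(2)}}{2},\tfrac{p_2}{2(p_2+1)},s_2\}$, with $s_2$ the positive root of $s^2+5s_c^{(2)}s+(s_c^{(2)})^2-7s_c^{(2)}=0$, and scattering follows as in (i). Throughout, the only substantive difference from the two-dimensional argument is the exponent arithmetic forced by the $|t|^{-1/2}$ decay, which raises the thresholds to $\tfrac{17}{3}$ and $4$.
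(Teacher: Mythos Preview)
Your proposal is correct and follows exactly the approach the paper itself indicates: the appendix states only that ``the proof is the same as in the case dimension two'' with the $L^8_{t,x}$ and improved $L^6_{t,x}$ Morawetz estimates substituted, and your outline fleshes this out faithfully, in particular recovering the correct subinterval count $L\sim\la^{7s_c}$ and the quadratics $s^2+5s_cs+s_c^2-7s_c$ and $3(s-s_c)^2-2(1+9s_c)(1-s)$. One caveat: you carry over the $L^{4/3}_{t,x}$ dual Strichartz norm and the factor $\|u\|_{L^{2p}_{t,x}}^p$ verbatim from the two-dimensional argument, but in one dimension the symmetric admissible pair is $(6,6)$ (so the dual is $L^{6/5}_{t,x}$) and the corresponding H\"older exponent for $|u|^p$ changes accordingly---this is precisely the ``exponent bookkeeping'' you flag as the main obstacle, and carrying it through carefully is what pins down $p\ge\tfrac{17}{3}$ (from the one-dimensional analogue of \eqref{restricti}) rather than the $p\ge4$ you obtain from $2p\ge8$.
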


\vskip 0.5cm

\textbf{Acknowledgements}  The authors would like to thank
 Piotr Biler,  Benjamin Dodson, Grzegorz Karch and
 the anonymous referee
 for    comments and suggestions.  This work is
supported in part by the National Natural Science
 Foundation of China under grant No. 11171033, No. 11231006, and No. 11371059.

\begin{center}

\end{center}

\end{document}